\documentclass[12pt, reqno]{amsart}
\usepackage[english]{babel}
\usepackage{amsthm}
\usepackage{amssymb}
\usepackage[abbrev]{amsrefs}
\usepackage{yhmath}
\usepackage[usenames]{color}
\usepackage{bm}
\usepackage{enumitem}
\usepackage{hyperref}
\AtBeginDocument{\def\MR#1{}}
\newtheorem{thm}{}[section]
\newtheorem{theorem}[thm]{Theorem}
\newtheorem{corollary}[thm]{Corollary}
\newtheorem{lemma}[thm]{Lemma}
\newtheorem{proposition}[thm]{Proposition}
\theoremstyle{definition}
\newtheorem{definition}[thm]{Definition}
\theoremstyle{remark}

\numberwithin{equation}{section}
\allowdisplaybreaks
\hyphenation{Le-besgue}
\hyphenation{Ra-de-ma-cher}

\renewcommand{\Re}{\operatorname{Re}}
\renewcommand{\Im}{\operatorname{Im}}
\DeclareMathOperator*{\Rad}{Rad}
\DeclareMathOperator*{\Ave}{Ave}

\newcommand{\enangle}[1]{\left\langle#1\right\rangle}
\newcommand{\abs}[1]{\left\lvert#1\right\rvert}
\newcommand{\norm}[1]{\left\lVert#1\right\rVert}
\newcommand{\enpar}[1]{\left(#1\right)}
\newcommand{\enbrace}[1]{\left\lbrace#1\right\rbrace}
\newcommand{\floor}[1]{\left\lfloor#1\right\rfloor}

\newcommand{\Fou}{\ensuremath{\mathcal{F}}}

\newcommand{\Jt}{\ensuremath{\mathcal{J}}}
\newcommand{\Ht}{\ensuremath{\mathcal{H}}}
\newcommand{\At}{\ensuremath{\mathcal{A}}}
\newcommand{\Dt}{\ensuremath{\mathcal{D}}}
\newcommand{\TT}{\ensuremath{\mathbb{T}}}

\newcommand{\HL}{\ensuremath{\mathring{H}}}
\newcommand{\LL}{\ensuremath{\mathcal{L}}}

\newcommand{\HH}{\ensuremath{\mathbb{H}}}
\newcommand{\NN}{\ensuremath{\mathbb{N}}}
\newcommand{\Nt}{\ensuremath{\mathcal{N}}}
\newcommand{\Mt}{\ensuremath{\mathcal{M}}}
\newcommand{\Ts}{\ensuremath{\mathcal{T}}}
\newcommand{\ZZ}{\ensuremath{\mathbb{Z}}}
\newcommand{\RR}{\ensuremath{\mathbb{R}}}
\newcommand{\FF}{\ensuremath{\mathbb{F}}}
\newcommand{\DD}{\ensuremath{\mathbb{D}}}
\newcommand{\CC}{\ensuremath{\mathbb{C}}}
\newcommand{\Ft}{\ensuremath{\mathcal{F}}}
\newcommand{\TL}{\ensuremath{\mathcal{T}}}
\newcommand{\St}{\ensuremath{\mathcal{S}}}

\newcommand{\XX}{\ensuremath{{\mathbb{X}}}}
\newcommand{\YY}{\ensuremath{{\mathbb{Y}}}}
\newcommand{\xx}{\ensuremath{\bm x}}
\newcommand{\yy}{\ensuremath{\bm y}}
\newcommand{\zz}{\ensuremath{\bm z}}
\newcommand{\ee}{\ensuremath{\bm e}}

\newcommand{\EE}{\ensuremath{\mathbb{E}}}
\newcommand{\Et}{\ensuremath{\mathcal{E}}}

\newcommand{\Ct}{\ensuremath{\mathcal{C}}}
\newcommand{\Pt}{\ensuremath{\mathcal{P}}}
\newcommand{\Gt}{\ensuremath{\mathcal{G}}}
\newcommand{\Bt}{\ensuremath{\mathcal{B}}}
\newcommand{\BB}{\ensuremath{\mathbb{B}}}
\newcommand{\XB}{\ensuremath{\mathcal{X}}}
\newcommand{\YB}{\ensuremath{\mathcal{Y}}}
\newcommand{\ZB}{\ensuremath{\mathcal{Z}}}
\newcommand{\Id}{\ensuremath{\mathrm{Id}}}

\newcommand{\supp}{\operatorname{supp}}

\title[Isomorphisms between vector-valued $H_{p}$-spaces for $0<p\le 1$]{Isomorphisms between vector-valued $\bm{H_{p}}$-spaces for $\bm{0<p\le 1}$ and uniqueness of unconditional structure}
\author[F. Albiac]{F. Albiac}
\address{Department of Mathematics, Statistics and Computer Sciences, and InaMat2\\ Universidad P\'ublica de Navarra\\
Pamplona 31006\\ Spain}
\email{fernando.albiac@unavarra.es}

\author[J. L. Ansorena]{J. L. Ansorena}
\address{Department of Mathematics and Computer Sciences\\
Universidad de La Rioja\\
Logro\~no 26004\\ Spain}
\email{joseluis.ansorena@unirioja.es}
\subjclass[2010]{46B15, 46B20, 46B42, 46B45, 46A16, 46A35, 46A40, 46A45}
\keywords{quasi-Banach lattice, uniqueness of unconditional basis, Hardy space}
\begin{document}
\begin{abstract}
The aim of this paper is twofold. On the one hand, we manage to identify Banach-valued Hardy spaces of analytic functions over the disc $\DD$ with other classes of Hardy spaces, thus complementing the existing literature on the subject. On the other hand, we develop new techniques that allow us to prove that certain Hilbert-valued atomic lattices have a unique unconditional basis, up to normalization, equivalence and permutation. Combining both lines of action we show that that $H_p(\DD,\ell_2)$ for $0<p<1$ has a unique atomic lattice structure. The proof of this result relies on the validity of some new lattice estimates for non-locally convex spaces which hold an independent interest.
\end{abstract}
\thanks{The authors acknowledge the support of the Spanish Ministry for Science and Innovation under Grant PID2022-138342NB-I00 for \emph{Functional Analysis Techniques in Approximation Theory and Applications} (TAFPAA)}
\maketitle
\section{Introduction}\noindent
Most classical spaces underlying the research in analysis and operator theory, such as the sequence spaces $\ell_{p}$, the function spaces $L_{p}$, the Hardy spaces $H_{p}$, or the Lorentz sequence spaces $d(w,p)$, are defined in terms of a parameter which serves to identify the members inside each family. Non-locally convex spaces show up very naturally in different areas when there is no reason to stop when the parameter $p$ is $1$.

Historically, much of the early impetus do develop a theory of non-locally convex spaces came from work in the Hardy spaces $H_{p}$. Because there is no clear intrinsic reason to restrict attention to the case $p\ge 1$, the theory of the spaces $H_{p}$ for $p<1$ has been studied in some detail, and so these provide some of the most interesting and best understood examples of non-locally convex spaces.

The researchers interested in Hardy spaces divide in two almost disjoint groups; more than half of them are solely interested in obtaining or improving bounds for operators whose domain or target space is $H_p$, while the other ones have a preference for structure theory. This article is devoted to the latter aspect, and to be more specific to the study of the uniqueness of unconditional structure of the non-locally convex vector-valued Hardy spaces.

Members of the Hardy class $H_{p}$ for $0<p<\infty$ may appear under different forms, to the extent that sometimes it can be difficult to realize they are the same space under disguise. The most classical $H_{p}$ space is the space $H_{p}(\DD)$ of all analytic functions $f$ on the unit disc in the complex plane such that
\[
\norm{ f}_{p}=\sup\limits_{0<r<1}\enpar{\frac{1}{2\pi}\int_{-\pi}^{\pi}\abs{f(re^{i\theta})}^{p}\,d\theta}^{1/p}<\infty.
\]
For $0<p\le 1$, $H_{p}(\TT)$ is the space of functions $f$ on $\TT$ such that $f=\sum_{n=1}^{\infty}\lambda_{n}\, a_{n}$ with $a_{n}$ $p$-atoms and $N:=\sum_{n=1}^{\infty}\abs{\lambda_{n}}^{p}<\infty$, equipped with the $p$-norm that provides the infimum value of $N^{1/p}$. It is known that any function in $H_p(\DD)$ has boundary values, and the space consisting of the real part of such boundary functions is $H_p(\TT)$. Besides, the norm of $f$ in $H_p(\TT)$ is equivalent to
\[
\enpar{\int_{-\pi}^\pi \sup_{0<r<1} \abs{P_r*f}^p}^{1/p},
\]
that is, the $L_p$-norm of the maximal function built via the Poisson kernel $(P_r)_{0<r<1}$.

Another class of $H_{p}$ spaces is that of martingale $H_{p}$ spaces. We will limit our attention to dyadic martingales only. A function $f$ belongs to $H_{p}^{\delta}([0,1))$, the dyadic $H_{p}$ space, if its maximal function $f^*$ satisfies
\[
\enpar{\int_{0}^{1} \enpar{f^{\ast}(t)}^{p}\,dt}^{1/p}<\infty.
\]
It is known that a function $f\in H_{p}^{\delta}([0,1))$, $0<p<\infty$, can be represented as a series $f=\sum_{n=1}^{\infty}\lambda_{n}\, h_{n}$, where $(h_{n})_{n=0}^{\infty}$ denotes the Haar system, and the norm of $f$ in $H_{p}^{\delta}([0,1))$ is comparable to
\[
\enpar{\int_{0}^{1}\left(\sum_{n=0}^{\infty} \abs{\lambda_{n}}^{2}\abs{h_{n}}^{2}\right)^{p/2}}^{1/p}.
\]
Maurey \cite{Maurey1980} proved that $H_{1}(\DD)$ and $H_{1}^{\delta}([0,1))$ are isomorphic. Since $H_{1}^{\delta}([0,1))$ possesses the Haar system as an unconditional basis it follows that $H_{1}(\DD)$ has an unconditional basis too, which settled an important open problem in the theory. However, the methods in Maurey's paper do not provide an explicit unconditional basis of $H_{1}(\DD)$. Four years later, Wojtaszczyk gave in \cite{Woj1984} a construction of an unconditional basis in $H_{p}(\DD)$ for $0<p\le 1$ and used the bases he constructed to give explicit isomorphisms between various $H_{p}$ spaces; most notably he proved that $H_{p}(\DD)$ is isomorphic to the space $H_{p}^{\delta}([0,1))$.

All the above Hardy spaces and norms have vector-valued counterparts but to extend to the Banach-valued setting the identification between any two of those spaces or the equivalence of norms is far from trivial. In \cite{Blasco1988} Blasco dealt with the problem of finding conditions on a Banach space $\XX$ so that one can identify the Hardy space $H_{p}( \DD,\XX)$ of $\XX$-valued, analytic functions on the disc $\DD$, with some space of boundary-values functions. He showed that, via the Poisson integral, $H_{p}( \DD,\XX)$ can be identified with a certain space of measures on the circle $\TT$ for $1\le p <\infty$. Under the assumption that $\XX$ has the analytic Radon-Nikod\'ym property (ARNP for short) such measures are representable as functions. The corresponding result for the Hardy spaces of harmonic functions required the Radon-Nikod\'ym property (RNP for short) instead of the ARNP \cite{BGC1987}. Moreover, in \cite{Blasco1988} it is shown that for $0<p<\infty$, the spaces $H_{p}(\DD, \XX)$ can be characterised using the maximal function if and only if $\XX$ is an unconditional martingale difference space (UMD space for short).

Continuing in this spirit, our first task in this note will be to identify the various classes of vector-valued non-locally convex Hardy spaces that appear in the literature depending on the context. This equivalence, combined with some specific new techniques that we develop to show the uniqueness of unconditional basis in vector-valued quasi-Banach spaces, will allow us to establish our main result on uniqueness of unconditional structure for Hardy spaces.

\begin{theorem}\label{thm:main}
Suppose $0<p<1$. Then the vector-valued Hardy space $H_p(\DD, \ell_{2})$ has a unique normalized unconditional basis, up to equivalence and permutation.
\end{theorem}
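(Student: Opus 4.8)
The plan is to transfer the problem, via the identifications set up above, to a concrete quasi-Banach lattice and then to invoke the uniqueness machinery for Hilbert-valued atomic lattices. First I would realize $H_p(\DD,\ell_2)$ as a sequence lattice. Combining the Poisson-maximal description with the vector-valued identifications developed in the first part of the paper, which extend Wojtaszczyk's scalar isomorphism $H_p(\DD)\cong H_p^\delta([0,1))$ to the $\ell_2$-valued setting, the space $H_p(\DD,\ell_2)$ is isomorphic to the dyadic martingale space $H_p^\delta([0,1),\ell_2)$. Expanding an $\ell_2$-valued function $f=\sum_n\lambda_n h_n$ in the Haar system and writing $\lambda_n=(\lambda_{n,k})_k$, the square-function norm turns this into the lattice of scalar arrays $(a_{n,k})_{n,k}$ with
\[
\norm{(a_{n,k})}=\enpar{\int_0^1\enpar{\sum_{n,k}\abs{a_{n,k}}^2\,\abs{h_n(t)}^2}^{p/2}dt}^{1/p}.
\]
Here the canonical system $\mathcal B=(h_n\otimes e_k)_{n,k}$ is a normalized $1$-unconditional basis: at each fixed Haar index $n$ the coordinates $k$ span a complemented isometric copy of $\ell_2$, while the different indices $n$ are glued together in the non-locally convex $H_p$ manner dictated (after $L_\infty$-normalization of the Haar functions) by the chain of dyadic intervals through each point $t$. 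This is precisely a Hilbert-valued atomic lattice of the type our machinery is built to handle, and the target statement becomes: every normalized unconditional basis of this lattice is equivalent, after a permutation, to $\mathcal B$.

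The uniqueness argument then proceeds in two stages. In the first and decisive stage I would show that every normalized unconditional basis of the lattice is equivalent to a normalized unconditional sequence whose vectors are disjointly supported with respect to the canonical lattice order. This reduction is what the new lattice estimates for non-locally convex spaces are designed to deliver: they furnish the two-sided bounds that let one replace an arbitrary unconditional basic sequence by its disjointification at the cost of a uniformly bounded perturbation, simultaneously controlling the $2$-convex Hilbertian fibres and the $p$-convex $H_p$ scaffolding. In the second stage I would analyze the disjoint normalized sequences directly: using the self-similarity of the space, which makes it isomorphic to its square and permits Pe\l czy\'nski-type decomposition arguments, together with the rigidity of disjoint vectors in the tree lattice, I would show that such a sequence splits into blocks equivalent to canonical Euclidean blocks and to canonical atoms, and then reassemble the equivalence through a Cantor--Bernstein argument so that it is implemented by a bijection of the index set.

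The crux, and the step I expect to be the main obstacle, is the disjointification of the first stage, where the non-local convexity and the Hilbertian fibres must be controlled at once. Because the space genuinely contains complemented copies of $\ell_2$, an arbitrary unconditional basis enjoys considerable room to mix Hardy-type and Euclidean behaviour, and may have vectors whose lattice supports overlap substantially across distinct atoms of the tree; one must nevertheless show that such a basis can be replaced, up to a bounded perturbation, by a disjointly supported one, preventing the $\ell_2$-fibres from \emph{smearing} across different atoms. This is exactly what the new lattice estimates are meant to secure: they quantify the interaction between the $2$-convex fibre direction and the $p$-convex direction and yield the inequalities needed to carry out the replacement without losing more than a uniform constant. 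Establishing these estimates, and checking that they pass through the isomorphism of the first step, is the technically demanding heart of the argument; granting them, permutative equivalence with $\mathcal B$ follows by assembling the disjoint blocks as described.
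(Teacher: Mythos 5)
Your overall architecture is right at the top and bottom levels: the reduction of $H_p(\DD,\ell_2)$ to the concrete lattice $\TL_{p,2}(\ell_2)$ is exactly the paper's Theorem~\ref{thm:HpDTL} (though the paper reaches it through Rademacher averages, $H_p(\TT,\ell_2)\simeq\Rad(H_p(\TT))$ and $\TL_{p,2}(\ell_2)\simeq\Rad(\TL_{p,2})$, rather than by directly vectorizing Wojtaszczyk's isomorphism), and your closing moves --- self-similarity of the lattice, square-stability, and a Cantor--Bernstein assembly --- correspond to Proposition~\ref{prop:TLSS}, Proposition~\ref{prop:DSSquare} and Lemma~\ref{lem:PUvsUTAP}. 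But your ``decisive first stage'' contains a genuine gap. You claim that every normalized unconditional basis of the lattice can be replaced, up to bounded perturbation, by a \emph{disjointly supported} sequence, and that the new lattice estimates deliver this. They do not, and the paper never proves such a statement; proving it directly would be essentially the whole theorem. What the paper actually shows (Theorem~\ref{thm:Ll2PU}) is weaker and different: using the duality pairing $f_{j,m}=P'_j(S(\yy_m^*))(P_j(\yy_m))$, which sums to $1$ over $j$ while $(f_{j,m})_j$ has controlled $\LL$-norm, each basis vector is localized to a \emph{single} Hilbert fibre $\ell_2^{\varphi(j(m))}$ carrying a coordinate of size $\ge\delta$; the vector is then replaced by its projection onto that one fibre. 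Vectors sent to the same fibre are \emph{not} disjoint --- they overlap freely inside the Hilbertian block --- and the equivalence with the unit vector system of $(\oplus_j\ell_2^{\psi(j)})_\LL$, $\psi\le K\varphi$, is obtained not by disjointification but through the square-function latticefication $\LL_\ZB$ and Theorem~\ref{thm:LTImproved}, whose engine is Kalton's Hilbert-valued extension theorem for operators into $L$-convex lattices (Theorem~\ref{thm:KaltonVectorizes}). This yields \emph{power universality} of the canonical basis, which is the structural statement that feeds the power-cancelling and Schr\"oder--Bernstein principles in Lemma~\ref{lem:PUvsUTAP}.

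A second, related defect: nowhere in your sketch can one see where $0<p<1$ is used, and it must be visible, because the theorem is \emph{false} for $p=1$ --- the paper exhibits, via the Dilworth--Hoffmann--Kutzarova basis, an unconditional basis of a space isomorphic to $\TL_{1,2}(\ell_2)$ not equivalent to the canonical one. In the paper the restriction $p<1$ enters precisely through the \emph{strong absoluteness} of $\TL_{p,2}$ (the quantitative separation of $\norm{\cdot}_\LL$ from $\norm{\cdot}_1$), which is what forces the existence of the large coordinate $\abs{f_{j(m),m}}\ge\delta$; generic ``interaction estimates between the $2$-convex fibres and the $p$-convex scaffolding'' hold in forms valid at $p=1$ as well, so they cannot by themselves carry the argument. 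Finally, your second-stage plan of splitting a disjoint basis into canonical Euclidean blocks and canonical atoms is essentially the Edelstein--Wojtaszczyk splitting used for $H_p\oplus\ell_2$ in \cite{AAW2022}, which the paper explicitly states is not fitted for vector-valued spaces --- avoiding that splitting is exactly why the power-universality route was invented. To repair your proposal you would need to (i) abandon full disjointification in favour of fibre localization plus latticefication, (ii) isolate strong absoluteness as the hypothesis producing the localization, and (iii) replace the Euclidean/anti-Euclidean splitting by the power-universality argument together with the power-cancelling and Schr\"oder--Bernstein principles for unconditional bases.
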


To put this result in context, we recall that $c_{0}$, $\ell_{1}$, and $\ell_{2}$ are the only Banach spaces which have a unique normalized unconditional basis up to equivalence. This means that if $\XX$ is one of those spaces and if $(\xx_{n})_{n=1}^{\infty}$ and $(\yy_{n})_{n=1}^{\infty}$ are two normalized unconditional bases of $\XX$ then the mapping $T$ defined by $T(\xx_{n})=\yy_{n}$ for all $n\in\NN$ extends to a linear automorphism of $\XX$. Any other Banach space with an unconditional basis fails to have this property. However, in the wider class of quasi-Banach spaces we find many other examples of spaces with that property, including the spaces $\ell_{p}$ for $0<p<1$, some Orlicz sequence spaces, and some Lorentz sequence spaces, to name but a few.

If a Banach space has, up to equivalence, a unique normalized unconditional basis then the basis must be equivalent to all its permutations and so it has to be symmetric. This remark, as well as the fact that an unconditional basis by its very definition does not have to come with a prescribed order, shows that for unconditional bases which are not symmetric, the most natural equivalence property is that of equivalence up to permutations. It is therefore interesting to discover if a space $\XX$ with a unconditional basis $(\xx_{n})_{n\in\Nt}$ has a uniqueness unconditional basis in this sense; that is, assuming that $(\xx_{n})_{n\in\Nt}$ is normalized, whenever $(\yy_m)_{m\in\Mt}$ is another normalized unconditional basis of $\XX$ there is a bijection $\pi\colon\Nt\to\Mt$ so that $(\yy_{\pi(n)})_{n\in\Nt}$ is equivalent to $(\xx_{n})_{n\in\Nt}$.

Bourgain et al.\ studied in \cite{BCLT1985} the problem of the uniqueness of unconditional basis in the vector-valued versions of the three Banach spaces with a unique unconditional basis. They showed that the spaces $c_{0}(\ell_{1})$, $\ell_{1}(c_{0})$, $c_{0}(\ell_{2})$, and $\ell_{1}(\ell_{2})$ have the property, whereas $\ell_{2}(\ell_{1})$ and $\ell_{2}(c_{0})$ do not. Many of the questions the authors formulated in their 1985 \emph{Memoir} remain open as of today. They conjectured that if a Banach space $\XX$ has a unique unconditional basis then so does the iterated copy of $\XX$ in the sense of one of the spaces $c_{0}$, $\ell_{1}$, and $\ell_{2}$. This conjecture was disproved in the general case in 1999 by Casazza and Kalton, who showed that Tsirelson's space $\Ts$ has a unique unconditional basis while $c_{0}(\Ts)$ does not (\cite{CasKal1999}). We emphasize that the anti-Euclidean nature of the spaces involved (that is, the lack of uniformly complemented copies of finite-dimensional Hilbert spaces) played a key role in the achievement of those results.

Casazza and Kalton's work gave thus continuity to a research topic that was central in Banach space theory in the 1960's and 1970's, but that was interrupted after the \emph{Memoir}. Perhaps the researchers felt discouraged to put effort into a subject that required the discovery of novel tools in order to make headway, with little hope for attaining a satisfactory classification of the Banach spaces with a unique unconditional basis.

At the same time, the positive results on uniqueness of unconditional basis obtained in the context of non-locally convex quasi-Banach spaces motivated further study with a number of authors contributing to the development of a coherent theory. An important advance was the paper \cite{KLW1990} by Kalton et al.\, followed by the work of Ler\'anoz \cite{Leranoz1992}, who proved that $c_{0}(\ell_{p})$ has a unique unconditional basis for all $0<p<1$. Subsequently, it was proved that the spaces $\ell_{p}(\ell_{2})$, $\ell_{p}(\ell_{1})$, and $\ell_{1}(\ell_{p})$ for all $0<p<1$ (\cites{AlbiacLeranoz2002, AKL2004}) also do.

For Hardy spaces, Wojtaszczyk proved that the Haar system is the unique unconditional basis of $H_{p}^{\delta}([0,1))$ for $0<p<1$. However, the situation is rather different when $p=1$. Indeed, to see (from a more modern approach) that $H_{1}(\DD)$ fails to have a unique unconditional basis let us first notice that the Haar system is a democratic basis in $H_{1}^{\delta}([0,1))$. On the other hand, the space $\ell_{2}$ is complemented in $H_{1}(\DD)$, and so the direct sum of the canonical $\ell_{2}$-basis and the Haar basis cannot be equivalent to a permutation of the Haar system because the latter is not democratic. Hence the question naturally arises of what can be said about the uniqueness of unconditional structure of $\XX$-valued Hardy spaces $H_{p}(\XX)$ for $p<1$, an aspect of the structure of these spaces which, as of today, is missing from the literature and which we tackle in Theorem~\ref{thm:main} in the particular case that $\XX=\ell_2$.

Despite the fact that $H_{p}(\ell_2)$ is the more natural vector-valued Hardy space, it has to be observed that is a convoluted breed of an Euclidean component (a separable Hilbert space) with an anti-Euclidean component that contains $\ell_2$ (the scalar-valued Hardy space $H_p$). The authors of \cite{AAW2022} succeeded in splitting any unconditional basis of $H_p\oplus\ell_2$ into an Euclidean part and an anti-Euclidean one, thereby proving that $H_p\oplus\ell_2$ has a unique unconditional basis. However this splitting technique, whose origin dates back to \cite{EdWo1976}, is not fitted for vector-valued spaces.To make things work requires a wide repertoire of techniques that includes lattice estimates in the lack of local convexity (see Section~\ref{sect:QBLE}), constructive function theory, martingale theory, harmonic analysis (see Section~\ref{sect:VVHE}), and unconditional basis theory (see Sections~\ref{sect:UBQB} and \ref{sect:AAMethod}). In Section~\ref{sect:HVLUTAP} we develop specific methods for studying unconditional bases of Hilbert-valued spaces which, combined with the previously developed machinery, will lead us to our goal. Throughout this article we use standard facts and notation in Banach space theory as can be found in \cites{AlbiacKalton2016}. Other more specialized terminology will be introduced in the context where it is needed.

\section{Quasi-Banach lattice estimates}\label{sect:QBLE}\noindent
Let $\XX$ be a vector space over the real or complex field $\mathbb F$. A {\it quasi-norm} on $\XX$ is a map $\Vert \cdot
\Vert \colon \XX\to [0,+\infty)$ satisfying
\begin{enumerate}[label=(\roman*), leftmargin=*, widest=iii]
\item $\norm{ f } >0$ if $f\not=0$;

\item $\norm{ \lambda \, f} = \abs{\lambda} \norm{ f }$ for all $f\in\XX$ and all $\lambda\in\FF$; and

\item\label{it:QN} $\norm{ f_{1}+f_{2} } \le C\enpar{\norm{ f_{1} } +\norm{ f_{2}} }$ for all $f_{1},
f_{2}\in\XX$,
\end{enumerate}
where $C\ge 1$ is a constant independent of $f_{1}$ and $f_{2}$. The optimal constant $C$ in \ref{it:QN} is called the \emph{modulus of concavity} of $\XX$. If $C=1$, $\norm{ \cdot}$ is a norm. A quasi-norm defines a metrizable vector topology on $\XX$ for which the family
\[
\enbrace{ f\in \XX\colon\norm{f} <\varepsilon}, \quad \varepsilon>0,
\]
is a base of neighbourhoods of zero. If such topology is complete then we say that $(\XX,\Vert \cdot\Vert)$ is a \emph{quasi-Banach space}.

A quasi-Banach space $\XX$ is said to be \emph{locally $r$-convex}, $0<r<\infty$, if there is a constant $C\ge 1$ such that
\begin{equation}\label{eq:pconvex}
\frac{1}{C} \norm{ \sum_{j\in J} f_j}\le N_r(\varphi):= \enpar{\sum_{j\in J} \norm{f_j}^r}^{1/r}.
\end{equation}
for every finite family $\varphi=(f_j)_{j\in J}$ in $\XX$. If a nontrivial quasi-Banach space is $r$-convex, then $r\le 1$. Conversely, the Aoki-Rolewicz theorem \cites{Aoki1942,Rolewicz1957} states that any quasi-Banach space is locally $r$-convex for some $r\in(0,1]$. Any $r$-convex quasi-Banach $\XX$ becomes a $r$-Banach space under a suitable re-norming, i.e., $\XX$ can be endowed with an equivalent quasi-norm fulfilling \eqref{eq:pconvex} with $C=1$. Such quasi-norms are called $r$-norms.

A partially ordered quasi-Banach space $(\LL,\norm{\cdot})$ over the reals is called a \emph{quasi-Banach lattice} provided
\begin{enumerate}[label=(\roman*), leftmargin=*, widest=iii]
\item $x+z\le y+z$ whenever $x\le y$, for every $x$, $y$ and $z \in \LL$;
\item $0\le \lambda\, x$ for every $0\le x$ in $\LL$ and every $\lambda\in [0,\infty)$;
\item any two elements $x$, $y$ in $\LL$, have a least upper bound $x\vee y$; and
\item if we define the \emph{absolute value} of an $x\in \LL$ as $\abs{x} = x\vee (-x)$ then $\norm{ x}\le \norm{ y}$ whenever $\abs{x}\le \abs{y}$.
\end{enumerate}

The lattice structure on $\LL$ induces a different notion of convexity. To formulate it we need to use the \emph{lattice functional calculus}, which is described e.g. in \cite{LinTza1979}*{Theorem 1.d.1}. We emphasize that this functional calculus works in the general framework of quasi-Banach lattices, so given a finite family $\varphi=(f_j)_{i\in J}$ in a quasi-Banach lattice $\LL$, we can define the vector
\[
v_r(\varphi):=\enpar{\sum_{j\in J} \abs{f_j}^r}^{1/r} \in \LL.
\]
The lattice functional calculus allows to complexify any real quasi-Banach lattice.

Most quasi-Banach lattices arise from function quasi-norms. We refer the reader to \cite{AnsorenaBello2022} for the definition and background on this notion. To set the terminology we recall that given a measure space $(\Omega,\Sigma,\mu)$ and a function quasi-norm $\rho\colon L_0^+(\mu)\to [0,\infty]$, we define the quasi-Banach lattice associated with $\rho$ as
\[
\LL_\rho=\enbrace{ f \in L_0(\mu) \colon \rho(\abs{f})<\infty}.
\]

A K\"othe space will be a quasi-Banach lattice constructed from a function quasi-norm with the Fatou property, i.e., a function quasi-norm $\rho$ such that $\rho(\sup_n f_n)=\sup_n \rho(f_n)$ for any non-decreasing sequence $(f_n)_{n=1}^\infty$ in $L_0^+(\mu)$.

Given $0<r<\infty$, the vector $v_r(\varphi)$ associated with a finite family $\varphi=(f_j)_{j\in J}$ in the K\"othe space $\LL_\rho$
takes the easy form
\[
v_r(\varphi)(\omega)=\enpar{\sum_{j\in J} \abs{f_j(\omega)}^r}^{1/r}, \quad \omega\in\Omega.
\]

We will consider vector-valued versions of these spaces. Given a quasi-Banach space $\XX$ endowed with a quasi-norm $\norm{\cdot}_\XX$, we let $L_0(\mu,\XX)$ be the space consisting of all strongly measurable functions from $\Omega$ to $\XX$, where as usual, we identify functions that coincide almost everywhere. We set
\[
\LL_\rho(\XX)=\enbrace{ f \in L_0(\mu,\XX) \colon \rho(\norm{f}_\XX)<\infty}.
\]

We say that $\LL$ is \emph{lattice $r$-convex} if there is a constant $C$ such that $\norm{ v_r(\varphi) } \le C N_r(\varphi)$ for any finite family $\varphi=(f_j)_{j\in J}$ in $\LL$, and we denote by $M^{(p)}(\LL)$ the smallest such a constant $C$ for which this inequality holds. If $\LL$ is a $r$-convex quasi-Banach lattice, then $\LL$ is a $\min\{r,1\}$-convex quasi-Banach space. The converse does not hold; indeed there are quasi-Banach lattices that are not $r$-convex for any $r>0$. Kalton showed in \cite{Kalton1984b} that a quasi-Banach lattice $\LL$ is \emph{lattice $r$-convex} for some $0<r<\infty$ if and only if it is \emph{$L$-convex}, i.e., there is $0<\varepsilon<1$, which we call an $L$-convexity constant for $\LL$, so that
\begin{equation}\label{eqreflat1.1}
\varepsilon \norm{ f } \le \max_{j\in J} \norm{ f_j}
\end{equation}
whenever $f\in\LL$ and the finite family $(f_j)_{j\in J}$ in $\LL$ satisfy
\begin{itemize}
\item $(1-\varepsilon) \abs{J} f\ge \sum_{j\in J} f_j$, and
\item $0\le f_j\le f$ for every $j\in J$.
\end{itemize}
Quantitatively, if $\LL$ is $r$-convex, there is an $L$-convexity constant for $\LL$ that only depends on $r$ and $M^{(p)}(\LL)$; and, the other way around, if $\LL$ is $L$-convex with $L$-convexity constant $\varepsilon$, then there are $r=r(\varepsilon)$ and $C=C(\varepsilon)$ such that $\LL$ is lattice $r$-convex with $M^{(p)}(\LL)\le C$. Note that lattice $1$-convexity is equivalent to $1$-convexity, i.e., to local convexity.

Concavity is, in a sense, the dual property of convexity. We say that a quasi-Banach lattice $\LL$ is \emph{lattice $r$-concave}, $0<r<\infty$, if there is a constant $C$ such that
\begin{equation}\label{eqreflat1.2}
N_r(\varphi)\le C \norm{v_r(\varphi)}
\end{equation}
for any finite family $\varphi$ in $\LL$. In this case we will denote by $M_{(r)}(\LL)$ the smallest constant $C$. We say that $\LL$ is \emph{$L$-concave} if it is $r$-concave for some $r<\infty$.

If we only require \eqref{eqreflat1.1} (resp., \eqref{eqreflat1.2}) to hold in the case when $\varphi=(f_j)_{j\in J}$ is a disjointly supported family, so that
\[
v_r(\varphi)=\abs{\sum_{j\in J} f_j},
\]
we say that $\LL$ satisfies an upper (resp., lower) $r$-estimate. The abovementioned Theorem 2.2 from \cite{Kalton1984b} also gives the following.

\begin{theorem}[see \cite{Kalton1984b}]\label{thm:ConvexInterval}
If an $L$-convex quasi-Banach lattice satisfies an upper $r$-estimate for some $0<r<\infty$, then $\LL$ is lattice $s$-convex for all $0<s<r$. In particular, if $\LL$ is lattice $r$-convex, then it is lattice $s$-convex for all $0<s<r$.
\end{theorem}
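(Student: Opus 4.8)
The plan is to strip the statement down, by means of a convexification (power) device built from the lattice functional calculus, to the corresponding fact for \emph{Banach} lattices, where it is classical and reachable through duality. Before that, let me dispose of the \emph{in particular} clause: if $\LL$ is lattice $r$-convex then it trivially satisfies an upper $r$-estimate, since an upper $r$-estimate is merely the defining inequality of lattice $r$-convexity restricted to disjointly supported families, and $\LL$ is $L$-convex by Kalton's equivalence recalled above. Hence the second assertion is the special case $r=$ (convexity exponent) of the first, and I would concentrate on the first.

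Fix $0<s<r$. The tool I would use is the \emph{$a$-convexification} $\LL^{(a)}$, $0<a<\infty$, obtained from $\LL$ through the lattice functional calculus by setting $\norm{f}_{\LL^{(a)}}=\norm{\abs{f}^{1/a}}_{\LL}^{a}$; concretely, on a K\"othe space this replaces the function quasi-norm $\rho$ by $f\mapsto\rho(\abs{f}^{1/a})^{a}$. A direct computation with the functional calculus shows that passing to $\LL^{(a)}$ rescales every exponent by the factor $a$: namely $\LL^{(a)}$ is lattice $t$-convex if and only if $\LL$ is lattice $at$-convex, and likewise $\LL^{(a)}$ satisfies an upper $t$-estimate if and only if $\LL$ satisfies an upper $at$-estimate; moreover $L$-convexity is preserved, because it amounts to lattice $q$-convexity for \emph{some} $q>0$. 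Applying this with $a=s$ and $t=1$, and recalling that lattice $1$-convexity is exactly local convexity, I see that the desired lattice $s$-convexity of $\LL$ is equivalent to the local convexity of $\Nt:=\LL^{(s)}$. By the same rescaling, $\Nt$ is still $L$-convex and satisfies an upper $(r/s)$-estimate, and crucially the exponent $P:=r/s$ satisfies $P>1$. Thus everything reduces to the core assertion: \emph{an $L$-convex quasi-Banach lattice carrying an upper $P$-estimate for some $P>1$ is locally convex.}

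To settle this core assertion I would invoke Kalton's theorem once more: being $L$-convex, $\Nt$ is lattice $\alpha$-convex for some $\alpha>0$. If $\alpha\ge 1$, then $\Nt$ is a $\min\{\alpha,1\}=1$-convex quasi-Banach space, i.e.\ locally convex, and there is nothing left to do. If $\alpha<1$, I would convexify a second time: $Y:=\Nt^{(\alpha)}$ is lattice $1$-convex, hence, after an equivalent renorming, a genuine Banach lattice, and by the rescaling it satisfies an upper $(P/\alpha)$-estimate. At this point I would appeal to the classical Lindenstrauss--Tzafriri theory \cite{LinTza1979}: a Banach lattice with an upper $(P/\alpha)$-estimate is lattice $Q$-convex for every $Q<P/\alpha$. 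Since $P>1$ forces $1/\alpha<P/\alpha$, I may take $Q=1/\alpha$, so $Y$ is lattice $(1/\alpha)$-convex; undoing this convexification (the case $t=1/\alpha$ of the rescaling applied to $Y=\Nt^{(\alpha)}$) says precisely that $\Nt$ is lattice $1$-convex, i.e.\ locally convex. This closes the core assertion and hence the theorem.

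The main obstacle is exactly the Banach-lattice input in the last step, and that is where the genuine work resides. In the locally convex world it is accessible through duality: an upper $P$-estimate on a Banach lattice corresponds to a lower $P'$-estimate on its dual, a lower estimate forces concavity with a strict loss of exponent, and concavity of the dual transfers back to convexity of the space under conjugation; letting the dual exponent tend to $P'$ yields lattice $Q$-convexity for all $Q<P$. The entire purpose of the convexification device is to transport the original non-locally-convex problem into this setting, thereby circumventing the failure of duality for $p<1$ that blocks any direct attack. I would also note that, as an alternative to this route, the statement is contained in Kalton's Theorem~2.2 of \cite{Kalton1984b} and may simply be quoted from there.
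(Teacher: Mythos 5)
Your proposal is correct and takes essentially the paper's route: the paper proves the companion Theorem~\ref{thm:ConcaveInterval} by precisely this convexification scheme (pass to a lattice $1$-convex power via the lattice $p$-convexity supplied by $L$-convexity, invoke \cite{LinTza1979}*{Theorem 1.f.7} for Banach lattices, then de-convexify) and explicitly remarks that the same argument proves Theorem~\ref{thm:ConvexInterval}, which is otherwise simply quoted from \cite{Kalton1984b}*{Theorem 2.2} --- the citation route you also mention. Your only deviation is cosmetic: you convexify in two stages (first by $s$, then by $\alpha$) where the paper's single convexification $\LL^{(1/p)}$, with $p$ a lattice-convexity exponent of $\LL$, already reduces matters to the locally convex case.
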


There is a lattice-concave counterpart of Theorem~\ref{thm:ConvexInterval} that can be proved using convexifications. Given a quasi-Banach lattice $\LL$ and $0<p<\infty$, we can define its $p$-convexification $\LL^{(p)}$ using the procedure described in \cite{LinTza1979}*{pp.\ 53--54}.

\begin{theorem}\label{thm:ConcaveInterval}
Suppose that a quasi-Banach lattice $\LL$ satisfies a lower $r$-estimate for some $0<r<\infty$. Then $\LL$ is $L$-convex and $s$-concave for all $s>r$.
\end{theorem}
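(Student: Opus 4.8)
The plan is to treat the two conclusions separately. The concavity statement I would obtain by a reduction to the locally convex setting via convexification, where Banach-lattice duality is available; the subtle point, which I expect to be the main obstacle and would establish first, is the bare assertion that a lower $r$-estimate forces $L$-convexity. I emphasize at the outset that the convexification device cannot be used for the latter, since the exponent by which one would need to convexify in order to reach local convexity is itself governed by the very convexity one is trying to produce.

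Granting for the moment that $\LL$ is $L$-convex, Kalton's equivalence makes $\LL$ lattice $t$-convex for some $t\in(0,1]$, and by Theorem~\ref{thm:ConvexInterval} I may shrink $t$ so that in addition $t\le r$; after the associated renorming I assume $\LL$ is lattice $t$-convex with constant $1$. I then pass to the $p$-convexification $Y:=\LL^{(p)}$ with $p=1/t$. Since convexification multiplies the convexity exponent by $p$, the lattice $Y$ is lattice $1$-convex, i.e.\ a genuine Banach lattice; and since it multiplies the exponent of a disjoint estimate by $p$ as well, the lower $r$-estimate of $\LL$ becomes a lower $(r/t)$-estimate of $Y$, where now $r/t\ge1$. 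The whole problem is thereby transported to a Banach lattice.

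In $Y$ the argument is by duality. A lower $(r/t)$-estimate on a Banach lattice is equivalent to an upper $(r/t)'$-estimate on the dual lattice $Y^{*}$; as $Y^{*}$ is again a Banach lattice, hence $L$-convex, Theorem~\ref{thm:ConvexInterval} upgrades this to lattice $s'$-convexity of $Y^{*}$ for every $s'<(r/t)'$. Dualizing convexity of $Y^{*}$ back to concavity of $Y$ by the standard Banach-lattice duality between $q$-concavity of $Y$ and $q'$-convexity of $Y^{*}$, I obtain that $Y$ is $q$-concave for every $q>r/t$. Finally I undo the convexification: concavity exponents scale by the factor $p=1/t$, so $q$-concavity of $Y=\LL^{(p)}$ for all $q>r/t$ yields $s$-concavity of $\LL$ for all $s>r$, which is \eqref{eqreflat1.2}. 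The constant bookkeeping is routine, the only inputs being the behaviour of $v_r$, $N_r$ and of disjoint estimates under convexification as recalled in \cite{LinTza1979}.

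The crux is thus the first claim. I would verify Kalton's criterion \eqref{eqreflat1.1} directly, by contradiction. If $\LL$ is not $L$-convex, then for arbitrarily small $\varepsilon$ and correspondingly large $n$ there are a norm-one $f\ge0$ and positive $f_{1},\dots,f_{n}\le f$ with $\frac1n\sum_j f_j\ge(1-\varepsilon)f$ and $\max_j\norm{f_j}\le\varepsilon$; passing to the complements $g_j:=f-f_j$, these are positive, dominated by $f$, of norm bounded below by the modulus of concavity, yet with average $\frac1n\sum_j g_j\le\varepsilon f$ of norm at most $\varepsilon$. The lower $r$-estimate must be shown to forbid this: in the model case $f=\mathbf 1=\bigvee_i\mathbf 1_{A_i}$ over a partition, the estimate forces each co-set $\mathbf 1_{A_i^{c}}$ to retain norm close to $1$, which is precisely what destroys the pathology. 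The technical heart, and the step I expect to be hardest, is to extract from the $g_j$ a sufficiently disjoint portion of mass, of norm still bounded below, so that feeding it into the lower $r$-estimate produces a lower bound growing like $n^{1/r}$ against a uniformly bounded sum, the desired contradiction. Controlling the mutual overlap of the $g_j$, for instance by peeling off at a fixed level the parts of each $g_j$ exceeding a suitable multiple of the average and bounding their pointwise multiplicity, is where the argument has to be carried out with care.
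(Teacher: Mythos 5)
Your reduction of the concavity statement is correct and is in substance the paper's own argument: the paper likewise uses $L$-convexity (once known) to pick $p$ with lattice $p$-convexity, passes to the convexification $\LL^{(1/p)}$, which is a Banach lattice with a lower $(r/p)$-estimate, and invokes \cite{LinTza1979}*{Theorem 1.f.7} before de-convexifying. The duality chain you write out --- lower estimate on $Y$ gives an upper conjugate estimate on $Y^*$, upgrade via Theorem~\ref{thm:ConvexInterval}, dualize convexity of $Y^*$ back to concavity of $Y$ through $Y^{**}$ --- is precisely the proof of the cited Lindenstrauss--Tzafriri result, so that half matches the paper modulo your re-proving the quoted ingredient. (Your care in taking $t$ strictly below $r$ so that $(r/t)'<\infty$ is the right precaution.)

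The genuine gap is the $L$-convexity claim, which you rightly identify as the crux but do not prove. The paper does not prove it from first principles either: it is exactly \cite{Kalton1984b}*{Theorem 4.1}, a substantive theorem of that paper. Your contradiction scheme cannot be completed as sketched, for a quantitative reason. The failure of $L$-convexity yields, for each $\varepsilon$, \emph{some} configuration with $n=\abs{J}$ existentially quantified and entirely uncontrolled in terms of $\varepsilon$. Now set $A=\frac1n\sum_j g_j\le \varepsilon f$ and peel at level $c$ relative to the average: $u_j=(g_j-cA)_+$, $v_j=g_j\wedge cA$. Since $v_j\le cA\le c\varepsilon f$, you get $\norm{v_j}\le c\varepsilon$, so to keep $\norm{u_j}\ge \delta/C-c\varepsilon$ bounded below you must take $c=O(1)$. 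But then the super-level sets $\enbrace{g_j>cA}$ obey only the pointwise multiplicity bound $n/c$ (since $\sum_j g_j=nA$), which is useless as $n\to\infty$; and even a \emph{bounded} multiplicity $M$ would not let you extract a large pairwise disjoint subfamily, because bounded multiplicity does not bound the chromatic number of the intersection pattern --- the $u_j$ can pairwise overlap at distinct points. Conversely, forcing near-disjointness requires $c\gtrsim n$, which destroys the residual bound $\norm{v_j}\le c\varepsilon$ because $\varepsilon$ and $n$ are unrelated. So the two demands of your peeling --- residual control and disjointness --- are incompatible at every scale, and no ``fixed level'' exists.

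This is not a bookkeeping issue to be ``carried out with care'' but the entire content of Kalton's theorem, whose actual proof proceeds along quite different lines (it is one of the main results of \cite{Kalton1984b}, not a level-set surgery on a single pathological configuration). As written, your proposal proves the second assertion of the theorem conditionally on the first, and the first remains open; the honest repair is to do what the paper does and quote \cite{Kalton1984b}*{Theorem 4.1}. One incidental point in your favour: you use Kalton's condition in its correct orientation, $(1-\varepsilon)f\le \frac1{\abs{J}}\sum_{j\in J}f_j$, whereas the displayed inequality in the paper's definition \eqref{eqreflat1.1} has the direction inverted (as stated it would be violated by $f_j=0$).
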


\begin{proof}
An application of \cite{Kalton1984b}*{Theorem 4.1} yields that $\LL$ is $L$-convex. Let $0<p<\infty$ be such that $\LL$ is lattice $p$-convex. Then the quasi-Banach lattice $\LL^{(1/p)}$ is $1$-convex, i.e., it is locally convex and satisfies a lower $r/p$-estimate. Using \cite{LinTza1979}*{Theorem 1.f.7} we deduce that $\LL^{(1/p)}$ is $s$-concave for all $s>r/p$. Therefore $\LL$ is $s$-concave for all $s>r$.
\end{proof}

Notice that the argument we used to prove Theorem~\ref{thm:ConcaveInterval} also works to prove Theorem~\ref{thm:ConvexInterval}. In turn, we can use Theorem~\ref{thm:ConcaveInterval} to obtain the following result of Kalton.

\begin{corollary}[see \cite{Kalton1984b}]
Any $L$-concave quasi-Banach lattice is $L$-convex.
\end{corollary}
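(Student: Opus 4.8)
The plan is to observe that this Corollary is an almost immediate consequence of Theorem~\ref{thm:ConcaveInterval}, once we unwind the definitions. Recall that, by definition, a quasi-Banach lattice $\LL$ being $L$-concave means precisely that it is lattice $r$-concave for some $0<r<\infty$; that is, there is a constant $C$ (namely $M_{(r)}(\LL)$) such that $N_r(\varphi)\le C\norm{v_r(\varphi)}$ for every finite family $\varphi=(f_j)_{j\in J}$ in $\LL$. So first I would simply extract such an exponent $r$ from the hypothesis of $L$-concavity.

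The key step is then to notice that lattice $r$-concavity is formally stronger than a lower $r$-estimate. Indeed, a lower $r$-estimate is exactly the inequality \eqref{eqreflat1.2} restricted to \emph{disjointly supported} families, for which $v_r(\varphi)=\abs{\sum_{j\in J}f_j}$. Since lattice $r$-concavity asks for \eqref{eqreflat1.2} to hold for \emph{all} finite families, it holds in particular for disjointly supported ones, and therefore an $L$-concave lattice automatically satisfies a lower $r$-estimate with the same constant. No genuine work is needed here beyond matching the two definitions.

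Finally, I would invoke Theorem~\ref{thm:ConcaveInterval} with this value of $r$: since $\LL$ satisfies a lower $r$-estimate, that theorem guarantees that $\LL$ is $L$-convex (it in fact yields $s$-concavity for all $s>r$ as well, which we do not need here). This closes the argument. I do not anticipate any real obstacle in this proof: the only point that requires a word of justification is the implication ``lattice $r$-concave $\Rightarrow$ lower $r$-estimate,'' and that is immediate from the fact that the defining inequality for concavity, being required of all finite families, in particular covers disjointly supported families. The substance of the result is entirely contained in Theorem~\ref{thm:ConcaveInterval} (equivalently, in Kalton's Theorem~4.1 of \cite{Kalton1984b}), and the Corollary is merely the packaging of that theorem under the weakest hypothesis that triggers it.
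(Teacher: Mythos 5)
Your proposal is correct and matches the paper's own derivation: the paper states this corollary precisely as a consequence of Theorem~\ref{thm:ConcaveInterval}, via exactly the observation you make, namely that lattice $r$-concavity trivially implies a lower $r$-estimate since the defining inequality \eqref{eqreflat1.2} for all finite families covers in particular the disjointly supported ones. Nothing to add.
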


Averaging methods are a powerful tool for studying the geometric structure of Banach spaces. Although most of these methods strongly rely on techniques which could fail when local convexity is dropped, it is still worth it to give it a chance to implementing some of these methods within the more general framework of quasi-Banach spaces. To that end, it would be convenient to formulate lattice convexity and lattice concavity in terms of averages.

Given $0<r<\infty$, the \emph{$r$-average} of a finite family $\varphi=(f_j)_{j\in J}$ in a quasi-Banach lattice $\LL$ is defined as
\[
\Ave_{j\in J} (f_j;r)=\enpar{\frac{1}{\abs{J}}\sum_{j\in J} \abs{f_j}^r }^{1/r}=\abs{J}^{-1/r} v_r(\varphi).
\]
The quasi-Banach lattice $\LL$ is $r$-convex with $M^{(r)}(\LL)\le C$ (resp., $r$-concave with $M_{(r)}(\LL)\le C$) if and only if $F\le G$ (resp., $G\le C F$), where
\[
F=\norm{\Ave_{J\in J} (f_j;r)} , \quad G= \Ave_{j\in J} (\norm{f_j};r)
\]
for any finite family $(f_j)_{j\in J}$ in $\LL$.

The lattice functional calculus permits to obtain the following lattice-valued version of Khintchine's inequalities.

\begin{theorem}\label{lem:KintchineLattice}
There are constants $T_r$ and $C_r$ such that
\begin{equation*}
\frac{1}{C_r} \Ave_{\varepsilon_j=\pm 1} \left(\sum_{j\in J} \varepsilon_j\, f_j;r\right)
\le \enpar{\sum_{j\in J} \abs{f_j}^2 }^{1/2}
\le T_r \Ave_{\varepsilon_j=\pm 1} \left(\sum_{j\in J} \varepsilon_j\, f_j;r\right)
\end{equation*}
for every quasi-Banach lattice $\LL$ and every finite
family $(f_j)_{J\in J}$ in $\LL$.
\end{theorem}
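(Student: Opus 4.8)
The plan is to reduce the lattice-valued statement to the classical scalar Khintchine inequalities by means of the lattice functional calculus. The key observation is that the lattice functional calculus described in \cite{LinTza1979}*{Theorem 1.d.1} produces a representation in which we may argue \emph{pointwise} as if we were working in a concrete K\"othe space. That is, given the finite family $(f_j)_{j\in J}$ in $\LL$, I would first pass to the sublattice generated by the finitely many elements $|f_j|$, which by Kalton's functional calculus is lattice-isometric to a space of continuous functions on some compact space $K$ (or can be represented on a measure space). In this representation each $f_j$ becomes an honest function $f_j(\omega)$, and both sides of the claimed inequality are computed by the formulas $v_2(\varphi)(\omega)=(\sum_j |f_j(\omega)|^2)^{1/2}$ and $\Ave_{\varepsilon_j=\pm1}(\sum_j \varepsilon_j f_j;r)(\omega)=(2^{-|J|}\sum_{\varepsilon}|\sum_j \varepsilon_j f_j(\omega)|^r)^{1/r}$.

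Once the problem is localized to a fixed $\omega$, the two quantities are precisely the two sides of the scalar Khintchine inequality applied to the real (or complex) numbers $(f_j(\omega))_{j\in J}$. The classical Khintchine inequality asserts exactly that, for every $0<r<\infty$, there exist constants $A_r, B_r>0$ (independent of the scalars and of $|J|$) with
\[
A_r \enpar{\sum_{j\in J} \abs{a_j}^2}^{1/2} \le \enpar{2^{-\abs{J}}\sum_{\varepsilon_j=\pm 1} \abs{\sum_{j\in J}\varepsilon_j\, a_j}^r}^{1/r} \le B_r \enpar{\sum_{j\in J} \abs{a_j}^2}^{1/2}.
\]
Setting $a_j=f_j(\omega)$ and taking $C_r=1/A_r$ and $T_r=B_r$, the desired chain of inequalities holds at every point $\omega$. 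Since the functional calculus representation respects the lattice order, a pointwise inequality between the representing functions translates back into the corresponding order inequality between the lattice elements $v_2(\varphi)$ and $\Ave_{\varepsilon_j=\pm1}(\sum_j \varepsilon_j f_j;r)$ in $\LL$. This yields precisely the two displayed inequalities in the statement.

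I expect the main obstacle, such as it is, to be purely formal rather than substantive: one must check that the lattice functional calculus legitimately allows pointwise scalar reasoning for the \emph{two-variable} expressions appearing here, namely $(\sum |f_j|^2)^{1/2}$ and the $r$-average of the signed sums. The point is that both are obtained from the finite family by applying continuous, positively homogeneous functions of several variables (the $\ell_2$-norm and, after averaging over the $2^{|J|}$ sign choices, an $\ell_r$-type average), and \cite{LinTza1979}*{Theorem 1.d.1} guarantees that such homogeneous functional calculus is well defined and order-preserving in any quasi-Banach lattice. A minor care is needed in the complex case, where $\LL$ is the complexification and $|\cdot|$ denotes the lattice modulus; the scalar Khintchine inequality holds over $\CC$ as well (with possibly different constants absorbed into $T_r$ and $C_r$), so the argument is unaffected. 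No local convexity is used anywhere, which is why the result remains valid for all $0<r<\infty$ and for arbitrary quasi-Banach lattices, exactly as stated.
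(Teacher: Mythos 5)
Your proposal is correct and takes essentially the same route as the paper, whose entire proof consists of applying the lattice functional calculus to the two positively homogeneous functions $f\enpar{(x_i)_{i=1}^n}=\enpar{\sum_{i=1}^n|x_i|^2}^{1/2}$ and $g\enpar{(x_i)_{i=1}^n}=\enpar{\Ave_{\varepsilon_i=\pm1}\abs{\sum_{i=1}^n\varepsilon_i x_i}^r}^{1/r}$ and then invoking the scalar Khintchine inequalities. Your intermediate step of representing the sublattice generated by the $\abs{f_j}$ as a space of functions so as to argue pointwise is just the standard internal mechanism by which that functional calculus (Lindenstrauss--Tzafriri, Theorem 1.d.1, valid for quasi-Banach lattices as the paper notes) is justified, so the two arguments coincide in substance.
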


\begin{proof}
Just apply the lattice functional calculus to the functions $f$, $g\colon\RR^n \to \RR$ given for each $n\in\NN$ by
\[
f((x_i)_{i=1}^n)=\left(\sum_{i=1}^n |x_i|^2\right)^{1/2}, \quad g((x_i)_{i=1}^n)= \left(\Ave_{\varepsilon_i=\pm 1} \left|\sum_{i=1}^n \varepsilon_i\, x_i\right|^{r}\right)^{1/r},
\]
and use Khintchine's inequalities \cite{Kinchine1923}.
\end{proof}

Kalton \cite{Kalton1984b} observed that $L$-convex lattices behave similarly to Banach lattices in many ways. Lemma~\ref{lem:MaureyQB} below, which generalizes to quasi-Banach lattices \cite{Maurey1974}*{Lemme 5 and Lemme 6} (see also \cite{LinTza1979}*{Theorem 1.d.6}), is in this spirit.

Give a finite family $\varphi=(f_j)_{j\in J}$ in a quasi-Banach space $\XX$ and $0<r<\infty$ we set
\[
A_r(\varphi)= \Ave_{\varepsilon_j=\pm 1}\enpar{\norm{ \sum_{J\in J} \varepsilon_j\, f_j};r}.
\]
In the case when $\XX$ is quasi-Banach lattice we also set
\[
L_2(\varphi)=\norm{\enpar{\sum_{j\in J} \abs{f_j}^2 }^{1/2}}.
\]

\begin{lemma}\label{lem:MaureyQB}
Let $\LL$ be a quasi-Banach lattice.
\begin{enumerate}[label=(\roman*), leftmargin=*, widest=ii]
\item\label{lem:MaureyQB:Convex} If $\LL$ is $L$-convex, then for every $0<r<\infty$ there is constant $C$ such that $L_2(\varphi) \le C A_r(\varphi)$ for every finite family $\varphi$ in $\LL$.

\item\label{lem:MaureyQB:Concave} If $\LL$ is $L$-concave, then for every $0<r<\infty$ there is a constant $C$ such that $L_2(\varphi) \le C A_r(\varphi)$ and $ A_r(\varphi) \le L_2(\varphi)$ for every finite family $\varphi$ in $\LL$.
\end{enumerate}
\end{lemma}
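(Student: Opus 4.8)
The plan is to combine the lattice Khintchine inequalities of Theorem~\ref{lem:KintchineLattice} with the convexity/concavity hypotheses, reducing both claims to comparisons between the norm of a lattice average and an average of norms. The key observation is that $A_r(\varphi)$ is precisely the norm applied to the Rademacher $r$-average appearing in Theorem~\ref{lem:KintchineLattice}, while $L_2(\varphi)$ is the norm of the square function $v_2(\varphi)=(\sum_j\abs{f_j}^2)^{1/2}$. Thus the two quantities are linked by taking norms in the pointwise (lattice) Khintchine estimate.

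For part \ref{lem:MaureyQB:Convex}, I would first apply the left-hand lattice Khintchine inequality from Theorem~\ref{lem:KintchineLattice}, which gives pointwise $\frac{1}{C_r}\Ave_{\varepsilon_j=\pm1}(\sum_j\varepsilon_j f_j;r)\le v_2(\varphi)$. Taking norms and using the lattice property (monotonicity of the quasi-norm) yields $\frac{1}{C_r}\norm{\Ave_{\varepsilon_j}(\sum_j\varepsilon_j f_j;r)}\le L_2(\varphi)$, but I need the reverse direction. The idea is instead to invoke $L$-convexity: by the averaged reformulation of $r$-convexity stated in the excerpt, $F=\norm{\Ave(f_j;r)}$ is controlled, and more to the point, one uses that for an $L$-convex lattice the norm of the $r$-average of a family dominates (up to a constant depending on $r$ and the convexity constant) the norm of its $2$-dimensional square function. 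Concretely, I would express $A_r(\varphi)$ as the norm of the $r$-average over signs of the lattice vectors $\sum_j\varepsilon_j f_j$, apply $L$-convexity to pass from this sign-average to the square function $v_2(\varphi)$, and thereby obtain $L_2(\varphi)\le C\,A_r(\varphi)$.

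For part \ref{lem:MaureyQB:Concave}, the new ingredient is the upper bound $A_r(\varphi)\le L_2(\varphi)$. Here I would use the right-hand lattice Khintchine inequality, which gives pointwise $v_2(\varphi)\le T_r\,\Ave_{\varepsilon_j}(\sum_j\varepsilon_j f_j;r)$, together with $L$-concavity. Since $L$-concavity (lattice $s$-concavity for some finite $s$) lets one compare the norm of a lattice $r$-average with the corresponding $r$-average of norms, I would move the norm inside the Rademacher average at the cost of a constant, yielding $A_r(\varphi)\le C'\,L_2(\varphi)$. The lower bound $L_2(\varphi)\le C A_r(\varphi)$ follows because $L$-concavity implies $L$-convexity (by the Corollary preceding the lemma), so part \ref{lem:MaureyQB:Convex} applies.

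I expect the main obstacle to be the careful bookkeeping of constants when interchanging the lattice quasi-norm with the Rademacher averaging operation in the absence of local convexity: in a Banach lattice one freely uses the triangle inequality to pull norms through averages, whereas here every such interchange must be justified through the lattice $r$-convexity or $r$-concavity inequalities \eqref{eqreflat1.1}--\eqref{eqreflat1.2} and their quantitative equivalence with $L$-convexity via Kalton's theorem. The delicate point is ensuring that the parameter $r$ in the statement (which ranges over all of $(0,\infty)$) is reconciled with the specific exponent for which $\LL$ is known to be lattice convex or concave; this is handled by the interval results, Theorems~\ref{thm:ConvexInterval} and~\ref{thm:ConcaveInterval}, which guarantee that convexity and concavity hold on a full range of exponents and thus can be matched to any prescribed $r$.
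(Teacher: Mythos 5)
Your overall strategy --- the lattice Khintchine inequalities of Theorem~\ref{lem:KintchineLattice} plus lattice convexity/concavity to interchange the quasi-norm with the Rademacher average --- is exactly the paper's, but your exponent bookkeeping contains a genuine gap. You claim that Theorems~\ref{thm:ConvexInterval} and~\ref{thm:ConcaveInterval} let the lattice convexity (resp.\ concavity) exponent ``be matched to any prescribed $r$''. They do not: Theorem~\ref{thm:ConvexInterval} propagates convexity only \emph{downward} (from an exponent to all smaller ones), and Theorem~\ref{thm:ConcaveInterval} propagates concavity only \emph{upward}. An $L$-convex lattice need not be lattice $r$-convex for large $r$ (e.g.\ $L_1$ is $L$-convex but not lattice $2$-convex), and an $L$-concave lattice need not be lattice $r$-concave for small $r$ (e.g.\ $L_2$ is $2$-concave but fails even a lower $1$-estimate). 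So in part \ref{lem:MaureyQB:Convex} when $r$ lies above the convexity threshold, and in part \ref{lem:MaureyQB:Concave} when $r$ lies below the concavity threshold, your plan of applying Khintchine at exponent $r$ and interchanging via lattice $r$-convexity or $r$-concavity stalls: the interchange inequality at exponent $r$ is simply unavailable.

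The missing ingredient --- the one-line pivot the paper uses --- is that $r\mapsto A_r(\varphi)$ is non-decreasing, since it is the $L_r$-norm over the probability space of signs of the scalar function $\varepsilon\mapsto\norm{\sum_j\varepsilon_j\, f_j}$. One therefore proves $L_2(\varphi)\le C_0\,A_s(\varphi)$ at some small $s<r$ where lattice $s$-convexity \emph{is} available, and concludes via $A_s(\varphi)\le A_r(\varphi)$; dually, for \ref{lem:MaureyQB:Concave} one proves $A_t(\varphi)\le C_1\,L_2(\varphi)$ at some large $t>r$ where lattice $t$-concavity holds, and concludes via $A_r(\varphi)\le A_t(\varphi)$. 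With this added, your sketch goes through (your reduction of the lower bound in (ii) to part (i) via ``$L$-concave implies $L$-convex'' is correct and is what the paper does). A secondary point to tidy up: your phrase ``express $A_r(\varphi)$ as the norm of the $r$-average over signs'' conflates two different quantities --- $A_r(\varphi)$ is the scalar $r$-average of norms, not the norm of the lattice average $\norm{\Ave_{\varepsilon_j=\pm1}\enpar{\sum_j\varepsilon_j\, f_j;r}}$; the comparison between these two is precisely what lattice $s$-convexity (or $t$-concavity) supplies, while the passage between the lattice sign-average and the square function $v_2(\varphi)$ is pure Khintchine plus monotonicity of the quasi-norm and needs no convexity at all.
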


\begin{proof}
Combining Theorem~\ref{lem:KintchineLattice}, Theorem~\ref{thm:ConcaveInterval} and Theorem~\ref{thm:ConvexInterval} yields $0<s<r$ and a constant $C_0$ such that $L_2(\varphi) \le C_0 A_s(\varphi)$ for all finite families $\varphi$ in $\LL$. Also, if $\LL$ has some nontrivial concavity there are $t>r$ and a constant $C_1$ such that $A_t(\varphi)\le C_1 L_2(\varphi)$ for all finite families $\varphi$ in $\LL$. Since the map $r\mapsto A_r(\varphi)$ is non-decreasing, we are done.
\end{proof}

Let $(\varepsilon_n)_{n=1}^\infty$ be a sequence of Rademacher random variables in a probability space $(\Omega,P)$. Given a quasi-Banah space $\XX$ and $0<r<\infty$ we define $\Rad(\XX)$ as the closed linear span in $L_r(P,\XX)$ of the set
\[
\{x\, \varepsilon_n\colon x\in\XX, \, n\in\NN\}.
\]
In light of Kahane-Kalton inequalities, the average $A_r(\varphi)$ associated with a finite family $\varphi$ in a quasi-Banach space $\XX$ does not essentially depend on the index $r\in(0,\infty)$, so the same goes for $\Rad(\XX)$. Lemma~\ref{lem:MaureyQB} provides a simple proof of Kahane-Kalton inequalities in the case when the target space is an $L$-concave quasi-Banach lattice, which permits to identify the space $\Rad(\XX)$. Namely, it readily gives the following result.

\begin{proposition}\label{prop:RadL2}
Let $\LL$ be an $L$-concave K\"othe space. Let $(\varepsilon_n)_{n=1}^\infty$ be a sequence of Rademacher random variables.
Then the mapping
\[
(x_n)_{n=1}^\infty \mapsto \sum_{n=1}^\infty x_n \varepsilon_n
\]
defines an isomorphism from $\LL(\ell_2)$ onto $\Rad(\LL)$.
\end{proposition}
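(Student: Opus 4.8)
\emph{Strategy.} The plan is to observe that, after two elementary identifications, the statement is nothing more than a reformulation of the two-sided inequality furnished by Lemma~\ref{lem:MaureyQB}\ref{lem:MaureyQB:Concave}. Fix the exponent $0<r<\infty$ used to build $\Rad(\LL)\subseteq L_r(P,\LL)$, and let $\varphi=(x_n)_n$ be a finitely supported sequence in $\LL$, which we regard as an element of $\LL(\ell_2)$. Write $T\varphi=\sum_n x_n\varepsilon_n$. Since the quasi-norm of the K\"othe space $\LL$ is $f\mapsto\rho(\abs{f})$ and the Rademacher variables realize each sign pattern with equal probability, for $P$-almost every point we are looking at a vector $\sum_n\varepsilon_n x_n\in\LL$, so that
\[
\norm{T\varphi}_{L_r(P,\LL)}^{r}=\Ave_{\varepsilon_n=\pm1}\norm{\sum_n \varepsilon_n\, x_n}^{r}=A_r(\varphi)^{r},
\]
whereas the very definition of the vector-valued space gives
\[
\norm{\varphi}_{\LL(\ell_2)}=\rho\enpar{\enpar{\sum_n\abs{x_n}^2}^{1/2}}=L_2(\varphi).
\]
As $\LL$ is $L$-concave, Lemma~\ref{lem:MaureyQB}\ref{lem:MaureyQB:Concave} provides, for this $r$, a constant $C$ with $A_r(\varphi)\le L_2(\varphi)\le C\,A_r(\varphi)$ for every finite family; combining this with the two identities yields
\[
\frac{1}{C}\norm{\varphi}_{\LL(\ell_2)}\le\norm{T\varphi}_{L_r(P,\LL)}\le\norm{\varphi}_{\LL(\ell_2)}.
\]
Thus $T$ is an isomorphic embedding on the subspace of finitely supported sequences, whose image is exactly the linear span of $\{x\,\varepsilon_n\colon x\in\LL,\ n\in\NN\}$, which by the definition of $\Rad(\LL)$ is dense in the target. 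Hence, once $T$ is shown to be defined and bounded below on all of $\LL(\ell_2)$, it is automatically onto.

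\emph{Passage to the whole space.} The only step needing more than completeness is the density of the finitely supported sequences in $\LL(\ell_2)$, that is, the order continuity of $\LL$. I would derive it from the hypothesis: being lattice $r$-concave for some finite $r$, the space $\LL$ satisfies a lower $r$-estimate, so for any order-bounded disjoint family $0\le g_n\le g$ one has $\norm{g}\ge\norm{\bigvee_{n\le N}g_n}=\norm{\sum_{n\le N}g_n}\ge c\,N^{1/r}\min_{n\le N}\norm{g_n}$; letting $N\to\infty$ forces such sequences to norm-converge to $0$, which is order continuity. Granted this, for $\varphi=(x_n)\in\LL(\ell_2)$ the tails $\enpar{\sum_{n>N}\abs{x_n}^2}^{1/2}$ decrease to $0$ while dominated by $\enpar{\sum_n\abs{x_n}^2}^{1/2}\in\LL$, so $\norm{\varphi-\varphi^{(N)}}_{\LL(\ell_2)}\to0$. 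The embedding estimate then shows that the partial sums $\sum_{n\le N}x_n\varepsilon_n$ are Cauchy in $L_r(P,\LL)$, defining $T\varphi\in\Rad(\LL)$, and the two-sided bound passes to the limit, the lower bound using the Fatou property of $\rho$ to identify $\lim_N L_2(\varphi^{(N)})$ with $\norm{\varphi}_{\LL(\ell_2)}$. Therefore $T$ is an isomorphism of $\LL(\ell_2)$ onto $\Rad(\LL)$.

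\emph{Main obstacle.} I expect everything to be routine once Lemma~\ref{lem:MaureyQB} is in hand, except the verification of order continuity: this is the only place where the $L$-concavity hypothesis is genuinely used beyond the quantitative Khintchine-type comparison, and without it (for instance for $\LL=\ell_\infty$) the finitely supported sequences fail to be dense and $T$ is no longer surjective. Accordingly I would present the norm computation as immediate and reserve the care for the disjointness argument that upgrades the lower $r$-estimate to order continuity.
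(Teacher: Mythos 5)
Your strategy coincides with the paper's own (very terse) treatment: the authors offer no written proof beyond asserting that Lemma~\ref{lem:MaureyQB}\ref{lem:MaureyQB:Concave} ``readily gives'' the proposition, and your two norm identities $\norm{T\varphi}_{L_r(P,\LL)}=A_r(\varphi)$ and $\norm{\varphi}_{\LL(\ell_2)}=L_2(\varphi)$ for finitely supported $\varphi$, followed by the two-sided estimate of the lemma and the density of the span of $\enbrace{x\,\varepsilon_n}$ in $\Rad(\LL)$, are exactly the intended argument. Your extra passage to the whole of $\LL(\ell_2)$ addresses a point the paper suppresses, and your diagnosis that this is where $L$-concavity is used beyond the Khintchine-type comparison is correct: without it (e.g.\ $\LL=\ell_\infty$) the map is not even defined on all of $\LL(\ell_2)$.

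There is, however, one step stated as if it were definitional when it is not: ``\ldots forces such sequences to norm-converge to $0$, \emph{which is order continuity}.'' Your lower-$r$-estimate computation shows only that order-bounded \emph{disjoint} sequences are norm null, whereas the sequences you actually need to control, the tails $g_N=\enpar{\sum_{n>N}\abs{x_n}^2}^{1/2}$, are decreasing, not disjoint. The implication (order-bounded disjoint sequences norm null) $\Rightarrow$ ($\sigma$-)order continuity is a genuine theorem even for Banach lattices — it requires $\sigma$-order completeness (available here via the Fatou property) and a nontrivial disjointification argument, as in Meyer--Nieberg or Lindenstrauss--Tzafriri — and you are invoking it in a quasi-Banach lattice, where it cannot simply be quoted. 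The cleanest repair, in the same spirit as the paper's proof of Theorem~\ref{thm:ConcaveInterval}, is to convexify: since $\LL$ is $L$-concave it is $L$-convex by Kalton's theorem, hence lattice $p$-convex for some $p>0$; then $\LL^{(1/p)}$ is, after renorming, a Banach lattice satisfying a lower $r/p$-estimate, hence order continuous by the classical Banach-lattice result; and since $g_N^{\,p}\downarrow 0$ dominated by $g^{\,p}$ in $\LL^{(1/p)}$, with $\norm{g_N}_{\LL}$ a fixed power of the norm of $g_N^{\,p}$ there, the tails tend to zero in $\LL$. With that one sentence supplied, your proof is complete and matches the paper's route.
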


To frame Proposition~\ref{prop:RadL2}, observe that if $\XX$ is a Banach space, the Bochner integral defines a bounded, linear, one-to-one map
\[
\Rad(\XX)\to c_0(\XX), \quad f\mapsto \enpar{\int_\Omega f \, \varepsilon_n\, dP}_{n=1}^\infty,
\]
so that we can think of the members of $\Rad(\XX)$ as $\XX$-valued sequences. However, if $\XX$ is non-locally convex, the Bochner integral is no longer available (see \cite{AA2013}), so such representation is uncertain.

We conclude this section by recording two theorems from \cite{Kalton1984b} that will be essential for our study. The first one tells us that $L$-convexity is inherited not only by sublattices but also by subspaces.

\begin{lemma}[see \cite{Kalton1984b}*{Theorem~4.2}]\label{thm:LCNatural}
Let $\LL_0$ and $\LL$ be quasi-Banach lattices. Assume that $\LL$ is $L$-convex and that $\LL_0$ is linearly isomorphic to a subspace of $\LL$. Then $\LL_0$ is $L$-convex.
\end{lemma}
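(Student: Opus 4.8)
The result is quoted from \cite{Kalton1984b}*{Theorem~4.2}, and the plan is to reduce it to the Rademacher-average description of $L$-convexity assembled above, exploiting that such averages only feel the linear structure. The starting point is the reformulation behind Lemma~\ref{lem:MaureyQB}\ref{lem:MaureyQB:Convex}: a quasi-Banach lattice is $L$-convex exactly when there are $0<r<\infty$ and a constant $C$ so that $L_2(\varphi)\le C\,A_r(\varphi)$ for every finite family $\varphi$. Lemma~\ref{lem:MaureyQB}\ref{lem:MaureyQB:Convex} supplies the forward implication, and the converse is the part of Kalton's analysis that combines the lattice Khintchine inequalities (Theorem~\ref{lem:KintchineLattice}) with the interval behaviour of lattice convexity (Theorem~\ref{thm:ConvexInterval}). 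The point I would stress is that $A_r(\varphi)=\Ave_{\varepsilon_j=\pm1}(\norm{\sum_j\varepsilon_j f_j};r)$ is computed solely from the quasi-norm, hence is insensitive to the order structure.

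First I would make the transport of $A_r$ precise. Writing $T\colon\LL_0\to T(\LL_0)\subseteq\LL$ for the linear isomorphism and $T\varphi=(Tf_j)_j$ for the image of a finite family $\varphi=(f_j)_j$ in $\LL_0$, linearity gives $\sum_j\varepsilon_j\,Tf_j=T(\sum_j\varepsilon_j f_j)$, so $A_r^{\LL}(T\varphi)$ and $A_r^{\LL_0}(\varphi)$ agree up to the factors $\norm{T}$ and $\norm{T^{-1}}$; equivalently, $\Rad$ is functorial for linear embeddings and carries $\Rad(\LL_0)$ isomorphically onto a subspace of $\Rad(\LL)$. Since $\LL$ is $L$-convex, Lemma~\ref{lem:MaureyQB}\ref{lem:MaureyQB:Convex} then yields $L_2^{\LL}(T\varphi)\le C\,A_r^{\LL}(T\varphi)\lesssim A_r^{\LL_0}(\varphi)$. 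To finish, I would bound the intrinsic square function $L_2^{\LL_0}(\varphi)$ by the transported datum $L_2^{\LL}(T\varphi)$ and then invoke the converse half of the reformulation to conclude that $\LL_0$ is $L$-convex.

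The hard part is precisely this last comparison, and it is where the statement genuinely lies deeper than the sublattice case. The quantity $L_2(\varphi)=\norm{(\sum_j|f_j|^2)^{1/2}}$ is manufactured from the lattice functional calculus, which $T$ does not preserve, so $L_2^{\LL_0}(\varphi)$ and $L_2^{\LL}(T\varphi)$ are a priori unrelated and the naive transport breaks down. Kalton's way around this is to test the failure of $L$-convexity in $\LL_0$ on a finite bad configuration, to push that configuration through $T$, and to analyse its image inside the $L$-convex lattice $\LL$, where the lattice Khintchine estimates of Theorem~\ref{lem:KintchineLattice} together with the convexity of $\LL$ force the configuration not to have been bad after all. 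This delicate finite-dimensional analysis is the crux; rather than reproduce it, I would quote it from \cite{Kalton1984b}*{Theorem~4.2}, exactly as the text proposes.
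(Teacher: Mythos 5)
The paper gives no proof of this lemma: it is recorded as a direct quotation of \cite{Kalton1984b}*{Theorem~4.2}, and since your proposal, after correctly locating the genuine difficulty (the square function $L_2$ is manufactured from the lattice functional calculus, which a merely linear embedding does not transport), ultimately settles that crux by citing the very same theorem, your approach coincides in substance with the paper's. One caution: your intermediate assertion that $L$-convexity is \emph{equivalent} to an estimate $L_2(\varphi)\le C\,A_r(\varphi)$ is not established anywhere in the paper --- part~(i) of Lemma~\ref{lem:MaureyQB} yields only the forward implication, and the converse is far from obvious since the inequality is vacuous on disjoint families and direct testing on degenerate configurations loses a factor depending on the family's size --- but as your final argument leans on Kalton's theorem rather than on this unproved converse, it does not create a gap.
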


The second one states that operators between $L$-convex spaces always admit Hilbert-valued extensions.

\begin{theorem}[see \cite{Kalton1984b}*{Theorem~3.3}]\label{thm:KaltonVectorizes}
Let $\LL$ be an $L$-convex quasi-Banach lattice with $L$-convexity constant $\varepsilon$. There is a constant $C$ depending only on $\varepsilon$ such that for any quasi-Banach lattice $\XX$ and any bounded linear operator $T\colon\XX\to\LL$,
\[
\norm{ \enpar{\sum_{j\in J} \abs{T(f_j)}^2}^{1/2}} \le C \norm{T} \norm{ \enpar{\sum_{j\in J} \abs{f_j}^2}^{1/2}}
\]
for any finite family $(f_j)_{j\in J}$ in $\XX$.
\end{theorem}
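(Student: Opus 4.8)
My plan is to read this as the quasi-Banach counterpart of Krivine's theorem, which asserts that a bounded operator between Banach lattices is automatically bounded on the $\ell_{2}$-valued extensions. The guiding principle is that the lattice expression $\enpar{\sum_{j\in J}\abs{f_{j}}^{2}}^{1/2}$ is \emph{not} compatible with the merely linear map $T$, whereas the signed sums are, since $T\enpar{\sum_{j}\varepsilon_{j}f_{j}}=\sum_{j}\varepsilon_{j}\,Tf_{j}$. I would therefore route the whole argument through Rademacher (later Gaussian) averages and let the $L$-convexity of $\LL$ bridge the two descriptions. As a first step, since $\LL$ is $L$-convex I would invoke Theorem~\ref{thm:ConvexInterval} to fix some $0<r\le 1$ for which $\LL$ is lattice $r$-convex, with lattice-convexity constant controlled by $\varepsilon$.

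The target side of the inequality is the tractable one. Applying Lemma~\ref{lem:MaureyQB}\ref{lem:MaureyQB:Convex} to the family $(Tf_{j})_{j\in J}$ in the $L$-convex lattice $\LL$ produces a constant $C=C(\varepsilon)$ with
\[
\norm{\enpar{\sum_{j\in J}\abs{Tf_{j}}^{2}}^{1/2}}=L_{2}\enpar{(Tf_{j})_{j\in J}}\le C\,A_{r}\enpar{(Tf_{j})_{j\in J}}.
\]
Because $\sum_{j}\varepsilon_{j}\,Tf_{j}=T\enpar{\sum_{j}\varepsilon_{j}\,f_{j}}$ and $T$ is bounded, the very definition of $A_{r}$ gives $A_{r}((Tf_{j})_{j\in J})\le\norm{T}\,A_{r}((f_{j})_{j\in J})$, so that
\[
\norm{\enpar{\sum_{j\in J}\abs{Tf_{j}}^{2}}^{1/2}}\le C\,\norm{T}\,A_{r}\enpar{(f_{j})_{j\in J}}.
\]

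What is now missing is the reverse passage $A_{r}((f_{j})_{j\in J})\le C'\,\norm{\enpar{\sum_{j}\abs{f_{j}}^{2}}^{1/2}}_{\XX}$ in the domain, and here lies the main obstacle: this inequality is precisely a lattice $r$-concavity estimate for $\XX$, a property we have not assumed and which genuinely fails for lattices such as $c_{0}$ or $\ell_{\infty}$. Thus the Rademacher route through $A_{r}$ is too lossy, and the scalarization must be avoided on the domain side. The way I would repair this is to rerun the scheme with independent Gaussian variables $(\gamma_{j})$ in place of the signs: by the stability and scaling of Gaussians one has the \emph{exact} identity
\[
\enpar{\EE_{\gamma}\abs{\sum_{j\in J}\gamma_{j}\,h_{j}}^{r}}^{1/r}=c_{r}\enpar{\sum_{j\in J}\abs{h_{j}}^{2}}^{1/2},\qquad c_{r}=\enpar{\EE\abs{\gamma_{1}}^{r}}^{1/r},
\]
valid in \emph{every} quasi-Banach lattice through the same functional calculus used in Theorem~\ref{lem:KintchineLattice}. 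This represents the square function on both sides as one and the same Gaussian average, which can be transported through the pointwise finite sums $\sum_{j}\gamma_{j}\,f_{j}$. The crux, which I expect to be the genuinely hard step, is the final comparison: one must control the $\LL$-average of $T\enpar{\sum_{j}\gamma_{j}f_{j}}$ by the $\XX$-average of $\sum_{j}\gamma_{j}f_{j}$ \emph{without} reintroducing the scalar average, thereby forcing the $L$-convexity of $\LL$ to play the role that concavity of the domain plays in the classical argument. Concretely, I would either apply the defining $L$-convexity inequality~\eqref{eqreflat1.1} directly to the randomized family, or reduce to a locally convex lattice by convexification and appeal to the Banach-lattice (Krivine--Grothendieck) case; making the target's $L$-convexity compensate for the domain's lack of concavity is the heart of the matter.
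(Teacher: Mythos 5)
You have assembled the easy half correctly and, to your credit, you have put your finger on exactly the right obstruction; but the repair you propose does not work, and the step you defer as ``the genuinely hard step'' is in fact the entire content of the theorem. (Note also that the paper offers no proof of this statement: it is imported from \cite{Kalton1984b}*{Theorem 3.3}, so the benchmark is Kalton's original argument.) Your chain $L_2\enpar{(Tf_j)_{j\in J}}\le C\,A_r\enpar{(Tf_j)_{j\in J}}\le C\norm{T}\,A_r\enpar{(f_j)_{j\in J}}$ is fine, and you rightly observe that the missing domain-side bound $A_r(\varphi)\lesssim L_2(\varphi)$ is precisely the $L$-concavity estimate of Lemma~\ref{lem:MaureyQB}~\ref{lem:MaureyQB:Concave}, which fails in $C(K)$-type lattices: for the first $n$ Rademacher functions $(r_j)_{j=1}^n$ in $L_\infty[0,1]$ one has $\norm{\enpar{\sum_j\abs{r_j}^2}^{1/2}}_\infty=\sqrt{n}$, while $\norm{\sum_j\varepsilon_j r_j}_\infty=n$ for \emph{every} choice of signs. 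The Gaussian substitute does not evade this. The stability identity $\enpar{\EE\abs{\sum_j\gamma_j x_j}^r}^{1/r}=c_r\enpar{\sum_j x_j^2}^{1/2}$ is an identity of functions on $\RR^n$; pushed through the lattice functional calculus it merely restates the square function as a constant multiple of itself, so inside the lattice it is circular. The moment you transport the randomized sums through the \emph{linear} map $T$, you must leave the functional calculus and average norms of those sums, i.e.\ compute an $A_r$-type quantity again---and Gaussians exhibit the same $n$ versus $\sqrt{n}$ gap, since $\EE\norm{\sum_{j=1}^n\gamma_j r_j}_\infty=\EE\sum_{j=1}^n\abs{\gamma_j}\approx n$. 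Your two fallbacks are likewise not viable as stated: inequality \eqref{eqreflat1.1} constrains only the target lattice and says nothing about a domain-side average, while convexification is incompatible with a merely linear $T$, because the formal identity between $\LL$ and $\LL^{(1/r)}$ is nonlinear, so $T$ induces no operator between convexifications to which the Banach-lattice case could be applied.

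There is a structural reason why no averaging argument can suffice. Every Banach lattice is $L$-convex (it is lattice $1$-convex), so taking $\LL=L_1$ and $\XX=C(K)$ the theorem specializes to Krivine's theorem, which is equivalent in strength to Grothendieck's inequality; any proof must therefore contain Grothendieck-type input, which your outline nowhere supplies. Kalton's argument provides the two ideas your plan is missing: first, the domain is reduced to a $C(K)$-space by passing to the principal ideal generated by $u=\enpar{\sum_{j\in J}\abs{f_j}^2}^{1/2}$, normed by its order unit (Kakutani), so that the inclusion into $\XX$ has norm at most $\norm{u}$ and only operators defined on $C(K)$ need be considered; second, a Grothendieck--Krivine inequality is established for operators from $C(K)$ into an $L$-convex quasi-Banach lattice, with lattice $r$-convexity (Theorem~\ref{thm:ConvexInterval}) standing in for the duality step of the classical Banach-lattice proof (the choice of a norming positive functional on the target), which is unavailable here since $\LL^*$ may be trivial. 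Until a step of this Grothendieck type is carried out, what you have proved is only the case of $L$-concave domains, which is already covered by Lemma~\ref{lem:MaureyQB}~\ref{lem:MaureyQB:Concave}, not the theorem itself.
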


\section{Vector-valued Hardy spaces in various positions}\label{sect:VVHE}\noindent
In this section we will introduce various classes of vector-valued Hardy spaces $H_p$ for $0<p\le 1$ and we will establish some identifications amongst them.

We will denote by $\DD$ the open unit disc of the complex field, and by $\TT$ the algebraic torus. As is customary, given $0<\tau<\infty$ we regard functions on $\TT$ as $\tau$-periodic functions on $\RR$, via the topological group epimorphism
\[
t\mapsto e^{2\pi i t/\tau} , \quad t \in\RR,
\]
whose kernel is $\tau \ZZ$. Any function defined on a left-closed, right-open interval has a unique periodic extension, so it can be seen as a function defined on $\TT$. This way we can identify the algebraic torus with any real interval, the most common ones being $[0,2\pi)$, $[-\pi,\pi)$, $[-1,1)$, $[0,1)$, and $[-1/2,1/2)$. In what follows we will work with $I:=[-1/2,1/2)$, which has length one and it is centered at the origin. The target space of our functions will be a Banach space $\XX$ over the real or complex field $\FF$ endowed with a norm $\norm{\cdot}_\XX$.

\subsection{Banach-valued maximal Hardy spaces.}
Let $\Dt$ be the set of all dyadic intervals contained in $I$. For each $n\in\NN_0$, let $\Dt_n$ consist of all $J\in\Dt$ with $\abs{J}= 2^{-n}$, and let $\Delta_n$ be the finite $\sigma$-algebra generated by $\Dt_n$. Let $\Mt(I,\XX)$ be the vector space of all $\XX$-valued dyadic martingales over $I$, i.e., the set of all sequences $F=(f_n)_{n=0}^\infty$ such that for each $n\in\NN_0$ the function $f_n\colon I \to\XX$ is $\Delta_n$-measurable, with
\[
\EE(f_{n+1},\Delta_{n})=f_n.
\]
We define the \emph{maximal function} of the martingale $F$ as
\[
F^*\colon I \to [0,\infty], \quad F^*=\sup_{n\in\NN_0} \norm{f_n}_\XX.
\]
We then put for each $F\in \Mt(I,\XX)$,
\[
\norm{F}_{H_p^{\delta,m}(I,\XX)}=\norm{F^*}_{L_p(I)}\in[0,\infty],
\]
and define the $\XX$-valued \emph{maximal dyadic Hardy space of index $p\in(0,1]$} as the quasi-Banach space
\[
H_p^{\delta,m}(I,\XX)= \enbrace{F\in \Mt(I,\XX) \colon \norm{F}_{H_p^{\delta,m}(I,\XX)}<\infty}.
\]

In some situations it is convenient to deal with Hardy spaces of measurable functions. On this matter we note that the martingale transform
\begin{equation}\label{eq:martingaletransform}
\delta\colon L_1(I,\XX) \to \Mt(I,\XX), \quad f \mapsto \delta(f):=(\EE(f,\Delta_n))_{n=0}^\infty,
\end{equation}
defines a natural embedding of $L_1(I,\XX)$ into $\Mt(I,\XX)$. We will thus regard $L_1(I,\XX)$ as a subspace of $\Mt(I,\XX)$, and will define $\HL_p^{\delta,m}(I,\XX)$ as the closure of $H_p^{\delta,m}(I,\XX)\cap L_1(I,\XX)$ in $H_p^{\delta,m}(I,\XX)$. The overlapping of $L_1(I,\XX)$ and this family of Hardy spaces depends on whether $p=1$ or $0<p<1$.

Given $f\in L_1(I,\XX)$ and a measurable set $A\subseteq I$ we put
\[
\Ave(f,A)= \frac{1}{\abs{A}} \int_A f.
\]
In turn, given a dyadic martingale $F=(f_n)_{n=0}^\infty$ we define
\[
\Ave(F,J)= \frac{1}{\abs{J}} \int_J f_n, \quad J\in \bigcup_{j=0}^n\Dt_j.
\]
Note that $\Ave(f,J)=\Ave(\delta(f),J)$ for all $f\in L_1(I,\XX)$ and $J\in\Dt$. Note also that
\begin{equation*}
F^*(t) = \sup_{t\in J \in \Dt} \norm{\Ave(F,J)}_\XX \quad F\in \Mt(I,\XX), \, t\in I.
\end{equation*}
Therefore, given $f\in L_1(I,\XX)$, by Lebesgue's differentation theorem (see, e.g., \cite{BenLin2000}*{Proposition 5.3}), $\norm{f}_\XX \le (\delta(f))^*$ a.e., so that
\[
\norm{f}_{L_1(I,\XX)} \le \norm{\delta(f)}_{H_1^{\delta,m}(I,\XX)}.
\]
We infer that $\HL_1^{\delta,m}(I,\XX)=H_1^{\delta,m}(I,\XX)\cap L_1(I,\XX)$, whence $\HL_1^{\delta,m}(I,\XX)=H_1^{\delta,m}(I,\XX)$ if and only if $H_1^{\delta,m}(I,\XX) \subseteq L_1(I,\XX)$.

Suppose now that $0<p<1$. Since $L_{1,\infty}(I)\subseteq L_p(I)$ and the Hardy-Littlewood maximal function $M_{HL}$ is of weak type $(1,1)$, there is a constant $C$ such that, for all $f\in L_1(I,\XX)$,
\begin{multline*}
\norm{\delta(f)}_{H_p^{\delta,m}(I,\XX)}
\le\frac{1}{1-p} \norm{\delta(f)^*}_{L_{1,\infty}(I)} \\
\le\frac{1}{1-p}\norm{M_{HL}\enpar{\norm{f}_\XX}}_{L_{1,\infty}(I)}
\le \frac{C}{1-p}\norm{f}_{L_1(I,\XX)}.
\end{multline*}
Consequently, $L_1(I,\XX)\subseteq H_p^{\delta,m}(I,\XX)$, whence $\HL_p^{\delta,m}(I,\XX)=H_p^{\delta,m}(I,\XX)$ if and only if
$L_1(I,\XX)$ is a dense subspace of $H_p^{\delta,m}(I,\XX)$.

Taking into account that the embedding $H_p^{\delta,m}(I,\XX)\subseteq \ell_\infty(L_p(I,\XX))$ is a contraction, a standard $\varepsilon/3$-argument gives that a function $F=(f_n)_{n=0}^\infty$ belongs to $\HL_p^{\delta,m}(I,\XX)$ if and only if $\lim_n \delta(f_n)=F$ in the norm-topology of $H_p^{\delta,m}(I,\XX)$. So, regardless of whether $p<1$ or $p=1$, there is a `limit-value' bounded linear map
\[
\HL_p^{\delta,m}(I,\XX) \to L_p(I,\XX), \quad (f_n)_{n=0}^\infty \mapsto \lim_n f_n,
\]
where the limit is taken in the sense of $L_p(I,\XX)$.

As far as the $L_q$-spaces for $q>1$ is concerned, we note that since $M_{HL}$ is of type $(r,r)$ for all $r\in(1,\infty)$, $\delta$ is a bounded operator from $L_q(I,\XX)$ to $H_1^{\delta,m}(I,\XX)$. Summing up, if we identify martingales with their limit functions we obtain
\begin{equation}\label{eq:dyadichain}
L_q(I,\XX) \subseteq \HL_1^{\delta,m}(I,\XX) \subseteq L_1(I,\XX) \subseteq \HL_p^{\delta,m}(I,\XX) \subseteq L_p(I,\XX)
\end{equation}
continuously for $0<p<1<q\le \infty$.

A Banach space $\XX$ is said to have the RNP if the statement of Radon-Nikod\'ym holds for $\XX$-valued measures. This property can be characterised in terms of martingales. To be precise we will use that, as Bourgain and Rosenthal claimed in \cite{BR1980}*{Proposition in page 69}, $\XX$ has the RNP if and only for every uniformly bounded dyadic martingale, i.e., any dyadic martingale $F=(f_n)_{n=0}^\infty$ with
\[
\Vert F^*\Vert_\infty=\sup_n \Vert f_n\Vert_\infty<\infty
\]
there is $f\in L_\infty(I,\XX)$ such that $\delta(f)=F$. Indeed, the classical construction of a uniformly bounded martingale $(f_n)_{n=0}^\infty$ over $[0,1)$ with
\begin{equation}\label{eq:tree}
\inf\enbrace{ \norm{f_n(t) - f_{n-1}(t)}_\XX \colon t\in[0,1), \, n\in\NN} >0
\end{equation}
from a non-dentable set of the unit ball of $\XX$ (see, e.g., \cite{BenLin2000}*{Proof of Theorem 5.8}) can easily adapted to yield a subsequence of a dyadic martingale. In contrast, there are Banach spaces $\XX$ with no $\XX$-valued uniformly bounded dyadic martingale satisfying \eqref{eq:tree}, and still do not have the RNP (see \cite{BR1980}*{Theorem 2.4}).

\begin{theorem}\label{thm:density}
Given $0<p\le 1$ and a Banach space $\XX$, $\HL_p^{\delta,m}(I,\XX)=H_p^{\delta,m}(I,\XX)$ if and only if $\XX$ has the RNP.
\end{theorem}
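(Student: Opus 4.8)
The plan is to first recast the identity $\HL_p^{\delta,m}(I,\XX)=H_p^{\delta,m}(I,\XX)$ as an a.e.\ convergence statement for martingales. Using the description already obtained above---that $F=(f_n)_{n=0}^\infty$ lies in $\HL_p^{\delta,m}(I,\XX)$ if and only if $\delta(f_N)\to F$ in $H_p^{\delta,m}(I,\XX)$---I would compute the maximal function of the difference $F-\delta(f_N)$, which is exactly $\sup_{n>N}\norm{f_n-f_N}_\XX$. Since this is dominated by $2F^*\in L_p(I)$, the dominated convergence theorem shows that $\delta(f_N)\to F$ precisely when $(f_n(t))_n$ is Cauchy for almost every $t$; conversely $L_p$-convergence forces a subsequence of $\sup_{n>N}\norm{f_n-f_N}_\XX$ to vanish a.e., which again yields the Cauchy property. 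Hence $\HL_p^{\delta,m}(I,\XX)=H_p^{\delta,m}(I,\XX)$ if and only if \emph{every} martingale $F$ with $F^*\in L_p(I)$ converges almost everywhere, and the theorem becomes a martingale convergence statement equivalent to the RNP.

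For the direction that $\HL_p^{\delta,m}(I,\XX)=H_p^{\delta,m}(I,\XX)$ forces the RNP, I would argue by contraposition using the Bourgain--Rosenthal characterization recalled above. If $\XX$ fails the RNP there is a uniformly bounded dyadic martingale $F=(f_n)$ with $\delta(f)\ne F$ for every $f\in L_\infty(I,\XX)$. Such an $F$ cannot converge a.e.: if $f_n\to f$ a.e.\ then $\norm{f}_\XX\le\norm{F^*}_\infty$, and bounded convergence on each atom $A\in\Delta_n$ gives $\int_A f=\lim_m\int_A f_m=\int_A f_n$, so that $\delta(f)=F$, a contradiction. Since a uniformly bounded martingale satisfies $F^*\in L_\infty(I)\subseteq L_p(I)$, this produces an element of $H_p^{\delta,m}(I,\XX)\setminus\HL_p^{\delta,m}(I,\XX)$, as required.

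The substantial direction is that the RNP implies a.e.\ convergence of every $F$ with $F^*\in L_p(I)$. Because $\{F^*\le\lambda\}\uparrow\{F^*<\infty\}$, a set of full measure, it suffices to prove convergence a.e.\ on $\{F^*\le\lambda\}$ for each fixed $\lambda$. I would stop at level $\lambda$, setting $\tau=\inf\{n\colon\norm{f_n}_\XX>\lambda\}$ and $G=(f_{n\wedge\tau})$: on $\{\tau<\infty\}$ the process $G$ is eventually constant, hence converges, while $\{\tau=\infty\}=\{F^*\le\lambda\}$, so it is enough to show $G$ converges a.e. The first stopping only bounds the increments of $G$ before time $\tau$ by $2\lambda$; the genuine difficulty, and the feature that distinguishes $p<1$ from $p=1$, is that the single stopping jump $f_\tau-f_{\tau-1}$ is unbounded, so $G$ is \emph{neither} uniformly bounded \emph{nor} $L_1$-bounded, and the naive reduction to the Bourgain--Rosenthal setting breaks down. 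This is the main obstacle.

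To get around it I would apply the Davis decomposition $G=g+h$ to the stopped martingale. The large-jump part $h$ has $\sum_k\norm{h_k-h_{k-1}}_\XX<\infty$ almost everywhere: the record-breaking increments grow geometrically and hence sum to at most $2\sup_k\norm{G_k-G_{k-1}}_\XX\le 4F^*<\infty$ a.e., while the associated compensators are controlled on the same set by the conditional Borel--Cantelli lemma; consequently $h$ converges a.e. The small-jump part $g$ inherits \emph{uniformly} bounded increments (bounded by $8\lambda$, since before $\tau$ the running maximum of the increments is at most $2\lambda$ and after $\tau$ the increments vanish). A second stopping $\sigma=\inf\{n\colon\norm{g_n}_\XX>\mu\}$ then yields a genuinely uniformly bounded martingale $g^{\sigma}$ (bounded by $\mu+8\lambda$), which converges a.e.\ by the RNP through the Bourgain--Rosenthal characterization; letting $\mu\to\infty$ shows $g$ converges a.e.\ on $\{\sup_n\norm{g_n}_\XX<\infty\}$, which has full measure inside $\{\tau=\infty\}$ because there $G$ is bounded and $h$ converges. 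Combining, $G=g+h$ converges a.e.\ on $\{F^*\le\lambda\}$, and letting $\lambda\to\infty$ completes the argument.
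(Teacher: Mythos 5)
Your reformulation of the identity $\HL_p^{\delta,m}(I,\XX)=H_p^{\delta,m}(I,\XX)$ as an a.e.\ convergence statement is correct, and your derivation of the RNP from density via the Bourgain--Rosenthal characterization is sound (the paper's proof of that direction is essentially your argument run forwards rather than by contraposition). The genuine gap is in the hard direction, at the Davis decomposition, namely the claim that the compensators $\EE\bigl(\norm{d_k''}_\XX\mid\Delta_{k-1}\bigr)$ of the big-jump part are summable a.e.\ on $\enbrace{\tau=\infty}$ ``by the conditional Borel--Cantelli lemma''. The L\'evy-type extension of Borel--Cantelli gives, for nonnegative adapted sequences $(X_k)$, the inclusion $\enbrace{\sum_k\EE(X_k\mid\Delta_{k-1})<\infty}\subseteq\enbrace{\sum_k X_k<\infty}$ in general; but the converse direction---the one you need, passing from the pointwise bound $\sum_k\norm{d_k''}_\XX\le 4F^*<\infty$ to summability of the compensators---requires the $X_k$ to be uniformly bounded or dominated by an \emph{integrable} function. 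Here the only dominating function is a multiple of $F^*$, which lies in $L_p(I)$ with $p<1$ but not in $L_1(I)$: the non-integrable overshoot $\norm{f_\tau}_\XX$, which you yourself identified as the main obstacle, reappears untouched inside the compensator. Standard examples (jumps of size $2^k$ occurring with conditional probability $2^{-k}$, which are compatible with $F^*\in L_p$ for $p<1$ but not with $F^*\in L_1$) show that without integrable domination this implication genuinely fails; and the algebraic identity $\EE(d_k''\mid\Delta_{k-1})=-\EE(d_k'\mid\Delta_{k-1})$ only yields a uniform bound of order $\lambda$ per term, not summability. The last step then compounds the problem: you argue that $\enbrace{\sup_n\norm{g_n}_\XX<\infty}$ has full measure inside $\enbrace{\tau=\infty}$ ``because there $G$ is bounded and $h$ converges'', which is circular once the convergence of $h$ is in doubt.

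The paper circumvents precisely this overshoot issue by replacing stopping with Calder\'on--Zygmund conditioning: the good part $G_\lambda$ is the projection of $F$ onto the $\sigma$-algebra generated by the \emph{doubled} maximal bad intervals, and maximality forces the averages over the doubles to be at most $\lambda$, see \eqref{eq:AverageDoubling}; hence $G_\lambda$ is uniformly bounded globally, with no unbounded jump to compensate, while the bad part satisfies $B_\lambda^*\le 2F^*\chi_{I\setminus W_\lambda}$ with $\abs{I\setminus W_\lambda}\to 0$ by \ref{it:VGB} and \ref{it:B}, so that dominated convergence gives $G_\lambda\to F-\delta(f_0)$ in the norm of $H_p^{\delta,m}(I,\XX)$. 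The RNP, via Bourgain--Rosenthal, then puts each $G_\lambda$ in $\delta(L_\infty(I,\XX))$, and density follows directly---no a.e.-convergence reformulation, no Davis decomposition, no compensators. If you wish to keep your architecture, replace your stopped martingale by this CZ good part: on $W_\lambda$ the bad component vanishes identically, so $F$ converges a.e.\ there, and $\abs{I\setminus W_\lambda}\to 0$ finishes the argument.
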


We will base the proof of Theorem~\ref{thm:density} on a version for vector-valued martingales of Calder\'on-Zygmund's decomposition of functions into a `good' part and a `bad' part, a technique that goes back to \cite{CZ1952}.

Let $\Mt_0(I,\XX)$ be the subspace of $\Mt(I,\XX)$ consisting of all $\XX$-valued dyadic martingales $(f_n)_{n=0}^\infty$ with $f_0=0$. If $F=(f_n)_{n=0}^\infty\in\Mt_0(I,\XX)$ then the unit interval $I$ does not belong to
\[
\Et_\lambda=\enbrace{ J \in \Dt \colon \norm{\Ave(F,J)}_\XX>\lambda}
\]
for any $\lambda>0$. We have
\[
U_\lambda:=\enbrace{t\in[0,1) \colon F^*(t)>\lambda}=\bigcup_{J\in\Et_\lambda} J.
\]
The family $(U_\lambda)_{\lambda>0}$ decreases as $s$ increases, and
\begin{equation}\label{eq:integral}
\norm{F}_{H_p^{\delta,m}(I,\XX)} = \enpar{ \int_0^\infty \lambda^p \abs{U_\lambda}\, d\lambda}^{1/p}.
\end{equation}

Let $\Dt_\lambda$ be the set of all maximal intervals of $\Et_\lambda$. Note that the intervals of $\Dt_\lambda$ are pairwise disjoint, and their union is $U_\lambda$.

For each $J\in\Dt$, let $D(J)$ denote the dyadic cube containing $J$ of length twice the length of $J$. Since $I\notin \Et_\lambda$, $D(J)\in \Dt$ for all $J\in \Dt_\lambda$. The maximality of the intervals of $\Dt_\lambda$ implies that $D(J)$ is not contained in $U_\lambda$ for any $J\in \Dt_\lambda$. Hence,
\begin{equation}\label{eq:AverageDoubling}
\norm{\Ave(F,D(J))} \le \lambda, \quad J\in \Dt_\lambda.
\end{equation}

Let $\Bt_\lambda\subseteq \Dt_\lambda$ be set of all intervals $J$ such that $D(J)$ is maximal within the family of sets $(D(J))_{J\in \Dt_\lambda}$. As before, $(D(J))_{J\in \Bt_\lambda}$ is a family of pairwise disjoint sets, and
\[
V_\lambda:=\bigcup_{J\in \Dt_\lambda} D(J) = \bigcup_{J\in \Bt_\lambda} D(J).
\]
We have
\begin{equation} \label{eq:VsUs}
\abs{V_\lambda} \le \sum_{J\in \Dt_\lambda} \abs{D(J)} \le 2 \sum_{J\in \Dt_\lambda} \abs{J}=2\abs{U_\lambda}.
\end{equation}
For each $n\in\NN$, set
\[
\Bt_{\lambda,n}=\Bt_\lambda\cap \enpar{\bigcup_{j=0}^n \Dt_j},
\]
$V_{\lambda,n}=\cup_{J\in\Bt_{\lambda,n}} D(J)$, and
\[
\Gt_{\lambda,n}=\enbrace{J\in\Dt_n \colon J \cap V_{\lambda,n}=\emptyset}.
\]
Let $\Ft_{\lambda,n}$ be the $\sigma$-algebra generated by
\[
\enbrace{D(J) \colon J\in \Bt_{\lambda,n}} \cup \Gt_{\lambda,n}.
\]
Set also $\Ft_{\lambda,0}=\{\emptyset, I\}$. The filtration $(\Ft_{\lambda,n})_{n=0}^\infty$ is coarser than $(\Delta_n)_{n=0}^\infty$. Then, if for $n\in\NN_0$ we let $\EE(f_n,\Ft_n)=g_{n,\lambda}$,
\begin{equation}\label{eq:goodbounded}
\norm{g_{n,\lambda}}_\XX \le \norm{f_n}_\XX, \quad n\in\NN_0.
\end{equation}
By the maximality of the members of $\Bt_{\lambda,n}$,
\[
G_\lambda:=\enpar{g_{n,\lambda}}_{n=0}^\infty\in \Mt_0(I,\XX).
\]
Set $B_\lambda:=F-G_\lambda$.

In this setting we say that $(G_\lambda,B_\lambda)$ is the \emph{Calder\'on-Zygmund decomposition of $F$ at threshold $\lambda$}. We will refer to
\[
\Ct_\lambda:=\enbrace{D(J) \colon J \in \Bt_\lambda}
\]
as the \emph{collection of Calder\'on-Zygmund intervals of $F$ at threshold $\lambda$}, and we will say that $W_\lambda:=I \setminus V_\lambda$ is the \emph{Calder\'on-Zygmund measurable set of $F$ at threshold $\lambda$}.

We denote by $\Sigma_\lambda$ the $\sigma$-algebra over $I$ consisting of all measurable sets that either contain or are disjoint with each set in $\Ct_\lambda$. We call $\Sigma_\lambda$ the \emph{Calder\'on-Zygmund $\sigma$-algebra of $F$ at threshold $\lambda$}, and it determines the partition $\Ct_\lambda \cup\{W_\lambda\}$ of $I$.

To compare decompositions at different thresholds, we pick $0<\lambda\le \eta<\infty$, $H\in \Ct_\eta$, and $A\in\Sigma_\lambda$ with $A\cap D(H)\not=\emptyset$. Since $U_\eta\subseteq U_\lambda$, there is $K\in\Dt_\lambda$ with $H\subseteq K$. In turn, there is $J\in \Ct_\lambda$ such that $D(K)\subseteq D(J)$. Since $D(H)\subseteq D(K)$, it follows that $D(H)\subseteq D(J)$, whence $D(J)\cap A\not=\emptyset$. Therefore $D(J)\subseteq A$ and so $D(H)\subseteq A$. This proves that
\begin{enumerate}[label=(CZ.\arabic*), leftmargin=*]
\item\label{it:GBSA} the family $(\Sigma_\lambda)_{\lambda>0}$ of Calder\'on-Zygmund $\sigma$-algebras of $F\in \Mt_0(I,\XX)$ increases as $\lambda$ increases.
\end{enumerate}

Combining \eqref{eq:integral}, \eqref{eq:VsUs} and \ref{it:GBSA} gives that
\begin{enumerate}[label=(CZ.\arabic*), leftmargin=*,resume]
\item\label{it:VGB} the family $(W_\lambda)_{\lambda>0}$ of Calder\'on-Zygmund measurable sets of $F\in \Mt_0(I,\XX)$ increases as $\lambda$ increases, and for $0<p\le 1$,
\[
\norm{F}_{H_p^{\delta,m}(I,\XX)} \le \enpar{ \int_0^\infty \lambda^p \abs{I\setminus W_\lambda}\, d\lambda}^{1/p} \le 2^{1/p} \norm{F}_{H_p^{\delta,m}(I,\XX)}.
\]
In particular,
\[
\lim_{\lambda\to \infty} \abs{I \setminus W_\lambda}=0
\]
provided that $F\in H_p^{\delta,m}(I,\XX)$.
\end{enumerate}

We next show that both components of the Calder\'on-Zygmund decompositon of $F\in \Mt_0(I,\XX)$ can be controlled. If $J\in \Et_\lambda$, then $J\subseteq J_1$ for some $J_1\in\Dt_\lambda$. In turn, $D(J_1)\subseteq D(J_2)$ for some $J_2\in \Bt_\lambda$. Clearly, $\abs{J} \le \abs{J_2}$ and $J\subseteq D(J_2)$. Therefore, $\Et_\lambda\cap \Gt_{\lambda,n}=\emptyset$ for all $n\in\NN$. By \eqref{eq:AverageDoubling},
\begin{enumerate}[label=(CZ.\arabic*), leftmargin=*,resume]
\item\label{it:G} The good component $G_\lambda$ at threshold $\lambda\in(0,\infty)$ of any $F\in\Mt_0(I,\XX)$ satisfies $\norm{G_\lambda^*}_\infty \le \lambda$.
\end{enumerate}

Let $t\in I \setminus V_\lambda$ and $n\in\NN$. Then, $t\in J$ for some $J\in \Gt_{\lambda,n}$. Therefore $G_\lambda(t)=\Ave(f_n,J)=f_n(t)$. Consequently, by
\eqref{eq:VsUs} and \eqref{eq:goodbounded},
\begin{enumerate}[label=(CZ.\arabic*), leftmargin=*,resume]
\item\label{it:B} The bad component $B_\lambda$ and the Calder\'on-Zygmund measurable set $W_\lambda$ at threshold $\lambda\in(0,\infty)$ of any $F\in\Mt(I,\XX)$ satisfy
\[
B_\lambda^* \le 2 F^* \chi_{I\setminus W_\lambda}.
\]
\end{enumerate}

If $F$ is a function, so are $G_\lambda$ and $B_\lambda$. Precisely, if $f\in L_1(I,\XX)$, the good part of $\delta(f)$ at threshold $\lambda$ is $\delta(g_\lambda)$, where
\[
g_\lambda=\EE(f,\Sigma_\lambda)= f \chi_{W_\lambda}+\sum_{J \in \Ct_\lambda} \Ave(f,J) \chi_{J}.
\]
We call $(g_\lambda,f-g_\lambda)$ the Calder\'on-Zygmund decomposition of $f$ at threshold $\lambda$.

\begin{proof}[Proof of Theorem~\ref{thm:density}]
Suppose that $\XX$ has the RNP, and pick $F=(f_n)_{n=0}^\infty\in H_p^{\delta,m}(I,\XX)$. Let $(G_\lambda,B_\lambda)$ be the Calder\'on-Zygmund decomposition of $F-\delta(f_0)$ at threshold $\lambda\in(0,\infty)$. By \ref{it:VGB}, \ref{it:B} and Lebesgue's dominated convergence theorem, $\lim_{\lambda\to\infty} G_\lambda=F-\delta(f_0)$ in the norm-topology of $H_p^{\delta,m}(I,\XX)$. By \ref{it:G}, $G_\lambda\in \delta(L_\infty(I,\XX))$ for all $\lambda\in(0,\infty)$.

Suppose now that $\HL_p^{\delta,m}(I,\XX)=H_p^{\delta,m}(I,\XX)$, and pick a uniformly bounded martingale $F=(f_n)_{n=0}^\infty\in \Mt(I,\XX)$. There is a function $f\colon I \to \XX$ such that $\lim_n f_n=f$ in the norm-topology of $L_p(I,\XX)$. By Lebesgue's dominated convergence theorem, $f\in L_\infty(I,\XX)$ and $\lim_n f_n=f$ in the norm-topology of $L_1(I,\XX)$. Hence, $F=\delta(f)$.
\end{proof}

To introduce Hardy spaces defined via the Poisson maximal function we consider the vector space $\Dt'(\TT,\XX)$ of all continuous linear maps from the topological vector space $\Ct^{(\infty)}(\TT,\FF)$ into $\XX$.

Let $P_r\colon\RR\to\RR$ be the Poisson kernel at a point $r\in[0,1)$, defined by $P_r(t)=\mathbf{P}(r e^{2\pi i t})$, where
\[
\mathbf{P}(z)=\Re \enpar{\frac{1+z}{1-z}}.
\]
Given $f\in \Dt'(\TT,\XX)$, put
\[
M(f)\colon \RR \to [0,\infty), \quad t \mapsto \sup_{0\le r<1} \norm{P_r*f(t)}_\XX.
\]
The $\XX$-valued \emph{maximal Hardy space of index $p\in(0,1]$} will be the quasi-Banach space of all $f\in \Dt'(\TT,\XX)$ such that
$M(f)\in L_p(\TT)$, i.e.,
\[
H_p^m(\TT,\XX)=\enbrace{f\in\Dt'(\TT,\XX) \colon \norm{f}_{H_p^m(\TT,\XX)}<\infty},
\]
where
\[
\norm{f}_{H_p^m(\TT,\XX)} =\norm{M(f)}_{L_p(I)}.
\]
We think of functions in $L_1(\TT,\XX)$ as distributions in $\Dt'(\TT,\XX)$, and define $\HL_p^m(\TT,\XX)$ as the closure of
\[
H_p^m(\TT,\XX)\cap L_1(\TT,\XX)
\]
in $H_p^m(\TT,\XX)$. Blasco and Garc\'{\i}a-Cuerva \cite{BGC1987} proved results that run parallel to the ones for dyadic spaces recorded above, and which we list next for reference.

\begin{theorem}[\cite{BGC1987}*{Theorem 1.1}]\label{thm:BGC}
Let $0<p\le 1$ and $\XX$ be a Banach space.
\begin{enumerate}[label=(\roman*), leftmargin=*,widest=iii]
\item If $p<1$, $L_1(\TT,\XX)\subseteq H_p^m(\TT,\XX)$.
\item $H_1^m(\TT,\XX)\cap L_1(\TT,\XX)=\HL_1^m(\TT,\XX)$.
\item $\HL_p^m(\TT,\XX)=H_p^m(\TT,\XX)$ if and only if $\XX$ has the RNP property.
\end{enumerate}
\end{theorem}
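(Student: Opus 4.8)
The plan is to run, for the Poisson maximal function $M(\cdot)$, the same program that produced Theorem~\ref{thm:density} for the dyadic maximal function, replacing dyadic conditional expectations by the Poisson means $P_r*(\cdot)$ and the martingale characterisation of the RNP by its harmonic counterpart.

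For parts (i) and (ii) I would start from the pointwise domination of the Poisson maximal function by the Hardy--Littlewood maximal operator: since the kernels $(P_r)_{0\le r<1}$ share a common even, radially decreasing, integrable majorant, one has $M(f)\le C\,M_{HL}(\norm{f}_\XX)$ for every $f\in L_1(\TT,\XX)$. For (i), the weak type $(1,1)$ of $M_{HL}$ together with the embedding $L_{1,\infty}(\TT)\subseteq L_p(\TT)$ (valid because $\TT$ has finite measure) gives $\norm{f}_{H_p^m(\TT,\XX)}\le \frac{C}{1-p}\,\norm{f}_{L_1(\TT,\XX)}$, exactly as in the displayed computation preceding \eqref{eq:dyadichain}. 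For (ii) I would record that, $(P_r)$ being an approximate identity, $P_r*f\to f$ in $L_1(\TT,\XX)$ and almost everywhere as $r\to 1^-$, so $\norm{f}_\XX\le M(f)$ a.e.\ and hence $\norm{f}_{L_1(\TT,\XX)}\le\norm{f}_{H_1^m(\TT,\XX)}$. This inequality shows that $H_1^m(\TT,\XX)\cap L_1(\TT,\XX)$ is already closed in $H_1^m(\TT,\XX)$: an $H_1^m$-Cauchy sequence of $L_1$-functions is $L_1$-Cauchy, its two limits agree as distributions (the pairing is $H_1^m$-continuous because $\enangle{h,\phi}=\lim_{r\to1}\int_\TT (P_r*h)(t)\,\phi(t)\,dt$ and $\norm{\int_\TT (P_r*h)(t)\,\phi(t)\,dt}_\XX\le\norm{\phi}_\infty\,\norm{h}_{H_1^m(\TT,\XX)}$), so the $H_1^m$-limit lies in $L_1$ and thus $H_1^m(\TT,\XX)\cap L_1(\TT,\XX)=\HL_1^m(\TT,\XX)$.

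For part (iii), the implication RNP $\Rightarrow \HL_p^m(\TT,\XX)=H_p^m(\TT,\XX)$ I would obtain by mimicking the proof of Theorem~\ref{thm:density}. Given $f\in H_p^m(\TT,\XX)$, I would perform a Calder\'on--Zygmund splitting at threshold $\lambda$ adapted to $M(f)$: decomposing the open set $\{M(f)>\lambda\}$ into its component arcs and averaging $f$ over them yields a good part $g_\lambda$ whose Poisson maximal function is $\lesssim\lambda$, so its harmonic extension $(r,t)\mapsto P_r*g_\lambda(t)$ is bounded on $\DD$. The harmonic form of the RNP then represents $g_\lambda$ by a function of $L_\infty(\TT,\XX)\subseteq L_1(\TT,\XX)\cap H_p^m(\TT,\XX)$, while the Poisson analogues of \ref{it:VGB} and \ref{it:B} force $g_\lambda\to f$ in $H_p^m(\TT,\XX)$ as $\lambda\to\infty$, whence $f\in\HL_p^m(\TT,\XX)$. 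For the converse I would argue by contraposition through a boundary-value map: the estimate $\norm{f}_\XX\le M(f)$ shows that the identity on $H_p^m(\TT,\XX)\cap L_1(\TT,\XX)$ extends to a bounded operator $R\colon\HL_p^m(\TT,\XX)\to L_p(\TT,\XX)$, and a limiting argument gives $P_r*f=P_r*R(f)$ for every $r$, so the harmonic extension of any $f\in\HL_p^m(\TT,\XX)$ is the Poisson integral of $R(f)\in L_p(\TT,\XX)$. If $\XX$ fails the RNP, the harmonic characterisation furnishes a bounded $\XX$-valued harmonic function $u$ on $\DD$ that is the Poisson integral of no $L_1(\TT,\XX)$ function; its boundary distribution $f$ has $M(f)\le\sup_{\DD}\norm{u}_\XX<\infty$, hence $f\in H_p^m(\TT,\XX)$, yet $f\in\HL_p^m(\TT,\XX)$ would give $u(r,\cdot)=P_r*R(f)$ with $R(f)\in L_p(\TT,\XX)$, and the boundedness of $u$ would then force $R(f)\in L_\infty(\TT,\XX)$, a contradiction.

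The main obstacle is the Calder\'on--Zygmund step in the forward direction of (iii): unlike the dyadic situation behind Theorem~\ref{thm:density}, the level sets $\{M(f)>\lambda\}$ are not nested families of dyadic intervals, so the clean martingale bookkeeping of \ref{it:GBSA}--\ref{it:B} is not directly available; one must instead control the good part through its harmonic extension and import, as an external input, the harmonic version of the RNP (every bounded $\XX$-valued harmonic function on $\DD$ is the Poisson integral of an $L_\infty(\TT,\XX)$ function if and only if $\XX$ has the RNP).
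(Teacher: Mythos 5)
You should first note that the paper does not prove this statement at all: it is quoted verbatim from \cite{BGC1987}*{Theorem 1.1} (``which we list next for reference''), so the only internal comparison available is with the paper's proof of the dyadic analogue, Theorem~\ref{thm:density}, which is exactly the template you follow. Your parts (i) and (ii) are correct: the domination $M(f)\le C\, M_{HL}(\norm{f}_\XX)$, the weak type $(1,1)$ argument, and the closedness of $H_1^m(\TT,\XX)\cap L_1(\TT,\XX)$ via the $H_1^m$-continuity of the distributional pairing all go through (in (ii) you are tacitly using the Lebesgue differentiation theorem for Bochner-integrable functions to get $\norm{f}_\XX\le M(f)$ a.e., which is legitimate). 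The converse direction of (iii), resting on the Bukhvalov--Danilevich-type characterization (RNP $\Leftrightarrow$ every bounded $\XX$-valued harmonic function on $\DD$ is a Poisson integral of an $L_\infty(\TT,\XX)$ function), is also sound and repairable in detail: pointwise evaluation of the harmonic extension is continuous in the $H_p^m$ quasi-norm, uniform boundedness of $P_r*f$ together with $L_p$-convergence to $R(f)$ gives $R(f)\in L_\infty(\TT,\XX)$ by Vitali, and the semigroup identity $P_s*P_r*f=P_{sr}*f$ then identifies $u$ with the Poisson integral of $R(f)$, the desired contradiction.

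The genuine gap is in the forward direction of (iii), precisely at the point you flag as the ``main obstacle'' but then pass over. Two concrete problems. First, for $p<1$ an element of $H_p^m(\TT,\XX)$ is a distribution, not a function, so ``averaging $f$ over the component arcs of $\{M(f)>\lambda\}$'' is undefined; the dyadic proof of Theorem~\ref{thm:density} never meets this difficulty because a martingale carries its averages $\Ave(F,J)$ by definition. The repair requires the smooth Calder\'on--Zygmund decomposition of distributions (Whitney decomposition of the open set, a subordinate partition of unity, and estimates applied to $P_s*f$ uniformly in $s$), which is genuinely harder machinery than the $\sigma$-algebra bookkeeping of \ref{it:GBSA}--\ref{it:B}. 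Second, even for $f\in L_1(\TT,\XX)$ sharp averages produce bad parts with only zero-order cancellation, and the Poisson maximal function of a mean-zero bump supported on an arc $J$ decays only like $\lambda\abs{J}^2/\operatorname{dist}(t,J)^2$ away from $J$; this is not $p$-th power integrable over the complement when $p\le 1/2$, so your claimed Poisson analogue of \ref{it:B} fails in that range. One must subtract polynomial corrections so that the bad parts have vanishing moments up to order $\floor{1/p}-1$ --- exactly the condition \eqref{it:moments} that the paper imposes on non-dyadic atoms, in contrast with dyadic atoms which only need $\int_I a=0$. This moment bookkeeping, absent from your sketch, is where the real work in \cite{BGC1987} lies (their atomic decomposition, cited as \cite{BGC1987}*{Corollary 3.6} in the paper's proof of Theorem~\ref{thm:BGC+Mol}).
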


\subsection{Banach-valued atomic Hardy spaces.}
An \emph{$\XX$-valued dyadic $p$-atom} associated with an interval $J\in \Dt$ will be a strongly measurable function $a\colon I \to \XX$ such that
\begin{enumerate}[label=(A.\arabic*)]
\item $\supp(a)\subseteq J$,
\item $\norm{a}_{L_\infty(I,\XX) }\le \abs{J}^{-1/p}$, and
\item\label{it:atom:cancel} $ \int_I a=0$.
\end{enumerate}

Let $\At^\delta_p(I,\XX)$ be the set of all $a\colon I\to \XX$ that are either constant functions attaining its single value in the unit ball of $B_\XX$ or are dyadic $p$-atoms. Since
\[
\At^\delta_p(I,\XX) \subseteq B_{L_p(I,\XX)},
\]
given a countable family $(a_j)_{j\in \Jt}$ in $\At^\delta_p(I,\XX)$, and a scalar-value family $(\lambda_j)_{j\in \Jt}$ in $\ell_p(\Jt)$ we can safely define
\[
\sum_{j\in\Jt} \lambda_j \, a_j \in L_p(I,\XX).
\]
The $\XX$-valued \emph{atomic dyadic Hardy space} $H_p^{\delta,a}(I,\XX)$ will be the quasi-Banach space of all functions $f\in L_p(I,\XX)$ that can be expanded in this way endowed with the $p$-norm
\[
f\mapsto \inf\enbrace{ \enpar{\sum_{j\in \Jt} \abs{\lambda_j}^p}^{1/p} \colon f=\sum_{j\in\Jt} \lambda_j \, a_j }.
\]
In other words, $H_p^{\delta,a}(I,\XX)$ is the $p$-Banach space constructed from $\At^\delta_p(I,\XX)$ by means of the $p$-convexification method (see, e.g., \cite{AACD2018}*{Section~2.2}).

$p$-atoms associated with an arbitrary interval $J\subseteq I$ are defined in a slightly different manner. For each $j\in\NN_0$ we denote by $p_j$ the monomial
\[
t\mapsto p_j(t)=t^j.
\]
Instead of \ref{it:atom:cancel} we impose to the atom $a$ to have null moments up to orden $1/p-1$, that is,
\begin{equation}\label{it:moments}
\int_I a, p_j = \int_I a(t)\, t^j \, dt=0, \quad 0\le j \le \floor{\frac{1}{p}}-1.
\end{equation}
Let $\At_p(I,\XX)$ be the set of all functions from $I$ to $\XX$ that are either $p$-atoms or members of the set
\[
\enbrace{x \, p_j \colon x\in B_\XX, \, 0\le j \le \floor{\frac{1}{p}}-1}.
\]
The atomic Hardy space $H_p^{a}(I,\XX)$ is defined similarly to the atomic dyadic Hardy space, replacing $\At^\delta_p(I,\XX)$ with $\At_p(I,\XX)$.

Coifman and Weiss \cite{CW1977} proved that maximal dyadic spaces and atomic dyadic spaces coincide within the framework of scalar-valued Hardy spaces of index $1$. As we show next, their arguments can be translated to Banach-valued Hardy spaces of index $0<p<1$ despite the fact that, on one hand, local convexity is an essential ingredient which constrains the estimates that can be carried out in those spaces, and, on the other hand, the achievement of vector-valued results may critically depend of the geometry of the target space.

\begin{theorem}\label{thm:Anso}
Let $0<p\le 1$ and $\XX$ be a Banach space. Then $\HL_p^{\delta,m}(I,\XX)=H_p^{\delta,a}(I,\XX)$.
\end{theorem}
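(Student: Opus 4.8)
The plan is to establish the two inclusions $\HL_p^{\delta,m}(I,\XX)\subseteq H_p^{\delta,a}(I,\XX)$ and $H_p^{\delta,a}(I,\XX)\subseteq \HL_p^{\delta,m}(I,\XX)$, each as a bounded inclusion, and then conclude by the open mapping theorem (or directly by the two-sided norm estimate). The easy direction is the atomic-into-maximal inclusion: since both are $p$-Banach spaces built by $p$-convexification, it suffices to check that every $a\in\At_p^\delta(I,\XX)$ has $\norm{\delta(a)}_{H_p^{\delta,m}(I,\XX)}\le C$ with $C$ independent of $a$. For a constant function this is immediate, and for a dyadic $p$-atom $a$ supported on $J$ one splits $(\delta(a))^*$ into its behavior on $J$ and on $I\setminus J$. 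On $J$ one uses the sup bound $\norm{a}_\infty\le\abs{J}^{-1/p}$ together with $\abs{J}$; off $J$ the cancellation \ref{it:atom:cancel} forces $\Ave(\delta(a),K)=0$ for every dyadic $K\supsetneq J$, so the maximal function vanishes outside $J$ entirely. This gives $\norm{\delta(a)}_{H_p^{\delta,m}(I,\XX)}\le C$, and since atoms are $L_1$-functions the image lands in $\HL_p^{\delta,m}(I,\XX)$.

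The substantial direction is the maximal-into-atomic inclusion. Here I would run the vector-valued Calder\'on--Zygmund machinery developed in the excerpt. Given $F\in\HL_p^{\delta,m}(I,\XX)$, normalize by subtracting $\delta(f_0)$ (absorbing the constant part into the constant atoms) and apply the Calder\'on--Zygmund decomposition at the dyadic thresholds $\lambda=2^k$, $k\in\ZZ$. The idea is to write the telescoping sum $F-\delta(f_0)=\sum_{k\in\ZZ}(G_{2^{k+1}}-G_{2^{k}})$, where $G_\lambda$ is the good component; property \ref{it:GBSA} (the $\sigma$-algebras increase) ensures that $G_{2^{k+1}}-G_{2^{k}}$ is a martingale difference built from the Calder\'on--Zygmund intervals, and properties \ref{it:G} and \ref{it:B} give the pointwise bounds $\norm{G_\lambda^*}_\infty\le\lambda$ and $B_\lambda^*\le 2F^*\chi_{I\setminus W_\lambda}$. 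On each Calder\'on--Zygmund interval $D(J)\in\Ct_{2^k}$ the difference $G_{2^{k+1}}-G_{2^{k}}$ restricts to a mean-zero, $L_\infty$-bounded piece supported on $D(J)$, which after rescaling is a dyadic $p$-atom; its coefficient is controlled by $\abs{D(J)}^{1/p}$ times the $L_\infty$-size $\lesssim 2^k$.

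The crux is then the coefficient estimate: I would show
\begin{equation*}
\sum_{k\in\ZZ}\sum_{D(J)\in\Ct_{2^k}} \enpar{2^k \abs{D(J)}^{1/p}}^p \le C \norm{F}_{H_p^{\delta,m}(I,\XX)}^p.
\end{equation*}
By \eqref{eq:VsUs} one has $\sum_{D(J)\in\Ct_{2^k}}\abs{D(J)}\le 2\abs{U_{2^k}}$, so the inner sum is dominated by $2\cdot 2^{kp}\abs{U_{2^k}}$, and summing the geometric layers against the layer-cake formula \eqref{eq:integral} (equivalently \ref{it:VGB}) reproduces $\norm{F}_{H_p^{\delta,m}(I,\XX)}^p$ up to a constant. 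The main obstacle I anticipate is purely vector-valued and non-locally-convex: because $0<p\le 1$ the space is only a $p$-Banach space, so I cannot rearrange or estimate the series by duality or by convexity, and I must verify convergence of $\sum_k(G_{2^{k+1}}-G_{2^k})$ to $F-\delta(f_0)$ directly in the $H_p^{\delta,m}$ quasi-norm. The tail control at $\lambda\to\infty$ comes from \ref{it:VGB} (so $\abs{I\setminus W_\lambda}\to 0$), and the tail at $\lambda\to 0$ uses exactly the hypothesis $F\in\HL_p^{\delta,m}$ (that $\lim_n\delta(f_n)=F$ in quasi-norm), which is where the distinction between $\HL_p^{\delta,m}$ and $H_p^{\delta,m}$ enters and explains why the closure is unavoidable.
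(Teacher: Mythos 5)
Your skeleton coincides with the paper's proof: the easy inclusion is handled exactly as in the paper (cancellation kills the maximal function off $J$, the sup bound controls it on $J$), and for the hard direction the paper likewise telescopes the Calder\'on--Zygmund good parts at thresholds $2^k$, splits each difference into dyadic $p$-atoms supported on the intervals of $\Ct_{2^k}$, and sums the coefficients against \eqref{eq:VsUs} and the layer-cake identity, with the same tail analysis. The genuine gap is that you run this machinery directly on a general $F\in\HL_p^{\delta,m}(I,\XX)$, which is a \emph{martingale}, and treat the pieces $G_{2^{k+1}}-G_{2^k}$ as measurable functions decomposable into atoms. Atoms are by definition strongly measurable functions, whereas the good part $G_\lambda$ of a general martingale is merely a uniformly bounded martingale; without the RNP a uniformly bounded dyadic martingale need not be of the form $\delta(g)$ for any $g\in L_\infty(I,\XX)$ --- this is precisely the martingale characterization of the RNP invoked before Theorem~\ref{thm:density}. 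Worse, you misplace where the hypothesis $F\in\HL_p^{\delta,m}$ enters: both tails of your telescoping series converge for \emph{every} $F\in H_p^{\delta,m}(I,\XX)$, the tail at $\lambda\to\infty$ by \ref{it:VGB}, \ref{it:B} and dominated convergence, and the tail at $\lambda\to 0$ because \ref{it:G} gives $\norm{G_{2^{-m}}^*}_\infty\le 2^{-m}$, with no closure hypothesis needed. So if the rest of your argument were valid for arbitrary $F$, it would prove $H_p^{\delta,m}(I,\XX)=H_p^{\delta,a}(I,\XX)$ for every Banach space $\XX$, contradicting Theorem~\ref{thm:density} whenever $\XX$ fails the RNP, since $H_p^{\delta,a}(I,\XX)$ consists of genuine $L_p$-functions.

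The paper closes this gap by proving the atomic estimate only for $f\in H_p^{\delta,m}(I,\XX)\cap L_1(I,\XX)$: for an integrable function the good part is the honest conditional expectation $g_\lambda=\EE(f,\Sigma_\lambda)$, so each difference $f_{\lambda,\eta}=g_\eta-g_\lambda$ is a bounded function vanishing on $W_\lambda$ with zero mean on every interval of $\Ct_\lambda$, hence splits into atoms, yielding the uniform bound $\norm{f}_{H_p^{\delta,a}(I,\XX)}\le 2^{1/p}\,6\,\norm{f}_{H_p^{\delta,m}(I,\XX)}$; one then extends to all of $\HL_p^{\delta,m}(I,\XX)$ by density, using that $H_p^{\delta,a}(I,\XX)$ is a complete $p$-Banach space embedding contractively into $H_p^{\delta,m}(I,\XX)$. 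Alternatively, you could try to salvage the direct route via the limit-value map $\HL_p^{\delta,m}(I,\XX)\to L_p(I,\XX)$, verifying that for $F\in\HL_p^{\delta,m}$ one has $G_\lambda=\delta(g_\lambda)$ with $g_\lambda=f\chi_{W_\lambda}+\sum_{J\in\Ct_\lambda}\Ave(F,J)\,\chi_J$, where $f$ is the limit function; but that verification is exactly the step your proposal omits, and the density argument is shorter.
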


\begin{proof}
Let $a$ be an $\XX$-valued dyadic $p$-atom associated with an interval $J\in\Dt$. Let $x\in I$, and let $H\in\Dt$ with $x\in H$. If $H\cap J=\emptyset$ or $J\subseteq H$, then $\int_H a=0$. In turn, if $H\subseteq J$, then $x\in J$. Consequently, $\supp(\delta(a)^*)\subseteq J$, whence
\[
\delta(a)^* \le \Vert a\Vert_{L_\infty(I,\XX)} \chi_J \le \abs{J}^{-1/p} \chi_J.
\]
Therefore, $\norm{\delta(a)^*}_{L_p(I)} \le 1$. We infer that, via the martingale transform $\delta$, $H_p^{\delta,a}(I,\XX) \subseteq H_p^{\delta,m}(I,\XX)$, and the embedding is a contraction.

Assume that $f\in H_p^{\delta,m}(I,\XX) \cap L_1(I,\XX)$. If $f$ is constant, then $f=\lambda a$, where $a\in B_\XX$ and $\lambda=\norm{f}_{H_p^{\delta,m}(I,\XX)}$. Thus, we can assume that $\int_I f=0$. For each $\lambda>0$, let $(g_\lambda,b_\lambda)$ be the Calder\'on-Zygmund decomposition, $\Ct_\lambda$ be the collection of Calder\'on-Zygmund intervals, $W_\lambda$ be the Calder\'on-Zygmund measurable set, and $\Sigma_\lambda$ be the Calder\'on-Zygmund $\sigma$-algebra of $f$ at threshold $\lambda$.

Let $0<\lambda<\eta<\infty$. By \ref{it:GBSA},
\[
\EE(f_{\lambda,\eta},\Sigma_\lambda)=0, \mbox{ where }
f_{\lambda,\eta}:=g_\eta-g_\lambda=b_\lambda-b_\eta.
\]
This means that $f_{\lambda,\eta}|_{W_\lambda}=0$ and $\Ave(f_{\lambda,\eta},J)=0$ for all $J\in \Ct_\lambda$. By \ref{it:G}, $\norm{f_{\lambda,\eta}}_{L_\infty(I,\XX)} \le \lambda+\eta$. It follows that there are families $(\lambda_{J}^{\lambda,\eta})_{J\in \Ct_\lambda}$ and $(a_{J}^{\lambda,\eta})_{J\in \Ct_\lambda}$ such that
\begin{itemize}
\item $f_{\lambda,\eta}=\sum_{J\in \Ct_\lambda} \lambda_{J}^{\lambda,\eta} \, a_{J}^{\lambda,\eta}$ and, for each $J\in\Ct_\lambda$,
\item $a_{J}^{\lambda,\eta}$ is a dyadic $p$-atom associated with the interval $J$, and
\item $0 \le \lambda_{J}^{\lambda,\eta} \le (\lambda+\eta) \abs{J}^{1/p}$.
\end{itemize}

Set $\lambda_{J,k}= \lambda_{J}^{2^k,2^{k+1}}$ for all $k\in\ZZ$ and $J\in\Ct_{2^k}$. By \ref{it:VGB},
\[
\sum_{\substack{ k\in\ZZ\\ J\in \Et_{2^k} }} \lambda_{J,k}^p
\le 3^p \sum_{k\in\ZZ} 2^{kp} \abs{I \setminus W_{2^k}}
\le 2 \cdot 6^p \norm{f}_{H_p^{\delta,m}([0,1),\XX)}.
\]
Hence, if $a_{J,k}= a_{J}^{2^k,2^{k+1}}$, the expansions
\[
h:=\sum_{\substack{ k\in\ZZ\\ J\in \Et_{2^k} }} \lambda_{J,k} \, a_{J,k}, \quad h_{m,n}:=\sum_{\substack{ k\in\ZZ, -m \le k \le n\\ J\in \Et_{2^k} }}
\lambda_{J,k} \, a_{J,k},\quad m,\, n\in\NN,
\]
define functions in $H_p^{\delta,a}(I,\XX)$ with
\[
\norm{h}_{H_p^{\delta,a}(I,\XX)}\le 2^{1/p} \, 6 \norm{f}_{H_p^{\delta,m}(I,\XX)} \mbox{ and }
\lim_{\substack{m\to\infty \\ n \to \infty}} h_{m,n}=h
\]
in norm-topology of $H_p^{\delta,a}(I,\XX)$. We have $h_{m,n}=f-b_{2^{n+1}}-g_{2^{-m}}$. By \ref{eq:AverageDoubling}, $\lim_m g_{2^{-m}}=0$ in $L_\infty(I,\XX)$. By \ref{it:VGB}, \ref{it:B}, and Lebesgue's dominated convergence theorem, $\lim_n b_{2^{n}}=0$ in $L_1(I,\XX)$. Summing up, $h=f$.
\end{proof}

$\XX$-valued atomic Hardy spaces and $\XX$-valued maximal Hardy spaces also coincide for an arbitrary Banach space $\XX$. Before showing this we note that an application of the Gram-Schmidt process to the sequence $(p_j)_{j=0}^\infty$ yields operators
\[
Q_\sigma\colon L_1(I,\XX)\to L_\infty(I,\XX), \quad \sigma\in\NN_0,
\]
with
\[
\int_I Q_\sigma (f) \, p_j=0, \quad j,\sigma\in\NN_0, \, j\le \sigma,\, f\in L_1(I,\XX),
\]
and polynomials $\psi_j$, $j\in\NN_0$, of degree $j$ such that
\begin{equation}\label{eq:GSExpansion}
f=\sum_{j=0}^\sigma \enpar{\int_I f\, \psi_j } p_j + Q_\sigma(f),\quad \sigma\in\NN_0, \, f\in L_1(I,\XX).
\end{equation}
Each operator $f\mapsto \int_I f\, \psi_j$ is bounded from $L_1(I,\XX)$ into $\XX$. Besides, if $\LL$ is a K\"othe space over $I$ containing $L_1(I)$, $Q_\sigma$ is and endomorphism of $\LL(\XX)$ for each $\sigma\in\NN_0$. We call $(Q_\sigma)_{\sigma=0}^\infty$ the Gram-Schmidt projections relative to $(p_j)_{j=0}^\infty$.

\begin{theorem}\label{thm:BGC+Mol}
Let $0<p\le 1$ and $\XX$ be a Banach space. Then $\HL_p^m(\TT,\XX)=H_p^a(I,\XX)$.
\end{theorem}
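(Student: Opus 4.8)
The plan is to establish the two continuous inclusions $H_p^a(I,\XX)\subseteq\HL_p^m(\TT,\XX)$ and $\HL_p^m(\TT,\XX)\subseteq H_p^a(I,\XX)$ separately. By Theorem~\ref{thm:BGC} the space $\HL_p^m(\TT,\XX)$ is the closure of $H_p^m(\TT,\XX)\cap L_1(\TT,\XX)$, so for the second inclusion it is enough to manufacture an atomic expansion of an arbitrary $f\in H_p^m(\TT,\XX)\cap L_1(\TT,\XX)$ and then pass to the limit; for the first inclusion, since $H_p^a(I,\XX)$ is obtained from $\At_p(I,\XX)$ by $p$-convexification, it suffices to bound the maximal Hardy quasi-norm of a single generator of $\At_p(I,\XX)$ by an absolute constant.

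First I would dispose of the inclusion $H_p^a(I,\XX)\subseteq\HL_p^m(\TT,\XX)$ by the classical atom estimate, transcribed to the Banach-valued, non-locally convex setting. The monomial generators $x\,p_j$ with $x\in B_\XX$ and $j\le\floor{1/p}-1$ are fixed bounded functions whose Poisson maximal function is bounded, hence they sit in $H_p^m(\TT,\XX)\cap L_1(\TT,\XX)$ with uniformly controlled quasi-norm. For a $p$-atom $a$ supported on an interval $J$ of length $\delta$ I would split $M(a)$ over a neighbourhood of $J$ and over its complement. Near $J$ one combines the $L_2$-boundedness of the Poisson maximal operator with the bound $\norm{a}_{L_2(I,\XX)}\le\delta^{1/2-1/p}$ and H\"older's inequality to control $M(a)$ in $L_p$ by an absolute constant. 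Away from $J$ one exploits the vanishing moments \eqref{it:moments}: subtracting from $s\mapsto P_r(t-s)$ its Taylor polynomial of degree $\floor{1/p}-1$ centred at the midpoint of $J$ annihilates the integral against $a$, and the Taylor remainder, estimated through the size of the derivatives of the Poisson kernel, produces exactly the decay that makes $\int M(a)^p$ over the far region converge -- this is where the prescribed number of moments is matched to $p$. Adding the two contributions gives $\norm{a}_{H_p^m(\TT,\XX)}\le C$, and $p$-convexification upgrades this to a bounded embedding.

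The core of the theorem is the reverse inclusion $\HL_p^m(\TT,\XX)\subseteq H_p^a(I,\XX)$. Fix $f\in H_p^m(\TT,\XX)\cap L_1(\TT,\XX)$ and put $\sigma=\floor{1/p}-1$. Using the Gram-Schmidt expansion \eqref{eq:GSExpansion} I would first split off the polynomial part $\sum_{j=0}^{\sigma}\enpar{\int_I f\,\psi_j}\,p_j$, which feeds the $x\,p_j$ generators of $\At_p(I,\XX)$; its coefficients have to be dominated by $\norm{f}_{H_p^m(\TT,\XX)}$, a point for which the zeroth moment is immediate since $P_0\ast f$ is the constant $\int_I f$ and is therefore pointwise below $M(f)$, while the remaining low moments are reached by integrating by parts against the harmonic extension. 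This reduces the matter to $g:=Q_\sigma(f)$, which has vanishing moments up to order $\sigma$ and still obeys $\norm{g}_{H_p^m(\TT,\XX)}\le C\norm{f}_{H_p^m(\TT,\XX)}$. I would then run a Calder\'on-Zygmund/Whitney decomposition of the level sets $\enbrace{M(g)>2^k}$, $k\in\ZZ$, in the spirit of the dyadic argument behind Theorem~\ref{thm:Anso}: on each Whitney interval one forms a piece of $g$ and subtracts its local polynomial projection -- the same device as the Gram-Schmidt projections applied locally -- so as to install the full set of $\floor{1/p}$ vanishing moments and thereby obtain molecules, the global moment cancellation of $g$ guaranteeing that the correction terms telescope across scales. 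Rewriting each molecule as a superposition of genuine atoms and controlling the coefficients by the measures $\abs{\enbrace{M(g)>2^k}}$, which sum to a multiple of $\norm{g}_{H_p^m(\TT,\XX)}^p$ by the layer-cake identity as in \eqref{eq:integral}, delivers the required expansion $g=\sum_j\lambda_j\,a_j$ with $\sum_j\abs{\lambda_j}^p\le C\norm{g}_{H_p^m(\TT,\XX)}^p$.

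The hard part will be this last inclusion, for three interlocking reasons. First, the absence of local convexity forbids duality and averaging shortcuts and forces every estimate to be carried out in the $p$-quasi-norm, exactly as stressed for the dyadic analogue. Second, the target being $\XX$-valued rules out scalar rearrangement or symmetrization, so one must lean only on the lattice inequalities of Section~\ref{sect:QBLE} and on Bochner integration of the bounded pieces, which is legitimate here because these pieces lie in $L_1(I,\XX)$. Third, and most delicate, is the simultaneous demand of installing $\floor{1/p}$ vanishing moments on each local piece while preserving the size and support control of an atom and ensuring that the local corrections cancel globally; it is precisely the interplay between the Whitney geometry, the local polynomial projections and the global moment vanishing of $g$ that lets the molecules reassemble into $f$. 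Converting the Poisson-maximal information into usable Calder\'on-Zygmund data, by means of the weak-type maximal inequalities already invoked in Theorem~\ref{thm:BGC}, is the remaining technical ingredient.
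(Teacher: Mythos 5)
Your proposal is correct in outline, but it takes a much longer road than the paper does, and the comparison is instructive. The paper's proof is a two-step reduction: it quotes \cite{BGC1987}*{Corollary 3.6}, where Blasco and Garc\'{\i}a-Cuerva already established, in exactly this vector-valued generality, the atomic decomposition of $\HL_p^m(\TT,\XX)$ with one twist --- their exceptional ``atoms'' are the whole unit ball of $L_\infty(I,\XX)$ rather than the monomial generators $x\,p_j$ of $\At_p(I,\XX)$; the only new content is the bridge between the two atom definitions, namely the continuous inclusion $L_\infty(I,\XX)\subseteq H_p^a(I,\XX)$, which drops out of the Gram--Schmidt expansion \eqref{eq:GSExpansion} because $Q_\sigma$ is trivially bounded on $L_\infty(I,\XX)$ and $Q_\sigma(f)$ is then a bounded function with vanishing moments, i.e.\ a multiple of a $p$-atom on all of $I$. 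You instead reprove the decomposition from scratch (your Whitney/molecular scheme is essentially a reconstruction of the Blasco--Garc\'{\i}a-Cuerva argument, and your first inclusion, via positivity of the Poisson kernel reducing $M(a)\le M_{\mathrm{scalar}}(\norm{a(\cdot)}_\XX)$ plus the moment/Taylor cancellation, is sound), which buys self-containedness at the price of considerable technical labour that the literature already covers. One genuine soft spot in your ordering deserves attention: you peel off the polynomial part of $f$ \emph{first}, which forces you to prove $\norm{\int_I f\,\psi_j}_\XX\lesssim\norm{f}_{H_p^m(\TT,\XX)}$ for $1\le j\le\floor{1/p}-1$; your ``integrate by parts against the harmonic extension'' is hand-waving at the duality of $H_p$ with Lipschitz classes, which is itself usually \emph{derived from} the atomic decomposition, so as written you risk circularity (the $j=0$ case via $P_0*f$ is the only immediate one). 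The clean fix is precisely the paper's ordering: run the Calder\'on--Zygmund machine first, let it bottom out at a bounded residual function (this is why the $L_\infty$-ball atoms appear in \cite{BGC1987}), and apply the Gram--Schmidt projection only to that bounded residual, where no $H_p^m$-boundedness of moment functionals is ever needed. With that permutation of steps your argument closes, and it coincides in substance with the proof the paper delegates to the cited corollary.
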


\begin{proof}
Set $\sigma=\floor{1/p}-1$. Blasco and Garc\'{\i}a-Cuerva \cite{BGC1987}*{Corollary 3.6} proved that any function $f\in \HL_p^m(\TT,\XX)$ can be decomposed as a sum of atoms, but they considered functions in the unit ball of $L_\infty(I,\XX)$ as atoms. Knowing this, to finish the proof it suffices to show that $L_\infty(I,\XX)\subseteq H_p^a(I,\XX)$ continuously. Taking into account that the $\sigma$th Gram-Schmidt projection $Q_\sigma$ relative to $(p_j)_{j=0}^\infty$ is a bounded operator on $L_\infty(I,\XX)$, the result is a ready consequence of the expansion \eqref{eq:GSExpansion}.
\end{proof}

\subsection{ Banach-valued Hardy spaces of analytic functions.} The conjugate Poisson kernel $(Q_r)_{0\le r <1}$ is defined by $Q_r(t)=\mathbf{Q}(r e^{2\pi i t})$ for all for $t\in\RR$, where
\[
\mathbf{Q}(z)=\Im \enpar{\frac{1+z}{1-z}}.
\]

Let $0<p\le 1$. Given $f\in \Dt'(\TT,\XX)$ we put

\[
M_{p,f}(r) = \enpar{\int_I \norm{P_r*f}_\XX^p + \norm{Q_r*f}_\XX^p}^{1/p}, \quad 0\le r <1
\]
and
\begin{equation}\label{eq:HpTNorm}
\norm{f}_{H_p(\TT,\XX)}=\sup_{0\le r<1} M_{p,f}(r).
\end{equation}
We then define
\[
H_p(\TT,\XX)=\enbrace{f\in\Dt'(\TT,\XX)\colon \norm{f}_{H_p(\TT,\XX)}<\infty}.
\]
Given a function $F\colon\DD\to \XX$ and $0\le r<1$, let $F_r\colon\TT\to \XX$ be the function
\[
F_r(t)= F\enpar{r e^{2\pi i t}}, \quad t\in\RR.
\]

If $\BB$ is a real Banach space, we denote by $\BB^{\,\CC}$ its complexification. For every $f\in \Dt'(\TT,\BB)$ we set
\[
F_f(z)=P_r*f(t)+i Q_r*f(t)\in\BB^{\,\CC}, \quad z=re^{2\pi i t}\in\DD.
\]
Note that $F_f$ is analytic. Besides, there is a geometric constant $C$ such that if $\XX^{\, \CC}$ is the complexification of $\XX$ regarded as a real Banach space, then
\[
\frac{1}{C} M_{p,f}(r) \le \norm{(F_f)_r}_{\XX^{\, \CC}} \le C M_{p,f}(r), \quad 0\le r<1.
\]

If $\YY$ is a complex Banach space and $F\colon\DD\to\YY$ is analytic, then $\norm{F(f)(\cdot)}_\YY^p$ is subharmonic by the corresponding scalar-valued result (see \cite{GCRF1985}*{Chapter I, Corollary 2.1}) and the Hahn-Banach theorem. Hence, the map
\[
r\mapsto\norm{F_r}_{L_p(\TT,\YY)}
\]
is non-decreasing. These observations yield that if we replace the supremum with the lower limit in \eqref{eq:HpTNorm} we obtain an equivalent quasi-norm.

We denote by $H_{p,e}(\TT,\XX)$ and $H_{p,o}(\TT,\XX)$ and $H_{p,0}\enpar{\TT,\XX^{\,\CC}}$ the subspaces of $H_{p}(\TT,\XX)$ consisting of all even distributions, all odd distributions, and all distributions $f$ with $f(\chi_\RR)=0$, respectively. We also put
\[
H_{p,e,0}(\TT,\XX)=H_{p,e}(\TT,\XX) \cap H_{p,0}\enpar{\TT,\XX}.
\]
The symmetry with respect to the origin, here denoted $\St$, is an isometry on $H_{p}(\TT,\XX)$. Hence, the map
\begin{equation}\label{eq:EvenOdd}
\Pt_{oe}=\enpar{\frac{\Id_{\Dt'(\TT,\XX)}+\St}{2}, \frac{\Id_{\Dt'(\TT,\XX)}-\St}{2}}
\end{equation}
gives the isomorphism
\begin{equation}\label{eq:HpTeo}
H_p(\TT,\XX) \simeq H_{p,e}(\TT,\XX) \oplus H_{p,o}(\TT,\XX).
\end{equation}

If $f\in\Ct^{(\infty)}(\TT,\XX)$, then its conjugate function $\widetilde{f}$ given by
\[
\widetilde{f}\colon\RR\to \XX, \quad t \mapsto\lim_{r\to 1^-} Q_r*f(t)
\]
belongs to $\Ct^{(\infty)}(\TT,\XX)$ as well. This fact allows us to define $\Ct(f):=\widetilde{f}$ for $f\in\Dt'(\TT,\XX)$. The mere definition gives that $\Ct$ is an automorfism of $H_{p,0}(\TT,\XX)$ . Besides, it maps $H_{p,e,0}(\TT,\XX)$ onto $H_{p,e}(\TT,\XX)$.

Bourgain \cite{Bourgain1982} stated that $H_1(\TT,\ell_2)$ does not complementably embed in $H_1(\TT)$ and proved that $\Rad(H_1^\delta(I))$ does not complementably embed in $H_1^\delta(I)$. This makes us guess that he was aware of the validity of the following result in the case when $p=1$.

\begin{proposition}\label{prop:HTRad}
Let $0<p\le 1$ and $(\varepsilon_n)_{n=1}^\infty$ be a sequence of Rademacher random variables in a probability space $(\Omega,P)$. Then the map
\[
c_{00}(H_p(\TT)) \to L_1(\Omega,P), \quad (f_n)_{n=1}^\infty \mapsto \sum_{n=1}^\infty f_n \,\varepsilon_n
\]
extends to an isomorphism from $H_p(\TT,\ell_2)$ onto $\Rad(H_p(\TT))$.
\end{proposition}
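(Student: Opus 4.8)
The plan is to reduce the vector-valued statement to the lattice-theoretic machinery already in place, in particular Proposition~\ref{prop:RadL2}, by realizing $H_p(\TT)$ as an $L$-convex, $L$-concave K\"othe space. First I would transport the analytic Hardy space to one of the spaces of boundary functions identified in Section~\ref{sect:VVHE}. Using the equivalence of the quasi-norm \eqref{eq:HpTNorm} with the Poisson maximal quasi-norm together with the chain Theorem~\ref{thm:BGC}--Theorem~\ref{thm:BGC+Mol} (and Theorem~\ref{thm:Anso} on the dyadic side), the scalar space $H_p(\TT)$ is isomorphic to an atomic/maximal Hardy space that carries a concrete lattice structure. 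The key point is that in the scalar case this model space, e.g. $H_p^{\delta,a}(I)=\HL_p^{\delta,m}(I)$, can be viewed through its Littlewood--Paley square-function description: via the Haar expansion recalled in the Introduction, $\norm{f}$ is comparable to $\norm{(\sum_n |\lambda_n|^2|h_n|^2)^{1/2}}_{L_p}$, which exhibits $H_p^\delta(I)$ as (isomorphic to) a K\"othe function space on which $v_2$ behaves like a genuine pointwise square function.

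Next I would verify that this K\"othe space $\LL$ modelling $H_p(\TT)$ is both $L$-convex and $L$-concave. Since $\LL$ is $p$-Banach it is locally $p$-convex, hence $L$-convex; concavity should follow because the square-function norm satisfies a lower $r$-estimate for suitable $r$, so Theorem~\ref{thm:ConcaveInterval} applies and $\LL$ is $s$-concave for $s$ large. With $\LL$ an $L$-concave K\"othe space, Proposition~\ref{prop:RadL2} gives directly an isomorphism
\[
\LL(\ell_2)\simeq\Rad(\LL),\qquad (x_n)_{n=1}^\infty\mapsto\sum_{n=1}^\infty x_n\,\varepsilon_n.
\]
It then remains to match the two sides with the objects in the statement: on the domain side, $\LL(\ell_2)\simeq H_p(\TT,\ell_2)$ because the vector-valued identifications of Section~\ref{sect:VVHE} (which hold for arbitrary Banach target, in particular for $\ell_2$) commute with the scalar ones, so the $\ell_2$-valued Hardy space is exactly the $\ell_2$-valued K\"othe space $\LL(\ell_2)$; on the range side, $\Rad(\LL)\simeq\Rad(H_p(\TT))$ because $\Rad$ is an isomorphism invariant built from the same probability space.

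The main obstacle I anticipate is not the abstract lattice step but the \emph{compatibility} step: one must check that the chosen isomorphism $H_p(\TT)\simeq\LL$ tensors correctly with $\ell_2$ and with $\Rad$, i.e.\ that the single map $(f_n)\mapsto\sum_n f_n\varepsilon_n$ is simultaneously the Rademacher embedding for $\LL$ and realizes the Hardy-space isomorphism, so that the diagram
\[
\begin{array}{ccc}
H_p(\TT,\ell_2) & \longrightarrow & \Rad(H_p(\TT))\\
\big\updownarrow & & \big\updownarrow\\
\LL(\ell_2) & \longrightarrow & \Rad(\LL)
\end{array}
\]
commutes up to equivalence of quasi-norms. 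Concretely, this requires that the vector-valued Hardy quasi-norm of $\sum_n x_n\varepsilon_n$ (an element of $H_p(\TT,\ell_2)$ after identifying $\Omega$-randomness with the $\ell_2$ coordinates) be comparable to the $\Rad(H_p(\TT))$ quasi-norm, which is precisely a Khintchine-type statement inside $H_p(\TT)$; I would deduce it from Lemma~\ref{lem:MaureyQB} applied to the $L$-concave lattice $\LL$, using that the equivalence constants there do not depend on the number of terms. Once this tensoring is in place, the two isomorphisms compose and the proposition follows.
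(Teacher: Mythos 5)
Your reduction has a genuine circularity at the step you yourself flag as ``the main obstacle.'' The left vertical arrow of your diagram, $H_p(\TT,\ell_2)\simeq\LL(\ell_2)$ with $\LL=\TL_{p,2}$, is not available from the identifications of Section~\ref{sect:VVHE}. Tracing them with target $\ell_2$: Theorems~\ref{thm:BlascoHardy}, \ref{thm:BGC} and \ref{thm:BGC+Mol} give $H_p(\TT,\ell_2)\simeq H_p^{a}(I,\ell_2)$, while Theorem~\ref{thm:Anso}, Corollary~\ref{cor:squareMD}, \eqref{eq:squarediscrete} and \eqref{eq:TLIso} give $\TL_{p,2}(\ell_2)\simeq H_p^{\delta,a}(I,\ell_2)$; what is missing is exactly the vector-valued Maurey--Carleson--Wojtaszczyk isomorphism $H_p^{a}(I,\ell_2)\simeq H_p^{\delta,a}(I,\ell_2)$, which the paper only has in scalar form \eqref{eq:WojIso} and explicitly identifies as the open step. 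A scalar isomorphism $H_p(\TT)\simeq\LL$ does \emph{not} automatically tensor with $\ell_2$, and in this paper the statement that it does (Theorem~\ref{thm:HpDTL}) is \emph{deduced from} Proposition~\ref{prop:HTRad} together with Proposition~\ref{prop:RadL2} and the $\Rad$-invariance you invoke --- so you are assuming a result downstream of the one you are proving. Your proposed fix via Lemma~\ref{lem:MaureyQB} cannot close the gap: applied to the $L$-concave lattice $\LL$ it yields the equivalence between $\Rad(\LL)$ and the lattice square-function norm of $\LL$, i.e.\ it re-derives Proposition~\ref{prop:RadL2}, but it says nothing about comparing that lattice norm with the Poisson-extension quasi-norm \eqref{eq:HpTNorm} defining $H_p(\TT,\ell_2)$; that comparison is precisely the analytic content of the proposition.

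The paper's proof is instead direct and self-contained, and this is no accident: it is designed to bypass the missing vector-valued isomorphism. For $f=(f_n)_{n=1}^\infty\in c_{00}(H_p(\TT))$ one applies the scalar Khintchine inequality pointwise in the torus variable and Fubini's theorem to show that $\sup_{0\le r<1}H(r)\approx\norm{f}^p_{H_p(\TT,\ell_2)}$, where $H(r)$ averages over the Rademacher signs; then, for fixed $\omega$, the pointwise equivalence of $\abs{A}^p+\abs{B}^p$ with $(\abs{A}^2+\abs{B}^2)^{p/2}$ identifies the inner quantity with the $L_p$-mean at radius $r$ of the analytic function $F_{g_\omega}$, $g_\omega=\sum_n\varepsilon_n(\omega)f_n$; subharmonicity of $\abs{F_{g_\omega}}^p$ makes these means non-decreasing in $r$, so the monotone convergence theorem lets one exchange $\sup_r$ with $\int_\Omega$ and arrive at $\int_\Omega\norm{g_\omega}^p_{H_p(\TT)}\,dP(\omega)$, the $\Rad(H_p(\TT))$ quasi-norm. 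If you wish to salvage your lattice-theoretic route, you would first need an independent proof that $H_p(\TT,\ell_2)\simeq\TL_{p,2}(\ell_2)$ --- a vector-valued extension of Wojtaszczyk's theorem --- which is a strictly harder result than the proposition itself.
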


\begin{proof}
Let $f=(f_n)_{n=1}^\infty\in c_{00}(H_p(\TT))$. For each $r\in[0,1)$ put
\[
H(r)=\int_I \int_\Omega \enpar{\abs{ \sum_{n=1}^\infty \varepsilon_n(\omega) P_r*f_n(t)}^p +
\abs{ \sum_{n=1}^\infty \varepsilon_n(\omega) Q_r*f_n(t)}^p} dP(\omega) dt.
\]
By Khintchine's inequalities, there is a constant $C$ so that
\[
\frac{1}{C} \norm{f}_{H_p(\TT,\ell_2)}^p \le \sup_{0\le r<1} H(r) \le C \norm{f}_{H_p(\TT,\ell_2)}^p.
\]
Now, for $r\in[0,1)$ and $\omega\in\Omega$ define
\[
H_1(\omega,r)=\int_I \enpar{\abs{ \sum_{n=1}^\infty \varepsilon_n(\omega) P_r*f_n(t)}^p +
\abs{ \sum_{n=1}^\infty \varepsilon_n(\omega) Q_r*f_n(t)}^p} dt\]
and
\[
H_2(\omega,r)=\enpar{\int_I \enpar{\abs{ \sum_{n=1}^\infty \varepsilon_n(\omega) P_r*f_n(t)}^2 +
\abs{ \sum_{n=1}^\infty \varepsilon_n(\omega) Q_r*f_n(t)}^2} dt}^{p/2}.
\]
There is another constant $C_0$ such that $H_1\le C_0 H_2$ and $H_2\le C_0 H_1$.
Since, by Fubini's theorem, $H(r) = \int_\Omega H_1(\omega,r)\, dP(\omega)$,
\[
\frac{1}{C C_0} \norm{f}_{H_p(\TT,\ell_2)}^p \le \sup_{0<r<1} \int_\Omega H_2(\omega,r) \, dP(\omega) \le C C_0 \norm{f}_{H_p(\TT,\ell_2)}^p.
\]
$H_2(\omega,\cdot)$ is non-decreasing for each $\omega\in\Omega$, and
\[
\lim_{r\to 1^-} H_2(\omega,r)= \norm{ \sum_{n=1}^\infty \varepsilon_n(\omega) \, f_n}^p_{H_p(\TT)}.
\]
By the monotone convergence theorem,
\[
\sup_{0<r<1} \int_\Omega H_2(\omega,r)\, dP(\omega) = \int_\Omega \norm{ \sum_{n=1}^\infty \varepsilon_n(\omega) \, f_n}^p_{H_p(\TT)} dP(\omega).\qedhere
\]
\end{proof}

The spaces $H_p(\TT,\XX)$ and $H_p^m(\TT,\XX)$ were shown to be the same by Blasco under the assumption that $\XX$ is an UMD space. Recall that a Banach space $\XX$ is said to be a UMD space if there is some $q\in(1,\infty)$ (equivalently, for all $q\in(1,\infty)$) such that the differences of martingales are unconditional sequences in $L_q(I,\XX)$, i.e., there is a constant $\beta$ such that
\[
\norm{ \sum_{n=0}^m \varepsilon_n \enpar{f_n-f_{n-1}}}_{L_q(\mu,\XX)} \le \beta \norm{f_m}_{L_q(\mu,\XX)},
\]
for all martingales $(f_n)_{n=0}^\infty$ over any measure space $(\Omega,\Sigma,\mu)$, all sequences $(\varepsilon_n)_{n=0}^\infty$ of scalars of modulus at most one, and all $m\in\NN_0$, with the convention $f_{-1}=0$.

\begin{theorem}[\cite{Blasco1988}*{Theorem 3.2}]\label{thm:BlascoHardy}
Let $\XX$ be a Banach space and $0<p\le 1$. Then $H_p(\TT,\XX)=H_p^m(\TT,\XX)$, if and only if $\XX$ is a UMD space.
\end{theorem}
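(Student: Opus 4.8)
The plan is to reduce the equality of the two quasi-norms to a single analytic fact—the boundedness of the conjugate function operator—and then to identify that boundedness with the UMD property via the Hilbert-transform characterization of Burkholder and Bourgain. The first step, valid for an arbitrary Banach space $\XX$, is to establish the inclusion $H_p(\TT,\XX)\hookrightarrow H_p^m(\TT,\XX)$. Given $f\in H_p(\TT,\XX)$ I would pass to its analytic extension $F_f\colon\DD\to\XX^{\,\CC}$, for which $\norm{f}_{H_p(\TT,\XX)}\approx\sup_{0\le r<1}\norm{(F_f)_r}_{L_p(\TT,\XX^{\,\CC})}$. Since $\norm{F_f}_{\XX^{\,\CC}}^p$ is subharmonic (as already recorded in the excerpt via Hahn--Banach and the scalar result), the scalar Hardy--Littlewood maximal theorem applied to a harmonic majorant yields $\norm{\sup_r\norm{(F_f)_r}_{\XX^{\,\CC}}}_{L_p}\lesssim\sup_r\norm{(F_f)_r}_{L_p}$, with a constant independent of $\XX$. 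As $\norm{P_r*f(t)}_\XX\le\norm{(F_f)_r(t)}_{\XX^{\,\CC}}$ pointwise, this gives $\norm{f}_{H_p^m(\TT,\XX)}=\norm{M(f)}_{L_p}\lesssim\norm{f}_{H_p(\TT,\XX)}$. Hence the entire content of the theorem is the reverse inclusion $H_p^m(\TT,\XX)\subseteq H_p(\TT,\XX)$.

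For the implication that $\XX$ being UMD forces this reverse inclusion, I would exploit that UMD spaces are reflexive and therefore have the RNP: by Theorem~\ref{thm:BGC} and Theorem~\ref{thm:BGC+Mol} we then have $H_p^m(\TT,\XX)=\HL_p^m(\TT,\XX)=H_p^a(I,\XX)$, so it suffices to bound $\norm{a}_{H_p(\TT,\XX)}$ uniformly over $\XX$-valued $p$-atoms $a$ associated with intervals $J$, the general case following from the $p$-convexity of the atomic norm. The $H_p(\TT,\XX)$-norm of $a$ is controlled through the conjugate Poisson part, i.e.\ through the conjugate function $\Ct(a)$. I would split $I$ into a fixed dilate $J^{*}\supseteq J$ and its complement. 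On $J^{*}$, the normalization $\norm{a}_{L_\infty(I,\XX)}\le\abs{J}^{-1/p}$ together with the $L_q(\TT,\XX)$-boundedness of the Hilbert transform (the very definition of UMD) and Hölder's inequality controls $\norm{\Ct(a)}_{L_p(J^{*},\XX)}$ by an absolute constant, since $\abs{J^{*}}^{1/p-1/q}\norm{a}_{L_q}\approx\abs{J}^{1/p-1/q}\abs{J}^{1/q-1/p}=1$. Off $J^{*}$, the vanishing moments of $a$ up to order $\floor{1/p}-1$ paired with the smoothness and decay of the conjugate Poisson kernel $(Q_r)$ yield a pointwise tail estimate that is $p$-summable over $I\setminus J^{*}$; this tail part is purely scalar and needs no hypothesis on $\XX$. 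Summing the two contributions bounds $\norm{a}_{H_p(\TT,\XX)}$ by a constant depending only on $p$ and the UMD constant.

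For the converse, suppose the two quasi-norms are equivalent. Then $H_p^m(\TT,\XX)\subseteq H_p(\TT,\XX)$ says exactly that the conjugate Poisson maximal part is dominated by the Poisson maximal function, so the conjugate function operator $\Ct$ is bounded on $H_p^m(\TT,\XX)$ (recall from the excerpt that $\Ct$ already acts as an automorphism of $H_{p,0}$ and carries $H_{p,e,0}$ onto $H_{p,e}$). I would then transfer this $p\le 1$ bound to an $L_q$ bound: trigonometric polynomials lie in $H_p^m(\TT,\XX)$ with norm comparable to an appropriate $L_q$-norm, and a duality/interpolation argument upgrades the conjugation estimate to boundedness of the Hilbert transform on $L_q(\TT,\XX)$ for some $q\in(1,\infty)$, which by the Burkholder--Bourgain theorem is equivalent to $\XX$ being UMD. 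Equivalently, arguing contrapositively, if $\XX$ fails UMD the unboundedness of the vector Hilbert transform on $L_q(\TT,\XX)$ lets one manufacture test functions that are small in $H_p^m(\TT,\XX)$ yet large in $H_p(\TT,\XX)$.

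The hard part will be the atomic estimate in the UMD $\Rightarrow$ equality direction: unlike in the scalar Calderón--Zygmund theory, the \emph{local} portion of the conjugate function of a vector-valued atom cannot be handled by size and cancellation of the kernel alone and genuinely requires the $L_q(\XX)$-boundedness of the Hilbert transform, which is precisely where UMD is indispensable. The other delicate point is the transference in the converse implication, namely moving a conjugation bound phrased in the non-locally convex regime $p\le 1$ back to a bound on $L_q(\TT,\XX)$ with $q>1$; here the interplay between the atomic/maximal descriptions of $H_p^m(\TT,\XX)$ and the Lebesgue scale must be used carefully.
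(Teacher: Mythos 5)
First, a point of comparison: the paper does not prove this statement at all. It is imported verbatim from Blasco \cite{Blasco1988}*{Theorem 3.2}, so your proposal can only be measured against the cited source and the standard literature, not against an in-paper argument. Your two positive steps are essentially the standard proof and are sound in outline. One small repair is needed in the first step: majorizing the subharmonic function $\norm{F_f}_{\XX^{\,\CC}}^p$ by the Poisson integral of a finite measure and invoking the Hardy--Littlewood maximal theorem only yields a \emph{weak-type} conclusion ($M(f)\in L_{p,\infty}$, not $L_p$). The classical fix is to run the argument with the subharmonic function $\norm{F_f}_{\XX^{\,\CC}}^{p/2}$, whose harmonic majorant is the Poisson integral of an $L_2$ function, and then use the $L_2$-boundedness of $M_{HL}$. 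Your UMD $\Rightarrow$ equality direction---reflexivity, RNP, Theorem~\ref{thm:BGC}, Theorem~\ref{thm:BGC+Mol}, then a uniform bound on atoms with the local piece handled by Burkholder's half of the Hilbert-transform characterization (via $Q_r*a=P_r*\Ct(a)$ and H\"older) and the tail by moment cancellation against scalar kernel estimates---is exactly how this implication is proved.

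The genuine gap is in the converse. Your reduction is fine up to ``$\Ct$ is bounded,'' but the assertion that trigonometric polynomials lie in $H_p^m(\TT,\XX)$ with norm comparable to an appropriate $L_q$-norm is false: since $p<1$ is the relevant regime, what the hypothesis actually delivers, via Theorem~\ref{thm:BGC}(i), is $\norm{f}_{H_p^m(\TT,\XX)}\lesssim \norm{f}_{L_1(\TT,\XX)}$, hence only a vector-valued Kolmogorov inequality $\norm{\Ct(f)}_{L_p(\TT,\XX)}\lesssim\norm{f}_{L_1(\TT,\XX)}$. Upgrading this to $L_q(\TT,\XX)$-boundedness of the Hilbert transform cannot be done by ``duality/interpolation'': $L_p(\TT,\XX)$ with $p<1$ has trivial dual, and Marcinkiewicz interpolation requires a second endpoint estimate you do not possess. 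The known routes are (a) a Nikishin--Stein maximal principle for the translation-invariant operator $\Ct$, improving the $L_1\to L_p$ bound to weak type $(1,1)$, followed by Bourgain's theorem that weak $(1,1)$ (equivalently $L_q$) boundedness of the vector Hilbert transform characterizes UMD; or (b) the contrapositive via Bourgain's transference, which converts martingale differences witnessing failure of UMD into lacunary trigonometric sums. In route (b) your sketch also leaves unaddressed the fact that $L_q$-largeness of $\Ct(g)$ for a single test function does not imply $L_p$-largeness when $p<1$; one must concatenate suitably normalized independent copies to defeat this. As written, your final paragraph names the right theorem to finish with but omits the mechanism that legitimizes the passage from the non-locally convex regime $p\le 1$ back to the Lebesgue scale, and that mechanism is the entire content of the ``only if'' direction.
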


Given a complex Banach space $\XX$ and $0<p\le 1$ we define $H_p(\DD,\XX)$ as the quasi-Banach space of all $F\in\Ht(\DD,\XX)$ such that
\[
\norm{F}_{H_p(\DD,\XX)}:=\sup_{0\le r <1} \norm{F_r}_{L_p(\TT,\XX)}<\infty.
\]
These spaces are perhaps the most important and classic of all Hardy spaces. To relate them with other Hardy spaces, we should realize that functions in $H_p(\DD,\XX)$ define distributions in $\Dt'(\TT,\XX)$. According to the authors of \cite{BGC1987} this can be done by adapting the method used in \cite{GCRF1985} for scalar-valued Hardy spaces over $\RR$. Here we present an argument that better suits the particularities of the class of Hardy spaces defined on the torus.

Recall that, roughly speaking, the Banach envelope of a quasi-Banach space $\XX$ is the `closest' Banach space to $\XX$. To give a precise definition, we use a universal property. We say that a Banach space $\widehat{\XX}$ is the Banach envelope of $\XX$ via a map $J\colon \XX\to \widehat{\XX}$ if $J$ is a linear contraction and for any Banach space $\BB$ and any linear contraction $T\colon\XX\to \BB$ there is a linear contraction $S\colon \widehat{\XX}\to \BB$ such that $S\circ J=T$. For more details on Banach envelopes of quasi-Banach spaces, we refer the unfamiliar reader to \cite{AAW2021c}.

\begin{lemma}\label{lem:DualBergman}
Let $0<p\le 1$ and $\XX$ be a complex Banach space. For any $F\in H_p(\DD,\XX)$ there is $\Gamma(F)\in\Dt'(\TT,\XX)$ such that
\begin{equation}\label{eq:DualBergman}
\Gamma(F)(g)=\lim_{s\to 1^-} \int_{I} F_s\, g, \quad g\in \Ct^{(\infty)}(\TT,\CC).
\end{equation}
Moreover, $\Gamma$ is a continuous linear map.
\end{lemma}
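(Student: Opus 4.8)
The plan is to reduce everything to the Taylor expansion of $F$ and to control its coefficients. Write $F(z)=\sum_{n=0}^\infty a_n z^n$ with $a_n\in\XX$, which is legitimate since $F\in\Ht(\DD,\XX)$. For fixed $0\le s<1$ the series $F_s(t)=\sum_n a_n s^n e^{2\pi i n t}$ converges absolutely and uniformly on $\{\abs{z}=s\}$ by analyticity, so for $g\in\Ct^{(\infty)}(\TT,\CC)$ with Fourier coefficients $\widehat{g}(k)=\int_I g(t)\, e^{-2\pi i k t}\,dt$ I may integrate term by term to obtain
\[
\int_I F_s\, g=\sum_{n=0}^\infty a_n\, s^n\, \widehat{g}(-n).
\]
Thus the whole lemma hinges on one quantitative input: a polynomial bound $\norm{a_n}_\XX\le C_p\,(n+1)^{1/p}\,\norm{F}_{H_p(\DD,\XX)}$. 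Granting this, the smoothness of $g$ forces $\widehat{g}(-n)$ to decay faster than any power of $n$, so $\sum_n\norm{a_n}_\XX\,\abs{\widehat{g}(-n)}<\infty$ uniformly in $s\in[0,1]$, and the remaining assertions will follow from dominated convergence applied to the series.

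The coefficient estimate is the heart of the matter, and it is where I would concentrate the work. As recorded in the excerpt, $\norm{F(\cdot)}_\XX^p$ is subharmonic on $\DD$: writing $\norm{F(z)}_\XX=\sup_{\phi\in B_{\XX^*}}\abs{\phi(F(z))}$ with each $\phi\circ F$ scalar analytic, $\abs{\phi\circ F}^p$ is subharmonic and a supremum of subharmonic functions is subharmonic. Applying the sub-mean-value inequality over the disc $B(z,\rho)\subseteq\DD$ of radius $\rho=(1-\abs{z})/2$, and bounding the resulting area integral in polar coordinates by a constant multiple of $\rho\,\norm{F}_{H_p(\DD,\XX)}^p$ (using $\int_I\norm{F_\tau}_\XX^p\le\norm{F}_{H_p(\DD,\XX)}^p$ on each circle), yields the pointwise growth estimate
\[
\norm{F(z)}_\XX\le C_p\,(1-\abs{z})^{-1/p}\,\norm{F}_{H_p(\DD,\XX)},\quad z\in\DD.
\]
Cauchy's formula $a_n=\frac{1}{2\pi i}\oint_{\abs{z}=r}z^{-n-1}F(z)\,dz$ then gives $\norm{a_n}_\XX\le r^{-n}\max_{\abs{z}=r}\norm{F(z)}_\XX$ for every $0<r<1$; choosing $r=n/(n+1)$ produces the desired bound $\norm{a_n}_\XX\le C_p\,(n+1)^{1/p}\,\norm{F}_{H_p(\DD,\XX)}$.

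Finally I would assemble the pieces. The uniform summability established above lets me pass to the limit inside the series, so
\[
\Gamma(F)(g):=\lim_{s\to 1^-}\int_I F_s\,g=\sum_{n=0}^\infty a_n\,\widehat{g}(-n)\in\XX,
\]
which both proves existence of the limit in \eqref{eq:DualBergman} and exhibits $\Gamma(F)$ as a well-defined map on $\Ct^{(\infty)}(\TT,\CC)$. The estimate
\[
\norm{\Gamma(F)(g)}_\XX\le C_p\,\norm{F}_{H_p(\DD,\XX)}\sum_{n=0}^\infty (n+1)^{1/p}\,\abs{\widehat{g}(-n)}
\]
shows that $\Gamma(F)$ is continuous, since the series on the right is a continuous seminorm on $\Ct^{(\infty)}(\TT,\CC)$ (dominated by a Sobolev-type norm of $g$); hence $\Gamma(F)\in\Dt'(\TT,\XX)$. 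Linearity of $\Gamma$ is immediate from the formula, and the same inequality, being linear in $\norm{F}_{H_p(\DD,\XX)}$ and uniform over $g$, delivers the continuity of $\Gamma$. I expect the only genuinely delicate point to be the growth estimate; once the power $(1-\abs{z})^{-1/p}$ is in hand, the rest is bookkeeping.
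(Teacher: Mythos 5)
Your proof is correct, and it takes a genuinely different route from the paper's. You work directly at the level of $H_p$: subharmonicity of $\norm{F(\cdot)}_\XX^p$ yields the pointwise growth bound $\norm{F(z)}_\XX\le C_p(1-\abs{z})^{-1/p}\norm{F}_{H_p(\DD,\XX)}$, Cauchy's formula with $r=n/(n+1)$ converts this into the polynomial coefficient bound $\norm{a_n}_\XX\lesssim (n+1)^{1/p}\norm{F}_{H_p(\DD,\XX)}$, and the superpolynomial decay of $\widehat{g}$ for $g\in\Ct^{(\infty)}(\TT,\CC)$ does the rest; every step checks out (two cosmetic remarks: taking the supremum over $\phi\in B_{\XX^*}$ of the sub-mean-value inequalities for $\abs{\phi\circ F}^p$ lets you bypass the assertion that a supremum of subharmonic functions is subharmonic, which needs upper semicontinuity—true here by continuity of $F$, but better avoided; and your exponent can be sharpened to the classical $\norm{a_n}_\XX\lesssim (n+1)^{1/p-1}\norm{F}_{H_p(\DD,\XX)}$ by combining the sup-bound on the circle $\abs{z}=s$ with $\int_I\norm{F_s}_\XX^p\le\norm{F}_{H_p(\DD,\XX)}^p$ before optimizing in $s$, though your cruder bound amply suffices). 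The paper argues quite differently: for $0<p<1$ it identifies the Banach envelope of $H_p(\DD,\XX)$ with the Bergman space $B_{1,p}(\DD,\XX)$ (Michalak--Nawrocki), passes to an $n$-fold antiderivative $H$ with $n\ge 1/p-1$, which lies in $H_\infty(\DD,\XX)\subseteq H_1(\DD,\XX)$ by a lemma of Blasco, and moves the derivatives onto the test function; the case $p=1$ is then handled using only that the Taylor coefficients are bounded together with $\abs{\hat\varphi(n)}\lesssim n^{-2}\norm{\varphi''}_{L_1(\TT)}$. What each approach buys: yours is elementary and self-contained, requiring no envelope machinery and no external citations; the paper's reduction shows more, namely that $\Gamma$ is continuous for the strictly weaker Bergman-envelope norm, i.e., it factors through $\widehat{H_p(\DD,\XX)}=B_{1,p}(\DD,\XX)$, a structurally stronger conclusion in keeping with the envelope viewpoint the paper develops. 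At bottom both proofs exploit the same mechanism—polynomial growth of Taylor coefficients played against rapidly decreasing Fourier coefficients of the test function—but they extract the coefficient control from different sources.
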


\begin{proof}
If $0<p<1$, the Banach envelope of $H_p(\DD,\XX)$ is (via the inclusion map) the vector-valued Bergman space $B_{1,p}(\DD,\XX)$, which consists of all $f\in \Ht(\DD,\XX)$ such that
\[
\int_\DD \norm{f(x+iy)}_\XX \enpar{1-\sqrt{x^2+y^2}}^{1/p-2} \, dx\, dy<\infty
\]
(see \cite{MN2002}). Let $n\in\NN_0$ be such that $n \ge 1/p-1$. Given $F\in B_{1,p}(\DD,\XX)$, let $H\in\Ht(\DD)$ be such that $H^{(n)}=F$ and $H^{(k)}(0)=0$ for $0\le k \le n-1$. Since
\[
\int_{I} F_s g =(2\pi i)^n s^n \int_I H_s g^{(n)},
\]
in order to prove the result for $F$, it suffices to prove it for $H$. It is known (see \cite{Blasco1990}*{Lemma 1.2 and Proposition 1.1}) that $H\in H_\infty(\DD,\XX)$. Since $H_\infty(\DD,\XX)\subseteq H_1(\DD,\XX)$, it suffices to prove the result for $p=1$.

If $F\in H_1(\DD,\XX)$ its Taylor coefficients $\alpha:=(a_n)_{n=0}^\infty$ constitute a sequence in $\ell_\infty(\XX)$. Assume that $F(0)=0$.
Given $\varphi\in\Ct^{(\infty)}(\TT,\CC)$, its Fourier coefficients $(\hat{\varphi}(n))_{n\in\ZZ}$ satisfy
\[
\abs{\hat{\varphi}(n)} \le \frac{1}{4\pi ^2 n^2} \norm{\varphi''}_{L_1(\TT)}, \quad n\not=0.
\]
Therefore,
\[
\lim_{s\to 1^-} \int_{I} F_s \varphi=f(\varphi):=\sum_{n=0}^\infty a_n \hat{\varphi}(n),
\]
and $\abs{f(\varphi)} \le \norm{\alpha}_{\ell_\infty(\XX)} \norm{\varphi''}_{L_1(\TT)} /24$.
\end{proof}

\begin{lemma}\label{lem:ev0}
Let $0<p\le 1$ and $\XX$ be a complex Banach space. Then, the evaluation map
\[
F\mapsto F(0)
\]
is bounded from $H_{p}(\DD,\XX)$ into $\XX$.
\end{lemma}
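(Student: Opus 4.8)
The plan is to show that the evaluation functional $F \mapsto F(0)$ is controlled by the $H_p(\DD,\XX)$-quasi-norm. The most natural route is to exploit the subharmonicity already recorded in the excerpt. For a complex Banach space $\XX$ and $F \in \Ht(\DD,\XX)$, the scalar function $z \mapsto \norm{F(z)}_\XX^p$ is subharmonic (this is exactly the fact used just above to conclude that $r \mapsto \norm{F_r}_{L_p(\TT,\XX)}$ is non-decreasing). By the sub-mean-value property of subharmonic functions applied at the center of the disc,
\[
\norm{F(0)}_\XX^p \le \int_I \norm{F_r(t)}_\XX^p \, dt = \norm{F_r}_{L_p(\TT,\XX)}^p
\]
for every $0 \le r < 1$. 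Taking the supremum over $r$ on the right-hand side gives
\[
\norm{F(0)}_\XX \le \norm{F}_{H_p(\DD,\XX)},
\]
which is the desired bound with constant $1$. Thus $F \mapsto F(0)$ is a linear contraction from $H_p(\DD,\XX)$ into $\XX$.

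The linearity of the map is immediate, so the only content is the quasi-norm estimate, and the argument above reduces it to a single invocation of the sub-mean-value inequality. I would first record explicitly that $\norm{F(\cdot)}_\XX^p$ is subharmonic on $\DD$; since $\XX$ is a complex Banach space and $F$ is analytic, one composes $F$ with an arbitrary norming functional $x^* \in \XX^*$ to get a scalar analytic function $x^* \circ F$, whose modulus raised to the power $p$ is subharmonic by the scalar theory cited in \cite{GCRF1985}*{Chapter I, Corollary 2.1}; taking the supremum over $x^* \in B_{\XX^*}$ preserves subharmonicity and recovers $\norm{F(\cdot)}_\XX^p$. This is the same Hahn--Banach device already used in the paragraph preceding the statement, so it can be invoked rather than re-derived.

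The step I expect to require the most care is not conceptual but a matter of matching conventions: the sub-mean-value property must be applied with the correct normalization of arc-length so that the average of $\norm{F_r}_\XX^p$ over $\TT$ is literally $\int_I \norm{F_r}_\XX^p$, which is consistent here because $I = [-1/2,1/2)$ has length one and the paper normalizes $\TT$ to have total mass one. Once this normalization is in place there is no genuine obstacle; the bound holds with the sharp constant $1$, and no appeal to the distributional boundary values from Lemma~\ref{lem:DualBergman}, nor any geometric assumption on $\XX$ beyond being a complex Banach space, is needed.
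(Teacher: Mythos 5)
Your proof is correct, and it takes a genuinely different route from the paper's. The paper deduces the lemma in one line from Lemma~\ref{lem:DualBergman}: the boundary-distribution map $\Gamma$ is already known to be bounded, and $F(0)=\Gamma(F)(\chi_\RR)$, since $\lim_{s\to 1^-}\int_I F_s=F(0)$ by the mean value property. You instead argue directly from the sub-mean-value inequality for $\norm{F(\cdot)}_\XX^p$, obtained by the Hahn--Banach device the paper itself records just before this lemma; your normalization check is right ($I$ has length one), and the sharp constant $1$ comes out. Two remarks. First, your argument can be streamlined: the one-point inequality $\norm{F(0)}_\XX^p\le\int_I\norm{F_r(t)}_\XX^p\,dt$ needs no claim that a supremum of subharmonic functions is subharmonic (which in general requires upper semicontinuity) --- just apply the scalar sub-mean-value inequality to each $x^*\circ F$ with $x^*\in B_{\XX^*}$, bound $\abs{x^*(F_r(t))}\le\norm{F_r(t)}_\XX$ under the integral, and take the supremum over $x^*$ on the left. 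Second, the lemma is in fact immediate from the definition alone: since $F_0\equiv F(0)$ and the quasi-norm is $\sup_{0\le r<1}\norm{F_r}_{L_p(\TT,\XX)}$ with $r=0$ included, one has $\norm{F(0)}_\XX=\norm{F_0}_{L_p(\TT,\XX)}\le\norm{F}_{H_p(\DD,\XX)}$ with no subharmonicity at all. What the paper's route buys is not the bound itself but the identity $F(0)=\Gamma(F)(\chi_\RR)$, which links evaluation at the origin to the boundary distribution and is exploited later (e.g., in the proof of Proposition~\ref{prop:toroMD}, where $S_e$ is expressed through $\Gamma$); what your route buys is self-containedness and the explicit constant, at no cost in hypotheses on $\XX$.
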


\begin{proof}
It follows from Lemma~\ref{lem:DualBergman}. Indeed, $F(0)=\Gamma(F)(\chi_\RR)$.
\end{proof}
If we think of vectors in $\XX$ as constant functions, the evaluation maps become projections. Hence, Lemma~\ref{lem:ev0} gives that $H_{p}(\DD,\XX)$ is isomorphic to $\XX\oplus H_{p,0}(\DD,\XX)$, where
\[
H_{p,0}(\DD,\XX)=\enbrace{ F \in H_{p}(\DD,\XX) \colon F(0)=0}.
\]
Let us write down other elementary isomorphisms involving $H_{p}(\DD,\XX)$. The map
\[
F(z)\mapsto F(-z)
\]
defines an automorphism of $H_p(\DD, \XX)$. Thefefore, if $H_{p,e}(\DD,\XX)$ and $H_{p,o}(\DD,\XX)$, denote the subspaces of $H_p(\DD,\XX)$ consisting of all even and all odd functions respectively,
\begin{equation}\label{eq:DiscEvenOdd}
H_p(\DD,\XX) \simeq H_{p,e}(\DD,\XX)\oplus H_{p,o}(\DD,\XX).
\end{equation}
In turn, the map
\[
F(z)\mapsto F(z^2)
\]
defines an isometry from $H_p(\DD, \XX)$ onto $H_{p,o}(\DD,\XX)$. Finally, the map
\[
F(z)\mapsto z F(z)
\]
defines an isometry from $H_{p}(\DD,\XX)$ onto $H_{p,0}(\DD,\XX)$ which restricts to an isometry from $H_{p,e}(\DD,\XX)$ onto $H_{p,o}(\DD,\XX)$. Therefore, using \eqref{eq:DiscEvenOdd},
\begin{equation}\label{eq:HpDSquare}
H_{p}(\DD,\XX) \simeq H_{p,o}(\DD,\XX) \oplus H_{p,o}(\DD,\XX)\simeq H_p(\DD,\XX) \oplus H_p(\DD,\XX).
\end{equation}

We say that a complex Banach space is \emph{regular} if it is isomorphic to the complexification of a real Banach space $\XX$.

\begin{lemma}\label{lem:torodisco}
Let $0<p\le 1$ and $\XX$ be a real Banach space. Then the map
\[
f \mapsto \Lambda(f) \colon \DD \to \XX^{\,\CC}, \quad \Lambda(f)(z)= P_r*f(t) +i Q_r*f(t) , \quad z=re^{2\pi i t}\in\DD,
\]
defines a (real) isomorphism from $H_p(\TT,\XX)$ onto
\[
\HH:=\enbrace{ F \in H_p\enpar{\DD,\XX^{\,\CC}} \colon \Im(F(0))=0}
\]
whose inverse is $\Re(\Gamma)$, where $\Gamma$ is as in Lemma~\ref{lem:DualBergman}.
\end{lemma}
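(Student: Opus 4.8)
The plan is to show that $\Lambda$ and $\Re(\Gamma)$ are mutually inverse bounded linear maps between $H_p(\TT,\XX)$ and $\HH$. First I would verify that $\Lambda$ actually lands in $\HH$ and is bounded. For $f\in H_p(\TT,\XX)$, the function $\Lambda(f)=F_f$ is exactly the analytic function into $\XX^{\,\CC}$ introduced earlier in the subsection, so analyticity is already established; the two-sided estimate $\frac1C M_{p,f}(r)\le \norm{(F_f)_r}_{\XX^{\,\CC}}\le C M_{p,f}(r)$ recorded just above gives, after taking the supremum over $0\le r<1$, that $\Lambda$ is bounded with $\norm{\Lambda(f)}_{H_p(\DD,\XX^{\,\CC})}\simeq \norm{f}_{H_p(\TT,\XX)}$. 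The membership in $\HH$ is a pointwise check: at $z=0$ we have $r=0$, so $P_0*f=f(\chi_\RR)$ (the zeroth Fourier datum) is the relevant real part while $Q_0=0$ forces $\Im(\Lambda(f)(0))=0$.

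Next I would analyze $\Re(\Gamma)$ on $\HH$. By Lemma~\ref{lem:DualBergman}, $\Gamma$ is a bounded linear map from $H_p(\DD,\XX^{\,\CC})$ into $\Dt'(\TT,\XX^{\,\CC})$, so its real part is bounded into $\Dt'(\TT,\XX)$; the content is that $\Re(\Gamma)(F)$ in fact lies in $H_p(\TT,\XX)$ with controlled norm when $F\in\HH$. The key computation is to identify the Poisson and conjugate-Poisson averages of the boundary distribution $\Re(\Gamma)(F)$ in terms of $F_s$. Using the reproducing formula \eqref{eq:DualBergman} together with the fact that the Poisson and conjugate Poisson kernels reconstruct, respectively, the real and imaginary parts of an analytic function, one recovers $P_r*\Re(\Gamma)(F)=\Re(F_r)$ and $Q_r*\Re(\Gamma)(F)=\Im(F_r)$ up to the normalization at the origin; then $M_{p,\Re(\Gamma)(F)}(r)\simeq \norm{F_r}_{\XX^{\,\CC}}$, so taking the supremum bounds $\norm{\Re(\Gamma)(F)}_{H_p(\TT,\XX)}$ by $\norm{F}_{H_p(\DD,\XX^{\,\CC})}$.

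Finally I would check the two compositions are the identity. For $\Re(\Gamma)\circ\Lambda=\Id$ on $H_p(\TT,\XX)$: applying $\Gamma$ to $F_f$ and using \eqref{eq:DualBergman} recovers the boundary distribution of $F_f$, whose real part is $f$ since $P_r*f\to f$ and taking real parts kills the conjugate part $iQ_r*f$. For $\Lambda\circ\Re(\Gamma)=\Id$ on $\HH$: starting from $F\in\HH$, forming $\Re(\Gamma)(F)$ and reinserting it through the Poisson/conjugate-Poisson integrals reconstructs $F$ because an analytic $\XX^{\,\CC}$-valued function is determined by the boundary real part together with the normalization $\Im(F(0))=0$; this is precisely where the constraint defining $\HH$ is used, as it pins down the otherwise-free imaginary constant.

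The step I expect to be the main obstacle is the boundary identification in the second paragraph, namely rigorously justifying that the distributional boundary values of the analytic function $F$ satisfy $P_r*\Re(\Gamma)(F)=\Re(F_r)$ and $Q_r*\Re(\Gamma)(F)=\Im(F_r)$. The subtlety is that for $0<p<1$ the boundary object exists only as a distribution, so one cannot argue pointwise a.e.; instead I would test against $g\in\Ct^{(\infty)}(\TT,\CC)$, use \eqref{eq:DualBergman} to write $\Gamma(F)(g)=\lim_{s\to1^-}\int_I F_s\,g$, and exploit that convolution with $P_r$ (resp.\ $Q_r$) corresponds on the Fourier side to radial dilation by $r$ on the analytic (resp.\ conjugate-analytic) part, which commutes with the limit $s\to1^-$ by the continuity of $\Gamma$ and the smoothness of $g$. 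Once this commutation is justified, the norm equivalences and the inverse relations follow by the already-available estimates, and the real-linear isomorphism is established.
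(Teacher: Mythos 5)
Your proposal is correct and follows essentially the same route as the paper: the recorded two-sided estimate for $M_{p,f}(r)$ together with $Q_0=0$ makes $\Lambda$ an isomorphic embedding into $\HH$, and the paper implements your key boundary identification exactly as you sketch in your final paragraph, via the dilation identity $F(sz)=P_r*\Re(F_s)(t)+iQ_r*\Re(F_s)(t)$ (valid for each fixed $s<1$ because $\Im(F(0))=0$) and then letting $s\to 1^-$, where the definition of $\Gamma$ as a distributional limit of the smooth functions $F_s$ lets convolution with the smooth kernels $P_r$ and $Q_r$ commute with the limit. The only cosmetic difference is that you verify both compositions, whereas the paper checks only $\Lambda(\Re(\Gamma(F)))=F$ on $\HH$ and deduces the rest from the injectivity of the embedding.
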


\begin{proof}
Using just the definitions of the spaces yields that $\Lambda$ is an isomorphic embedding from $H_p(\TT,\XX)$ into $\HH$, so we need only see that $G:=\Lambda(\Re(\Gamma(F)))=F$ for all $F\in \HH$. To that end we note that
\[
F(sz)= P_r*\Re(F_s)(t)+i Q_r*\Re(F_s)(t), \quad z=re^{2\pi i t}\in\DD, \, 0\le s<1.
\]
Hence,
\[
G(z)=\lim_{s\to 1^{-}} P_r*\Re(F_s)(t)+i Q_r*\Re(F_s)(t)=\lim_{s\to 1^-} F(sz)=F(z).\qedhere
\]
\end{proof}

Lemma~\ref{lem:torodisco} is optimal from a harmonic analysis point of view as it opens the door to real-analysis criteria to determine whether an analytic function is a member of $H_p(\DD,\XX)$. However, from the perspective of functional analysis it could be claimed that it does not identify a (complex) Banach space to which $H_p\enpar{\DD,\XX^{\,\CC}}$ is isomorphic. Our next result tells us how to obtain complex isomorphisms using Lemma~\ref{lem:torodisco}.

\begin{proposition}\label{prop:toroMD}
Let $0<p\le 1$ and $\YY$ be a regular complex Banach space.
\begin{enumerate}[label=(\roman*), leftmargin=*, widest=ii]
\item\label{it:HLW} The map $S_e$ that assigns to each monomial $z\mapsto x z^n$ the trigonometric monomial $t\mapsto x \cos(2\pi n t)$, $n\in\NN_0$, $x\in\YY$, extends to an isomorphism from $H_p\enpar{\DD,\YY}$ onto $H_{p,e}\enpar{\TT,\YY}$.
\item\label{it:HLWE} The map $S_o$ that assigns to each monomial $z\mapsto x z^n$ the trigonometric monomial $t\mapsto x \sin(2\pi n t)$, $n\in\NN$, $x\in\YY$, extends to an isomorphism from $H_{p,0}\enpar{\DD,\YY}$ onto $H_{p,o}\enpar{\TT,\YY}$.
\end{enumerate}
\end{proposition}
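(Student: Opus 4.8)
The plan is to realize $S_e$ as a bi-bounded incarnation of the formal inverse of the harmonic--conjugate map $\Lambda$ of Lemma~\ref{lem:torodisco}, and then to deduce part~\ref{it:HLWE} from part~\ref{it:HLW} via the conjugate-function operator $\Ct$. First I would record the action of $\Lambda$ on Fourier data: writing $f$ through its coefficients and using $\widehat{P_r}(k)=r^{\abs{k}}$ and $\widehat{Q_r}(k)=-i\sign(k)r^{\abs{k}}$, one gets $\Lambda(f)(z)=\hat f(0)+2\sum_{k\ge 1}\hat f(k)\,z^{k}$, so that $\Lambda(x\cos(2\pi n\,\cdot))=x z^{n}$ and $\Lambda(x\sin(2\pi n\,\cdot))=-i\,xz^{n}$. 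The same formula makes sense verbatim for the complex space $\YY$ and defines a map $\Lambda\colon H_p(\TT,\YY)\to H_p(\DD,\YY)$ whose restriction to even functions is the monomial-level inverse of $S_e$. The boundedness of this $\Lambda$ is the easy half and needs no regularity: since $\norm{P_r*f+iQ_r*f}_\YY\le \norm{P_r*f}_\YY+\norm{Q_r*f}_\YY$ and $(a+b)^p\le a^p+b^p$ for $0<p\le 1$, one has $\norm{(\Lambda f)_r}_{L_p(\TT,\YY)}\le M_{p,f}(r)$, so $\Lambda$ is in fact a contraction.

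The crux is the boundedness of $S_e$, and here regularity enters. I would write $\YY=\XX^{\,\CC}$ and, given $F\in H_p(\DD,\YY)$, consider the reflection--conjugation $\widetilde F(z):=\overline{F(\bar z)}$. This is again analytic, and $\norm{\widetilde F}_{H_p(\DD,\YY)}=\norm{F}_{H_p(\DD,\YY)}$ because conjugation is isometric on $\XX^{\,\CC}$ and $t\mapsto -t$ preserves the $L_p$-norm on $\TT$. Hence $F_1:=\tfrac12(F+\widetilde F)$ and $F_2:=\tfrac1{2i}(F-\widetilde F)$ are analytic functions in $H_p(\DD,\YY)$ with all Taylor coefficients in $\XX$, each of norm $\lesssim\norm{F}_{H_p(\DD,\YY)}$, and $F=F_1+iF_2$. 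Each $F_j$ then lies in the space $\HH$ of Lemma~\ref{lem:torodisco}, so $\Re(\Gamma(F_j))=\Lambda^{-1}(F_j)\in H_p(\TT,\XX)$; since $F_j$ has $\XX$-valued coefficients this preimage is precisely the even function $S_e(F_j)$, whence $\norm{S_e(F_j)}_{H_p(\TT,\XX)}\lesssim\norm{F}_{H_p(\DD,\YY)}$. Because $S_e$ is $\CC$-linear, $S_e(F)=S_e(F_1)+iS_e(F_2)$, and as $\XX\hookrightarrow\XX^{\,\CC}$ isometrically this gives $\norm{S_e(F)}_{H_{p,e}(\TT,\YY)}\lesssim\norm{F}_{H_p(\DD,\YY)}$. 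With $S_e$ and $\Lambda|_{H_{p,e}(\TT,\YY)}$ both bounded and mutually inverse on the dense span of the monomials, $S_e$ is the desired isomorphism, establishing~\ref{it:HLW}.

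For part~\ref{it:HLWE} I would avoid repeating the argument and instead use the conjugate function. On monomials $\Ct\circ S_e=S_o$, since $\Ct(x\cos(2\pi n\,\cdot))=x\sin(2\pi n\,\cdot)$ for $n\ge 1$ and $\Ct$ annihilates constants. Restricting the isomorphism $S_e$ to $H_{p,0}(\DD,\YY)=\{F\colon F(0)=0\}$ yields an isomorphism onto $H_{p,e,0}(\TT,\YY)$; and since $\Ct$ is an automorphism of $H_{p,0}(\TT,\YY)$ with $\Ct^{2}=-\Id$ there, which interchanges even and odd parts, it restricts to an isomorphism of $H_{p,e,0}(\TT,\YY)$ onto $H_{p,o}(\TT,\YY)$. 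Therefore $S_o=\Ct\circ S_e|_{H_{p,0}(\DD,\YY)}$ is an isomorphism.

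I expect the genuine obstacle to be the crux step of the second paragraph: cleanly keeping apart the two complex structures in play --- the scalar field used in the Poisson/Fourier analysis and the internal complex structure of $\YY=\XX^{\,\CC}$ --- so that the operation $F\mapsto\widetilde F$ really produces summands with $\XX$-valued coefficients to which Lemma~\ref{lem:torodisco} applies, and in verifying that the preimage $\Re(\Gamma(F_j))$ is exactly $S_e(F_j)$ and not some other $\Lambda$-preimage. Everything else (the Fourier computation, the contraction estimate, and the passage through $\Ct$) is routine once this identification is in place.
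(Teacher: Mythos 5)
Your proposal is correct in substance, but it organizes the argument differently from the paper, even though both proofs rest on the same two pillars, Lemma~\ref{lem:DualBergman} and Lemma~\ref{lem:torodisco}. Where you split on the disc side --- using the anti-linear involution $\widetilde F(z)=\overline{F(\bar z)}$ to write $F=F_1+iF_2$ with $\XX$-valued Taylor coefficients and then feeding each piece to $\Re\circ\Gamma$ --- the paper stays on the torus side: it writes $S_e=\tfrac12\enpar{\Gamma+\St\circ\Gamma}$, uses the identity $\widetilde{\Re(\Gamma(F))}=\Im(\Gamma(F))$ to obtain $S_e(F)=T_e(\Re(\Gamma(F)))$ with $T_e=\tfrac12\enpar{\Id+\St+i\,\Ct+i\,\St\circ\Ct}$, and then factors $T_e=(\Id,\Ct)\circ\Pt_{oe}$ (via $\Ct\circ\St=-\St\circ\Ct$) to exhibit $T_e$ as a real isomorphism from $H_{p,0}(\TT,\XX)$ onto $H_{p,e,0}\enpar{\TT,\XX^{\,\CC}}$. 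The paper also reverses your logical order: it proves the even, mean-zero statement once and deduces both \ref{it:HLW} and \ref{it:HLWE} from it, whereas you prove \ref{it:HLW} in full and then get \ref{it:HLWE} by composing with $\Ct$; your derivation through $\Ct$ is sound and matches the facts the paper records ($\Ct$ is an automorphism of $H_{p,0}$, anticommutes with $\St$, and maps cosines to sines). Your disc-side splitting buys a transparent reduction to the real-coefficient case covered by Lemma~\ref{lem:torodisco} and keeps complex linearity in view throughout; the paper's operator-algebra factorization buys surjectivity for free, since each factor of $T_e$ is a known isomorphism.

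That last point is the one step you should firm up. You conclude by saying $S_e$ and $\Lambda$ are ``mutually inverse on the dense span of the monomials,'' but while density of polynomials in $H_p(\DD,\YY)$ (via $F_s\to F$ as $s\to 1^-$) is standard and yields $\Lambda\circ S_e=\Id$ on all of $H_p(\DD,\YY)$, density of the even trigonometric polynomials in $H_{p,e}(\TT,\YY)$ is established nowhere and is the nontrivial half of the claim; without it, your argument so far only shows that $S_e$ is an isomorphic embedding with left inverse $\Lambda$. The fix stays entirely within your framework: the two involutions intertwine, namely $\widetilde{\Lambda(f)}=\Lambda\enpar{\overline{\St(f)}}$ because $P_r$ is even and $Q_r$ is odd, so for an even $f=f_1+if_2$ with $f_j$ the $\XX$-valued parts one gets $(\Lambda f)_j=\Lambda(f_j)\in\HH$; Lemma~\ref{lem:torodisco} then gives $\Re(\Gamma(\Lambda(f_j)))=f_j$, hence $S_e(\Lambda(f))=f_1+if_2=f$ for \emph{every} $f\in H_{p,e}(\TT,\YY)$, which is the missing surjectivity. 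Two minor caveats are harmless: if $\YY$ is only isomorphic to a complexification, conjugation is bi-bounded rather than isometric (and, as in the paper, one may simply assume $\YY=\XX^{\,\CC}$); and the restriction of $S_e$ in your part \ref{it:HLWE} lands onto $H_{p,e,0}(\TT,\YY)$ precisely because $\Gamma(F)(\chi_\RR)=F(0)$, i.e., Lemma~\ref{lem:ev0}, which is also the first ingredient the paper's proof makes explicit.
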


\begin{proof}
Without loss of generality we may assume that $\YY=\XX^{\,\CC}$ for some real Banach space $\XX$. Once we have made clear that evaluating a function at zero defines a bounded operator on $H_p(\DD,\YY)$, and that the conjugation $\Ct$ and the symmetry $\St$ are bounded operators on $H_p(\TT,\XX)$, the proof will be mere linear algebra. Since $\Ct$ maps cosine functions into sine functions, to prove \ref{it:HLWE} it suffices to show that the linear map $\St_e$ is an isomorphism from $H_{p,0}\enpar{\DD,\YY}$ onto $H^m_{p,e,0}\enpar{I,\YY}$. Besides, this isomorphism would also yield \ref{it:HLW}.

Let $\Gamma$ be as in Lemma~\ref{lem:DualBergman}. Then
\[
S_e= \frac{\Gamma +\St\circ \Gamma} {2}.
\]
Given $F\in H_{p,0}\enpar{\DD,\YY}$ we have $\widetilde{\Re(\Gamma(F))}=\Im(\Gamma(F))$, whence, if
\[
T_e=\frac{\Id_{\Dt'(\TT,\XX^{\,\CC})}+\St+ i \Ct +i \St \circ \Ct}{2},
\]
$S_e(F)=T_e( \Re(\Gamma(F)))$ for all $F\in H_{p,0}\enpar{\DD,\YY}$. Since $ \Ct\circ \St=-\St \circ \Ct$, if we identify $\Dt'(\TT,\XX^{\,\CC})$ with $\Dt'(\TT,\XX)\oplus \Dt'(\TT,\XX)$ and $\Pt_{oe}$ is as in \eqref{eq:EvenOdd}, the restriction to $\Dt'(\TT,\XX)$ of $T_e$ is given by
\[
T_e= \enpar{\Id_{\Dt'(\TT,\XX)}, \Ct}\circ \Pt_{oe}.
\]
Therefore, $T_e$ is a real isomorphism from $H_{p,0}\enpar{\TT,\XX}$ onto $H_{p,e,0}\enpar{\TT,\XX^{\,\CC}}$. Applying Lemma~\ref{lem:torodisco} puts an end to the proof.
\end{proof}

\begin{corollary}\label{cor:HTRad}
Let $\YY$ be a regular complex Banach space and let $0<p\le 1$. Then $H_p(\DD,\YY) \simeq H_p(\TT,\YY)$.
\end{corollary}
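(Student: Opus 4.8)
The plan is to combine the even/odd decompositions with the isomorphisms established in Proposition~\ref{prop:toroMD}. The key observation is that both $H_p(\DD,\YY)$ and $H_p(\TT,\YY)$ split into even and odd pieces, and Proposition~\ref{prop:toroMD} gives us a handle on how the disc and torus versions of these pieces relate. First I would invoke the torus-side decomposition \eqref{eq:HpTeo}, which tells us that
\[
H_p(\TT,\YY) \simeq H_{p,e}(\TT,\YY) \oplus H_{p,o}(\TT,\YY).
\]
On the disc side, I would use the decomposition \eqref{eq:DiscEvenOdd}, namely $H_p(\DD,\YY) \simeq H_{p,e}(\DD,\YY)\oplus H_{p,o}(\DD,\YY)$.

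Next I would identify each summand using Proposition~\ref{prop:toroMD}. Part~\ref{it:HLW} directly gives $H_p(\DD,\YY)\simeq H_{p,e}(\TT,\YY)$, which already identifies the even torus piece with the full disc space. For the odd pieces, Proposition~\ref{prop:toroMD}\ref{it:HLWE} gives $H_{p,0}(\DD,\YY)\simeq H_{p,o}(\TT,\YY)$. To connect $H_{p,0}(\DD,\YY)$ back to the full disc space, I would use the isometry $F(z)\mapsto zF(z)$ from $H_p(\DD,\YY)$ onto $H_{p,0}(\DD,\YY)$ recorded just before \eqref{eq:HpDSquare}, yielding $H_{p,o}(\TT,\YY)\simeq H_p(\DD,\YY)$ as well. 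Assembling these,
\[
H_p(\TT,\YY)\simeq H_{p,e}(\TT,\YY)\oplus H_{p,o}(\TT,\YY)\simeq H_p(\DD,\YY)\oplus H_p(\DD,\YY).
\]

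Finally I would close the loop using the disc-side self-sum identity \eqref{eq:HpDSquare}, which asserts precisely that $H_p(\DD,\YY)\oplus H_p(\DD,\YY)\simeq H_p(\DD,\YY)$. Chaining this with the previous display gives $H_p(\TT,\YY)\simeq H_p(\DD,\YY)$, which is the claim. The whole argument is essentially a bookkeeping exercise in stringing together isomorphisms already proved, so there is no serious analytic obstacle; the only point requiring care is making sure the parity conventions line up, i.e.\ that the summand produced by Proposition~\ref{prop:toroMD}\ref{it:HLW} is genuinely the \emph{even} torus space while \ref{it:HLWE} handles the \emph{odd} one, and that the regularity hypothesis on $\YY$ (needed to apply Proposition~\ref{prop:toroMD}) is in force throughout. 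The main thing to verify is simply that the displayed chain of equivalences is internally consistent, after which the result follows immediately.
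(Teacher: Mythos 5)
Your proposal is correct and is essentially identical to the paper's own proof, which reads in full: ``Just combine Proposition~\ref{prop:toroMD}, \eqref{eq:HpDSquare} and \eqref{eq:HpTeo}.'' You have simply made explicit the same chain of isomorphisms the authors intend, including the isometry $F(z)\mapsto zF(z)$ identifying $H_p(\DD,\YY)$ with $H_{p,0}(\DD,\YY)$ that the paper records just before \eqref{eq:HpDSquare}.
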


\begin{proof}
Just combine Proposition~\ref{prop:toroMD}, \eqref{eq:HpDSquare} and \eqref{eq:HpTeo}.
\end{proof}

\subsection{ Banach-valued square dyadic Hardy spaces}
The square-function characterization of scalar-valued dyadic Hardy spaces is due to Burkholder and Gundy \cite{BG1970}. Here we will use a vector-valued extension obtained by Tozoni \cite{Tozoni1995}.

Suppose that $\XX$ is a Banach lattice. Given $f=(f_n)_{n=0}^\infty\in\Mt(I,\XX)$, put
\[
S_m(f)\colon I \to \XX, \quad S_m(f)= \enpar{\sum_{n=0}^m \abs{f_n-f_{n-1}}^2}^{1/2}, \quad m\in\NN_0,
\]
with the convention that $f_{-1}=0$, and define
\begin{equation}\label{eq:squarenorm}
\norm{f}_{H_p^{\delta,s}(I,\XX)}:= \sup_{m\in\NN_0} \norm{S_m(f)}_{L_p(I,\XX)}\in[0,\infty].
\end{equation}
Since $(S_m(f))_{m=0}^\infty$ is a non-decreasing sequence of positive vectors in the quasi-Banach lattice $L_p(I,\XX)$, the sequence $(\norm{S_m(f)}_{L_p(I,\XX)})_{m=0}^\infty$ is a non-decreasing, so we could replace the supremum with the limit in \eqref{eq:squarenorm}.
The \emph{$\XX$-valued square dyadic Hardy space of index $p$} will be the quasi-Banach space
\[
H_p^{\delta,s}(I,\XX)=\enbrace{ f \in \Mt(I,\XX) \colon \norm{f}_{H_p^{\delta,s}(I,\XX)}<\infty}.
\]

\begin{theorem}[see \cite{Tozoni1995}*{Theorem 5.1}]\label{thm:Tozoni}
Let $0<p\le 1$ and $\XX$ be a UMD Banach lattice. Then there is a constant $C$ such that
\[
\frac{1}{C} \norm{f}_{H_p^{\delta,m}(I,\XX)} \le \norm{f}_{H_p^{\delta,s}(I,\XX)} \le C \norm{f}_{H_p^{\delta,m}(I,\XX)}
\]
for all $f\in \Mt(I,\XX)$.
\end{theorem}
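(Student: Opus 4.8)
The assertion is the vector-valued Burkholder--Gundy equivalence, and the plan is to first settle the estimate at a single exponent $q\in(1,\infty)$ using the UMD hypothesis together with the concave Maurey estimate of Lemma~\ref{lem:MaureyQB}, and then to transfer it to the quasi-Banach range $0<p\le1$ by a good-$\lambda$ argument. Write $d_n=f_n-f_{n-1}$ for the martingale differences, $f^\ast=\sup_n\norm{f_n}_\XX$ for the (scalar) maximal function, and $S(f)=\enpar{\sum_n\abs{d_n}^2}^{1/2}$ for the lattice square function, so that $\norm{f}_{H_p^{\delta,m}(I,\XX)}=\norm{f^\ast}_{L_p(I)}$ and $\norm{f}_{H_p^{\delta,s}(I,\XX)}=\norm{S(f)}_{L_p(I,\XX)}$, and $\norm{S(f)}_\XX$ denotes the pointwise $\XX$-norm of $S(f)$.

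At a fixed $q\in(1,\infty)$ I would argue as follows. Doob's maximal inequality applied to the scalar submartingale $(\norm{f_n}_\XX)_n$ gives $\norm{f^\ast}_{L_q(I)}\approx\norm{f_\infty}_{L_q(I,\XX)}$. On the other hand, since $\XX$ is a UMD space the sign transforms $f\mapsto\sum_n\varepsilon_n d_n$ are bounded on $L_q(I,\XX)$, uniformly over $\varepsilon_n=\pm1$, and applying the transform twice recovers $f_\infty$; hence $\norm{\sum_n\varepsilon_n d_n}_{L_q(I,\XX)}\approx\norm{f_\infty}_{L_q(I,\XX)}$ uniformly in $\varepsilon$. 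Now $L_q(I,\XX)$ is a Banach lattice which, because UMD spaces have finite cotype, is $L$-concave; so Lemma~\ref{lem:MaureyQB}\ref{lem:MaureyQB:Concave} identifies the Rademacher average $\Ave_{\varepsilon}\enpar{\norm{\sum_n\varepsilon_n d_n}_{L_q(I,\XX)};r}$ with the lattice square-function norm $\norm{S(f)}_{L_q(I,\XX)}$. Averaging the previous equivalence over the signs therefore yields
\[
\norm{S(f)}_{L_q(I,\XX)}\approx\norm{f_\infty}_{L_q(I,\XX)}\approx\norm{f^\ast}_{L_q(I)},
\]
which is the theorem at the single exponent $q$ and, more importantly, supplies the $L_q$-local control needed below.

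To reach $0<p\le1$ I would prove the two good-$\lambda$ inequalities
\[
\abs{\enbrace{f^\ast>\beta\lambda,\ \norm{S(f)}_\XX\le\delta\lambda}}\le\psi(\delta)\,\abs{\enbrace{f^\ast>\lambda}}
\]
together with its mirror image in which the roles of $f^\ast$ and $\norm{S(f)}_\XX$ are exchanged, where $\beta>1$ is fixed and $\psi(\delta)\to0$ as $\delta\to0$. Each is established by stopping the martingale at the first dyadic level at which $f^\ast$ (respectively $\norm{S(f)}_\XX$) exceeds $\lambda$ and estimating the increment on every stopping interval by means of the fixed-exponent equivalence of the previous paragraph, applied to the stopped martingale. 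Integrating both inequalities against $d(\lambda^p)$, which is licit for every $0<p<\infty$, then gives $\norm{f^\ast}_{L_p(I)}\approx\norm{S(f)}_{L_p(I,\XX)}$, i.e.\ the desired equivalence for $0<p\le1$.

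The crux, and the step I expect to be the main obstacle, is the good-$\lambda$ localization. In the scalar theory the local estimate rests on the orthogonality of martingale differences; here no inner product is available, and its role is taken over jointly by the UMD hypothesis and the lattice Khintchine inequality (Theorem~\ref{lem:KintchineLattice}), packaged into the $L_q$-equivalence above. One must further check that the constants generated by the stopping-time decomposition remain uniform as $p\downarrow0$, where local convexity is lost; this is precisely where the quasi-Banach lattice machinery of Section~\ref{sect:QBLE}---the convexity/concavity interval theorems and Maurey's lemma---keeps the relevant averages under control.
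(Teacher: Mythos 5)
A point of order first: the paper does not prove this statement. Theorem~\ref{thm:Tozoni} is imported verbatim from \cite{Tozoni1995}*{Theorem 5.1} and used as a black box, so there is no internal proof to compare yours against; the comparison is with the literature. That said, your two-step scheme is the standard route to the vector-valued Burkholder--Gundy equivalence and is essentially sound. The fixed-exponent step is correct: for a martingale with finitely many nonzero differences, Doob's inequality applied to the submartingale $(\norm{f_n}_\XX)_n$ together with the involutivity of the sign transforms gives that the Rademacher average of $\norm{\sum_n\varepsilon_n d_n}_{L_q(I,\XX)}$ is uniformly comparable to $\norm{f^*}_{L_q(I)}$; and since a UMD lattice has finite cotype, hence is $s$-concave for some $s<\infty$, the Banach lattice $L_q(I,\XX)$ is $\max\{q,s\}$-concave, so the average is identified with $\norm{S(f)}_{L_q(I,\XX)}$. (Here you do not actually need the quasi-Banach Lemma~\ref{lem:MaureyQB}: everything at exponent $q>1$ is locally convex, and the classical result \cite{LinTza1979}*{Theorem 1.d.6} suffices; the functional calculus in $L_q(I,\XX)$ is computed pointwise, so $L_2(\varphi)$ really is $\norm{S(f)}_{L_q(I,\XX)}$.) The good-$\lambda$ localization you flag as the crux does go through for the dyadic filtration by the original Burkholder--Gundy mechanism: the stopping-time jumps are absorbed because $\abs{d_n}\le S(f)$ in the lattice order and $\norm{d_n}_\XX\le 2f^*$ pointwise, so on each good set the restarted, stopped martingale has pointwise square-function (respectively maximal-function) control of size $O(\delta\lambda)$, and Chebyshev with your fixed-$q$ equivalence closes the estimate; one should run the argument for finite martingales and pass to the limit, since both quasi-norms are monotone suprema of their truncations.

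One quibble: your closing worry about constants degenerating as $p\downarrow 0$, and the claimed need for the Section~\ref{sect:QBLE} machinery there, is misplaced. The good-$\lambda$ inequalities are $p$-independent, and all genuinely local estimates live in $L_q$ with $q>1$. The $p$-dependence enters only through the harmless integration against $p\lambda^{p-1}\,d\lambda$: from $\abs{\enbrace{f^*>\beta\lambda}}\le \psi(\delta)\abs{\enbrace{f^*>\lambda}}+\abs{\enbrace{\norm{S(f)}_\XX>\delta\lambda}}$ one gets $(\beta^{-p}-\psi(\delta))\norm{f^*}_{L_p}^p\le \delta^{-p}\norm{S(f)}_{L_p(I,\XX)}^p$, and since $\beta^{-p}\ge\beta^{-1}$ for $p\le 1$, a single choice $\psi(\delta)\le\tfrac12\beta^{-1}$ works throughout, yielding a constant of the form $(2\beta)^{1/p}\delta^{-1}$, which is perfectly acceptable because the theorem asserts a constant for each fixed $p$. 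The quasi-Banach lattice estimates of Section~\ref{sect:QBLE} thus play no role in the extrapolation; their real use in the paper is elsewhere (Theorem~\ref{thm:LTImproved} and its consequences), not in the proof of Theorem~\ref{thm:Tozoni}, which the paper delegates entirely to Tozoni.
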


\begin{corollary}\label{cor:squareMD}
Let $0<p\le 1$ and $\XX$ be a UMD Banach lattice. Then,
\[
H_p^{\delta,s}(I,\XX)=H_p^{\delta,m}(I,\XX)=\HL_p^{\delta,m}(I,\XX).
\]
\end{corollary}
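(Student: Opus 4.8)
The plan is to split the three-fold equality into the two identifications it encodes, each of which is delivered by a result already established in the excerpt. No new computation is needed; the whole matter reduces to correctly invoking Theorems~\ref{thm:Tozoni} and~\ref{thm:density} together with one structural fact about UMD spaces.

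First, the identification $H_p^{\delta,s}(I,\XX)=H_p^{\delta,m}(I,\XX)$ follows immediately from Theorem~\ref{thm:Tozoni}. Since $\XX$ is assumed to be a UMD Banach lattice, Tozoni's two-sided estimate
\[
\frac{1}{C}\norm{f}_{H_p^{\delta,m}(I,\XX)} \le \norm{f}_{H_p^{\delta,s}(I,\XX)} \le C \norm{f}_{H_p^{\delta,m}(I,\XX)}
\]
holds for every $f\in\Mt(I,\XX)$. Hence the two finiteness conditions defining the respective spaces pick out exactly the same subset of $\Mt(I,\XX)$, and the quasi-norms are equivalent on it; this gives the first equality as an equality of quasi-Banach spaces (up to equivalence of quasi-norms).

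Second, for the equality $\HL_p^{\delta,m}(I,\XX)=H_p^{\delta,m}(I,\XX)$ I would invoke Theorem~\ref{thm:density}, which asserts that this density identity holds precisely when $\XX$ has the RNP. The task therefore reduces to verifying that a UMD Banach lattice has the RNP. Here I would use the classical structural fact that every UMD Banach space is (super)reflexive, together with the elementary observation that every reflexive Banach space enjoys the RNP. Consequently $\XX$ has the RNP, and Theorem~\ref{thm:density} yields the second equality, closing the chain $H_p^{\delta,s}=H_p^{\delta,m}=\HL_p^{\delta,m}$.

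The only step that goes beyond a direct quotation of earlier results is the implication that UMD implies the RNP, which I expect to be the crux of the argument. It rests on the reflexivity of UMD spaces---a standard but non-elementary fact from the theory of UMD spaces---combined with the routine passage from reflexivity to the RNP. Once this implication is in hand, the corollary follows simply by stringing together Theorems~\ref{thm:Tozoni} and~\ref{thm:density}; everything else is bookkeeping about which space is contained in which.
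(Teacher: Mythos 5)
Your proposal is correct and follows exactly the paper's own argument: the paper likewise combines Theorem~\ref{thm:Tozoni} with Theorem~\ref{thm:density}, justifying the RNP hypothesis by the same chain ``UMD implies reflexive implies RNP.'' Nothing is missing, and the step you flag as the crux is precisely the one the paper also singles out.
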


\begin{proof}
UMD Banach spaces are reflexive and so they have the RNP. Taking this into account, we need only combine Theorem~\ref{thm:Tozoni} with Theorem~\ref{thm:density}.
\end{proof}

\subsection{Banach-valued discrete Hardy spaces.}
Expansions by means of Haar functions allow us to endow $H_p^{\delta,s}(I,\XX)$, $0<p\le 1$, with a lattice structure.

Recall that the $L_2$-normalized Haar system $(h_J)_{J\in\Dt}$ is given by
\[
h_J=\abs{J}^{-1/2} \enpar{ -\chi_{J^-}+\chi_{J^+}},
\]
where $J^-$ and $J^+$ are the left-half and the right-half of $J$, respectively. Given a linear combination $h=\sum_{J\in \Jt} a_J \chi_J$ of indicator functions of dyadic intervals and a dyadic martingale $F$ we put
\[
\enangle{F, h}=\sum_{J\in \Jt} a_J \Ave(F,J).
\]
An arrangement $(J_n)_{n=1}^\infty$ of $\Dt$ is said to be \emph{natural} if $\abs{J_{n+1}}\le \abs{J_{n}}$ for all $n\in\NN$. Given such an arrangement we define $J_0=I$, $h_0=\chi_I$ and $h_n=h_{J_n}$ for all $n\in\NN$. For any dyadic martingale $F=(f_k)_{k=0}^\infty$ we have
\begin{equation*}
f_k=\enangle{F,\chi_I} \chi_I + \sum_{j=0}^m \sum_{J\in \Dt_j} \enangle{F, h_J} h_J
=\sum_{n=0}^{2^k} \enangle{F, h_{J_n}} h_{J_n}.
\quad k\in\NN.
\end{equation*}
Hence, the map
\[
\Ft\colon \Mt(I,\XX) \to \XX^{\NN_0}, \quad F \mapsto \enpar{ \abs{J_n}^{1/p-1/2} \enangle{F,h_n}}_{n=0}^\infty,
\]
is a linear bijection with inverse
\[
\St\colon \XX^{\NN_0} \to \Mt(I,\XX), \, (\gamma_n)_{n=0}^\infty \mapsto \enpar{ \sum_{n=0}^{2^k} \abs{J_n}^{1/2-1/p} \gamma_n h_n}_{k=0}^\infty.
\]
Note that $\abs{h_n} =\abs{J_n}^{-1/2}\chi_{J_n}$ for all $n\in\NN_0$. Therefore, if we put
\[
\norm{\gamma}_{H_p(\Dt,\XX)}=\sup_{k\ge 0}\norm{ \enpar{\sum_{n=0}^{2^k} \abs{\gamma_n}^2 \abs{J_n}^{-2/p} \chi_{J_n}}^{1/2}}_{L_p(I,\XX)}, \quad \gamma=(\gamma_n)_{n=0}^\infty,
\]
then $\norm{\St(\gamma)}_{H_p^{\delta,s}(I,\XX)}=\norm{\gamma}_{H_p(\Dt,\XX)}$ for all $\gamma=\in\XX^{\NN_0}$.
So,
\begin{equation}\label{eq:squarediscrete}
H_p^{\delta,s}(I,\XX) \simeq H_p(\Dt,\XX):=\enbrace{ \gamma\in \XX^{\NN_0} \colon \norm{\gamma}_{H_p(\Dt,\XX)}<\infty}
\end{equation}
isometrically. If $\XX$ is a K\"othe space then given $\gamma=(\gamma_n)_{n=0}^\infty\in\XX^{\NN_0}$ we can define
\[
T(\gamma)
:=\enpar{ \sum_{n=0}^\infty \abs{\gamma_n}^2 \abs{J_n}^{-2/p} \chi_{J_n}}^{1/2}
\]
as a function that assigns to each point of $I$ a measurable function that possibly takes the value $\infty$. Since $L_p(I,\XX)$ inherits from $\XX$ the property of being a K\"othe space,
\[
\norm{\gamma}_{H_p(\Dt,\XX)}= \norm{T(\gamma)}_{L_p(I,\XX)}, \quad \gamma\in\XX^{\NN_0}.
\]

These discrete Hardy spaces $H_p(\Dt,\XX)$ are closely related to Triebel-Lizorkin spaces. Given $p\in(0,\infty)$ and $q\in(0,\infty]$, we define the space $\TL_{p,q}$ as the quasi-Banach lattice associated with the function quasi-norm over $\NN_0$ defined as
\[
\tau_{p,q}(\gamma) = \norm{\enpar{ \sum_{n=0}^\infty \gamma_n^q \abs{J_n}^{-q/p} \chi_{J_n} }^{1/q}}_{L_p(I)}, \quad
\gamma=(\gamma_n)_{n=0}^\infty,
\]
with the standard modifcation if $q=\infty$. Note that neither $H_p(\Dt,\XX)$ nor the discrete Triebel-Lizorkin spaces $\TL_{p,q}$ depend on the particular natural arrangement of $\Dt$ chosen. The lattice $\TL_{p,q}$ is $p$-convex and $q$-concave. Therefore, if $0<p\le 1\le q\le 2 \le r\le\infty$ and $\XX$ is lattice $q$-convex and lattice $r$-concave,
\[
\frac{1}{ M_{(r)}(\XX) } \tau_{p,r} \enpar{\norm{\gamma}_\XX}\le \norm{\gamma}_{H_p(\Dt,\XX)} \le M^{(q)}(\XX) \tau_{p,q} \enpar{\norm{\gamma}_\XX},
\quad \gamma\in \XX^{\NN_0},
\]
so that
\[\TL_{p,q}(\XX) \subseteq H_p(\Dt,\XX)\subseteq \TL_{p,r}(\XX)\] continuously. In the particular case that $\XX$ is a Hilbert space, then
\begin{equation}\label{eq:TLIso}
H_p(\Dt,\XX)=\TL_{p,2}(\XX), \quad 0<p\le 1,
\end{equation}
isometrically.

Suppose that $\XX$ is a UMD space. In light of Theorems~\ref{thm:BGC}, \ref{thm:Anso} and \ref{thm:BGC+Mol}, Corollaries~\ref{cor:HTRad} and \ref{cor:squareMD}, and \eqref{eq:squarediscrete}, in order to prove that all $\XX$-valued Hardy spaces that we introduced are isomorphic it remains to see that $H_p^{\delta,a}(I,\XX)$ and $H_p^{a}(I,\XX)$ are isomorphic. Maurey \cite{Maurey1980} and Carleson \cite{Carleson1980} obtained this result independently for $p=1$ in the scalar valued case. Later on, Wojtaszczyk \cite{Woj1984} extended the isomorphism to the whole range $0<p\le 1$. Taking into account the equivalences between different classes of Hardy spaces, this result can be written in the following form
\begin{equation}\label{eq:WojIso}
H_p(\TT) \simeq\TL_{p,2}, \quad 0<p\le 1.
\end{equation}
We use this isomorphism and \eqref{eq:TLIso} to come full circle when the target space is a Hilbert space.

\begin{theorem}\label{thm:HpDTL}
Given $0<p\le 1$, $H_p(\DD,\ell_2)\simeq\TL_{p,2}(\ell_2)$.
\end{theorem}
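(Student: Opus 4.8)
The plan is to transport the scalar Wojtaszczyk isomorphism $H_p(\TT)\simeq\TL_{p,2}$ recorded in \eqref{eq:WojIso} up to the $\ell_2$-valued level through the Rademacher machinery developed earlier, exploiting crucially that the target space is Hilbertian. First I would reduce the problem from the disc to the torus. Viewed as a complex Banach space, $\ell_2$ is the complexification of the real Hilbert space $\ell_2$, hence regular, so Corollary~\ref{cor:HTRad} immediately yields $H_p(\DD,\ell_2)\simeq H_p(\TT,\ell_2)$.

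Next I would replace each end of the sought isomorphism by its Rademacher avatar. On the analytic side, Proposition~\ref{prop:HTRad} gives $H_p(\TT,\ell_2)\simeq\Rad(H_p(\TT))$. On the discrete side, the lattice $\TL_{p,2}$ is an $L$-concave K\"othe space (it is $2$-concave, and its defining function quasi-norm $\tau_{p,2}$ has the Fatou property by monotone convergence), so Proposition~\ref{prop:RadL2} gives $\TL_{p,2}(\ell_2)\simeq\Rad(\TL_{p,2})$. The whole matter is thereby reduced to identifying $\Rad(H_p(\TT))$ with $\Rad(\TL_{p,2})$.

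This last identification is precisely where the scalar isomorphism is brought to bear. Let $T\colon H_p(\TT)\to\TL_{p,2}$ be the linear isomorphism underlying \eqref{eq:WojIso}. Acting pointwise on the probability space $\Omega$, $T$ induces the operator $f\mapsto T\circ f$ on $L_r(P,\cdot)$, whose boundedness is immediate from $\norm{T\circ f}^r=\int_\Omega\norm{T(f)}^r\,dP\le\norm{T}^r\norm{f}^r$; it carries $x\,\varepsilon_n$ to $T(x)\,\varepsilon_n$ and therefore maps $\Rad(H_p(\TT))$ into $\Rad(\TL_{p,2})$, while the same construction applied to $T^{-1}$ furnishes a two-sided inverse. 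Chaining the four isomorphisms,
\[
H_p(\DD,\ell_2)\simeq H_p(\TT,\ell_2)\simeq\Rad(H_p(\TT))\simeq\Rad(\TL_{p,2})\simeq\TL_{p,2}(\ell_2),
\]
closes the argument. Combined with \eqref{eq:TLIso} and \eqref{eq:squarediscrete}, this comes full circle by further identifying $H_p(\DD,\ell_2)$ with the discrete lattice $H_p(\Dt,\ell_2)$.

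I do not anticipate a serious technical obstacle, since every ingredient is already in place; the delicate point is conceptual and lives in the third paragraph. Wojtaszczyk's scalar isomorphism has no evident coordinatewise extension to $\ell_2$-valued functions, and the genuinely hard statement that one would otherwise need, namely the vectorized atomic equivalence $H_p^a(I,\ell_2)\simeq H_p^{\delta,a}(I,\ell_2)$, is here circumvented entirely. The force of the proof lies in recognizing that the Hilbert-valued spaces at both ends are the Rademacher spaces of their scalar counterparts, a feature special to the target $\ell_2$; the functoriality of $\Rad(\cdot)$ under isomorphisms then does all the remaining work.
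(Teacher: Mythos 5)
Your proposal is correct and follows essentially the same route as the paper's own proof, which likewise chains Corollary~\ref{cor:HTRad}, Proposition~\ref{prop:HTRad}, the scalar isomorphism \eqref{eq:WojIso} transported to the level of $\Rad(\cdot)$, and Proposition~\ref{prop:RadL2}. The only difference is that you make explicit two points the paper leaves implicit, namely the functoriality of $\Rad$ under pointwise composition with an isomorphism and the verification that $\TL_{p,2}$ is an $L$-concave K\"othe space, both of which you check correctly.
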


\begin{proof}
By \eqref{eq:WojIso}, $\Rad(H_p(\TT)) \simeq \Rad(\TL_{p,2})$. Combining this isomorphism with Proposition~\ref{prop:RadL2}, Proposition~\ref{prop:HTRad} and Corollary~\ref{cor:HTRad} yields the desired result.
\end{proof}

\section{Unconditional basic sequences in quasi-Banach lattices}\label{sect:UBQB}\noindent
Let us start this section by introducing some more terminology. The closed linear span of a subset $V$ of a quasi-Banach space $\XX$ will be denoted by $[V]$. A countable family $\XB=(\xx_n)_{n \in \Nt}$ in $\XX$ is an \emph{unconditional basic sequence} if for every $f\in[\xx_n \colon n \in \Nt]$ there is a unique family $(a_n)_{n \in \Nt}$ in $\FF$ such that the series $\sum_{n \in \Nt} a_n \, \xx_n$ converges unconditionally to $f$. If we additionally have $[\xx_n \colon n \in \Nt]=\XX$, then $\XB$ is an \emph{unconditional basis} of $\XX$.

If $\XB$ is an unconditional basis of $\XX$, the map
\[
\Fou\colon\XX\to \FF^{\Nt},\quad f=\sum_{n \in \Nt} a_n\, \xx_n \mapsto (\xx_n^*(f))_{n \in \Nt} = (a_n)_{n \in \Nt}
\]
will be called the \emph{coefficient transform} with respect to $\XB$, and the functionals $(\xx_n^*)_{n \in \Nt}$ the \emph{coordinate functionals} of $\XB$.

In keeping with current usage we will write $c_{00}(\Nt)$ for the set of all familes $(a_n)_{n\in \Nt}$ in $\FF^{\Nt}$ such that $\abs{\{n\in \Nt \colon a_n\not=0\}}<\infty$. If $\XB=(\xx_n)_{n \in \Nt}$ is an unconditional basic sequence in $\XX$, there is a constant $K\ge 1$ such that
\begin{equation}\label{eq:ubslattice}
\norm{ \sum_{n \in \Nt} b_n \, \xx_n} \le K \norm{ \sum_{n \in \Nt} a_n \, \xx_n}
\end{equation}
for all $(a_n)_{n\in \Nt}\in c_{00}(\Nt)$ and all $(b_n)_{n\in \Nt}$ in $\FF^{\Nt}$ with $\abs{b_n} \le\abs{a_n}$ for all $n\in\Nt$ (see \cite{AABW2021}*{Theorem 1.10}). If inequality \eqref{eq:ubslattice} is satisfied for a given constant $K$ we say that $\XB$ is $K$-unconditional.

We infer from \eqref{eq:ubslattice} that an unconditional basis $\XB$ of $\XX$ induces a lattice structure on $\XX$. Indeed, if for each $f=(a_n)_{n\in\Nt}$ in $\FF$ we define
\begin{equation*}
\norm{f}=\sup\enbrace{ \norm{ \sum_{n \in \Nt} b_n \, \xx_n}\colon (b_n)_{n\in \Nt}\in c_{00}(\Nt), \, \abs{b_n} \le\abs{a_n} \, \forall \, n\in\Nt},
\end{equation*}
then $\norm{\cdot}$ is a quasi-norm on $\Fou(\XX)$, and $\Fou(\XX)$ is a quasi-Banach lattice called the \emph{quasi-Banach lattice associated with $\XB$.} when equipped with this quasi-norm and the standard ordering The map $\Fou$ is an isomorphism from $\XX$ onto $\Fou(\XX)$. In fact, $\norm{\Fou} \le K$ and $\norm{\Fou^{-1}} \le 1$. Conversely, given a quasi-Banach lattice $\LL$ over $\Nt$, the canonical unit vector system $\Et_{\Nt}:=(\ee_n)_{n\in \Nt}$ of $\FF^{\Nt}$, defined for $n\in \Nt$ by $\ee_n=(\delta_{n,m})_{m\in \Nt}$, where $\delta_{n,m}=1$ if $n=m$ and $\delta_{n,m}=0$ otherwise, is a $1$-unconditional basic sequence of $\LL$. If $\Et_{\Nt}$ is \emph{normalized} in $\LL$, i.e., $\norm{\ee_n}=1$ for all $n\in\Nt$, we say that $\LL$ is a sequence space. If $c_{00}(\Nt)$ is dense in $\LL$, so that $\Et_\Nt$ is an unconditional basis of $\LL$, we say that $\LL$ is \emph{minimal}. The coordinate functionals of the unit vector system of a minimal lattice $\LL$ over $\Nt$ are $(\ee_n^*|_\LL)_{n\in\Nt}$, where for each $n\in\Nt$ $\ee_n^*\colon \FF^{\Nt}\to \FF$ is the canonical coordinate projection defined by
\[
(a_k)_{k\in\Nt} \mapsto a_n.
\]

\subsection{Building lattices from families of quasi-Banach lattices}
Given a countable family $\YB=(\yy_m)_{m\in\Mt}$ in a quasi-Banach lattice $\LL$ we define
\[
\rho_{\YB} \colon [0,\infty)^\Mt \to[0,\infty] , \quad (a_m)_{m\in \Mt} \mapsto
\sup_{\substack{A\subseteq \Mt\\ \abs{A}<\infty}} \norm{ \enpar{\sum_{m\in A} \abs{a_m}^2 \abs{\yy_m}^2}^{1/2}}.
\]
The map $\rho_{\YB}$ is a function quasi-norm over $\Mt$ with the Fatou property. Hence,
\[
\LL_\YB=\{ f\in \FF^\NN \colon \rho_{\YB}(f)<\infty\}
\]
is a quasi-Banach lattice. One could say that passing from $\YB$ to $\LL_\YB$ is an \emph{latticefication} process (see \cite{BCLT1985}). We define it here because we cannot guarantee that unconditionality is preserved when carrying out some natural re-shapings of families in quasi-Banach lattices.

\subsection{Complemented families in quasi-Banach lattices}
An unconditional basic sequence $\YB=(\yy_m)_{m\in \Mt}$ in a quasi-Banach space $\XX$ is said to be \emph{complemented} if its closed linear span $\YY= [\YB]$ is a complemented subspace of $\XX$, i.e., there is a bounded linear map $P\colon\XX\to\YY$ with $P|_\YY=\Id_\YY$. An unconditional basic sequence $\YB$ is complemented in $\XX$ if and only if there exists a sequence $\YB^*=(\yy_m^*)_{m\in \Mt}$ in $\XX^*$ such that $\yy_m^*(\yy_n)=\delta_{m,n}$ for every $(m,n)\in \Mt^2$ and a the map $P\colon\XX\to \XX$ given by
\begin{equation}\label{eq:projCUBS}
P(f)=\sum_{m\in \Mt} \yy_m^*(f) \, \yy_m, \quad f\in\XX,
\end{equation}
is bounded and linear. The functionals $\YB^*$ are called the \emph{projecting functionals} for $\YB$. We have
\[
K[\YB,\YB^*]=\sup_{ \substack{ \abs{a_m} \le 1 \\ \norm{f} \le 1}}\norm{ \sum_{m\in \Mt} a_m \, \yy_m^*(f) \, \yy_m}<\infty.
\]
This observation immediately yields the following result.

\begin{lemma}\label{lem:bilinear}
Let $\YB=(\yy_m)_{m\in \Mt}$ be a complemented basic sequence in a quasi-Banach space $\XX$ with projecting functionals $\YB^*=(\yy_m^*)_{m\in \Mt}$. Then the map
\[
(f,f^*) \mapsto \enpar{ f^*(\yy_m) \, f^*(\yy_m)}_{m\in \Mt}, \quad f\in \XX, \, f^*\in\XX^*
\]
defines a bilinear map into $\ell_1(\Mt)$ whose norm is bounded by $K[\YB,\YB^*]$.
\end{lemma}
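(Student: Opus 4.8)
The plan is to read the displayed map as the bilinear pairing of the $m$-th coefficient of $f$ against the value of $f^*$ at $\yy_m$, namely
\[
\Phi(f,f^*)=\enpar{\yy_m^*(f)\, f^*(\yy_m)}_{m\in\Mt}.
\]
Bilinearity is immediate, since for each fixed $m$ the assignment $f\mapsto \yy_m^*(f)$ is linear on $\XX$ and $f^*\mapsto f^*(\yy_m)$ is linear on $\XX^*$. Thus the entire content of the statement lies in the quantitative claim that $\Phi(f,f^*)\in\ell_1(\Mt)$ together with the estimate $\norm{\Phi(f,f^*)}_{\ell_1(\Mt)}\le K[\YB,\YB^*]\,\norm{f}\,\norm{f^*}$.

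To prove this estimate I would first fix a finite subset $A\subseteq\Mt$ and, for each $m\in A$, choose a scalar $a_m\in\FF$ with $\abs{a_m}\le 1$ that aligns the phases, i.e.\ so that $a_m\,\yy_m^*(f)\,f^*(\yy_m)=\abs{\yy_m^*(f)\,f^*(\yy_m)}$; concretely $a_m=\pm 1$ in the real case and $a_m=\overline{\sign(\yy_m^*(f)\,f^*(\yy_m))}$ in the complex case (with $a_m=0$ when the product vanishes). Applying the single functional $f^*$ to the \emph{finite} sum $\sum_{m\in A}a_m\,\yy_m^*(f)\,\yy_m$ and using linearity then gives
\[
\sum_{m\in A}\abs{\yy_m^*(f)\,f^*(\yy_m)}
= f^*\enpar{\sum_{m\in A}a_m\,\yy_m^*(f)\,\yy_m}
\le \norm{f^*}\,\norm{\sum_{m\in A}a_m\,\yy_m^*(f)\,\yy_m}.
\]
At this point I would invoke the finiteness of $K[\YB,\YB^*]$ recorded right before the statement: taking the coefficients $a_m$ above (extended by $0$ outside $A$) is an admissible choice in the supremum defining $K[\YB,\YB^*]$, so by homogeneity $\norm{\sum_{m\in A}a_m\,\yy_m^*(f)\,\yy_m}\le K[\YB,\YB^*]\,\norm{f}$. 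Combining the two displays bounds the partial sum over $A$ by $K[\YB,\YB^*]\,\norm{f}\,\norm{f^*}$, and since this holds uniformly over all finite $A\subseteq\Mt$, passing to the supremum yields $\sum_{m\in\Mt}\abs{\yy_m^*(f)\,f^*(\yy_m)}\le K[\YB,\YB^*]\,\norm{f}\,\norm{f^*}$, which is exactly the asserted $\ell_1$-bound.

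I do not expect a genuine obstacle here: the argument is a one-step duality estimate in which the boundedness of the projection $P$—equivalently the finiteness of $K[\YB,\YB^*]$—does all the heavy lifting. The only points requiring a little care are the phase alignment in the complex-scalar case and the reduction to finite index sets $A$, both of which are harmless precisely because $K[\YB,\YB^*]$ is already defined as a supremum over coefficient families of modulus at most one. No local convexity of $\XX$ is needed, so the proof goes through verbatim in the quasi-Banach setting.
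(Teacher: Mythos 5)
Your proof is correct and is exactly the argument the paper intends: the paper gives no separate proof, stating that the lemma follows immediately from the finiteness of $K[\YB,\YB^*]$, and your phase-alignment plus finite-subset duality estimate is precisely the spelled-out version of that one-step argument. You also correctly read the (evidently typographical) map as $(f,f^*)\mapsto\enpar{\yy_m^*(f)\,f^*(\yy_m)}_{m\in\Mt}$, which is the intended pairing.
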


Suppose that $\XB=(\xx_n)_{n \in \Nt}$ and $\YB=(\yy_n)_{n \in \Nt}$ are (countable) families of vectors in quasi-Banach spaces $\XX$ and $\YY$, respectively. We say that $\XB$ $C$-\emph{dominates} $\YB$ if there is a linear map $T$ from $[\XB]$ into $\YY$ with $T(\xx_n)=\yy_n$ for all $n \in \Nt$ such that $\norm{T}\le C$. If $T$ is an isomorphic embedding and $\max\{ \norm{T}, \norm{T^{-1}}\}\le C$, we say that $\XB$ and $\YB$ are $C$-\emph{equivalent}. In all cases, if the constant $C$ is irrelevant, we drop it from the notation.

If $\XB=(\xx_n)_{n\in\Nt}$ is an unconditional basic sequence in a quasi-Banach space $\XX$, then
\[
\norm{\sum_{n\in\Nt} a_n \,\xx_n} \approx \Ave_{\varepsilon_n=\pm 1} \norm{\sum_{n\in\Nt} \varepsilon_n\, a_n \,\xx_n}, \quad (a_n)_{n\in\Nt} \in c_{00}(\Nt).
\]
Hence, by Lemma~\ref{lem:MaureyQB}, if $\XX$ is an $L$-concave quasi-Banach lattice,
\begin{equation}\label{eq:averagingUnc}
\norm{\sum_{n\in\Nt} a_n \,\xx_n} \approx \norm{\enpar{ \sum_{n\in\Nt} \abs{a_n}^2 \abs{\xx_n}^2}^{1/2} }, \quad (a_n)_{n\in\Nt} \in c_{00}(\Nt).
\end{equation}

In Banach lattices \eqref{eq:averagingUnc} still holds if we drop the condition that $\XX$ has nontrivial concavity and assume instead that $\XB$ is complemented (see \cite{LinTza1979}*{Proposition~1.d.6 and subsequent Remark~1}). We would like to rely on the validity of an equivalence like \eqref{eq:averagingUnc} in our more general setting but its proof passes through duality so it has to be redone. Theorem~\ref{thm:KaltonVectorizes} will come to our aid to show \eqref{eq:averagingUnc} from scratch in spaces without local convexity. Besides, for our purposes it will be crucial to replace $(\xx_n)_{n\in\Nt}$ with a suitable family $(\yy_n)_{n\in\Nt}$ on the right-hand side of \eqref{eq:averagingUnc}.

\begin{theorem}\label{thm:LTImproved}
Let $\LL$ be an $L$-convex quasi-Banach lattice with $L$-convexity constant $\varepsilon$. Let $\XB=(\xx_n)_{n\in\Nt}$ be a complemented unconditional basic sequence in $\LL$ with projecting functionals $\XB^*=(\xx_n^*)_{n\in\Nt}$. Let $\YB=(\yy_n)_{n\in\Nt}$ be a sequence in $\LL$ with
\[
\abs{\yy_n} \le D \abs{\xx_n}, \quad n\in\Nt,
\]
for some constant $D$, and
\[
\delta:=\inf_{n\in\Nt} \abs{\xx_n^*(\yy_n)}>0.
\]
Then $\XB$ is $C$-equivalent to the unit vector system of $\LL_\YB$, where $C$ depends only on $\varepsilon$, $\delta$, $D$ and $K:=K[\XB,\XB^*]$.
\end{theorem}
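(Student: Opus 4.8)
The plan is to recast the conclusion as a two-sided lattice estimate and split it into a soft inequality, governed by $L$-convexity and the domination $\abs{\yy_n}\le D\abs{\xx_n}$, and a hard inequality, where the lower bound $\delta$ and Kalton's vectorization theorem come into play. First I would record that, since the supremum defining $\rho_\YB$ is attained on the full (finite) support, the unit vector system $(\ee_n)_{n\in\Nt}$ of $\LL_\YB$ satisfies $\norm{\sum_{n}a_n\ee_n}_{\LL_\YB}=\norm{\enpar{\sum_{n}\abs{a_n}^2\abs{\yy_n}^2}^{1/2}}$ for all $(a_n)\in c_{00}(\Nt)$. Hence the asserted $C$-equivalence amounts to
\[
\frac{1}{C}\norm{\enpar{\sum_{n}\abs{a_n}^2\abs{\yy_n}^2}^{1/2}}\le\norm{\sum_{n}a_n\,\xx_n}\le C\norm{\enpar{\sum_{n}\abs{a_n}^2\abs{\yy_n}^2}^{1/2}},
\]
and I would prove the two inequalities separately.

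For the left-hand inequality the domination $\abs{\yy_n}\le D\abs{\xx_n}$ gives, through the lattice functional calculus, $\enpar{\sum_n\abs{a_n}^2\abs{\yy_n}^2}^{1/2}\le D\,\enpar{\sum_n\abs{a_n}^2\abs{\xx_n}^2}^{1/2}$, so it suffices to control the square function of $(a_n\xx_n)_n$. As $\LL$ is $L$-convex, Lemma~\ref{lem:MaureyQB}\ref{lem:MaureyQB:Convex} yields $L_2\enpar{(a_n\xx_n)_n}\le C\,A_r\enpar{(a_n\xx_n)_n}$ with $C=C(\varepsilon)$, while the $K$-unconditionality estimate \eqref{eq:ubslattice} gives $\norm{\sum_n\varepsilon_n a_n\xx_n}\le K\norm{\sum_n a_n\xx_n}$ for every choice of signs, whence $A_r\enpar{(a_n\xx_n)_n}\le K\norm{\sum_n a_n\xx_n}$. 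Combining the three bounds settles this inequality with a constant depending only on $\varepsilon$, $D$ and $K$; note that $\delta$ is not used here.

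The right-hand inequality is the heart of the matter, and I would obtain it by producing a bounded operator $\Theta\colon\LL_\YB\to[\XB]$ with $\Theta(\ee_n)=\xx_n$, factored as $\Theta=\Psi\circ\iota$. Here $\iota\colon\LL_\YB\to\LL(\ell_2)$ is the isometric embedding placing $\yy_n$ in the $n$th coordinate, which is an isometry by the very definition of $\rho_\YB$, and $\Psi\colon\LL(\ell_2)\to\LL$ is the diagonal operator
\[
\Psi\enpar{(g_n)_n}=\sum_{n}\frac{1}{\xx_n^*(\yy_n)}\,\xx_n^*(g_n)\,\xx_n,
\]
well defined since $\abs{\xx_n^*(\yy_n)}\ge\delta$, and satisfying $\Theta(\ee_n)=\Psi\circ\iota(\ee_n)=\xx_n$. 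Using \eqref{eq:ubslattice} to absorb the scalar factors $1/\xx_n^*(\yy_n)$ (each of modulus at most $1/\delta$) at the cost of $K/\delta$, the boundedness of $\Psi$ reduces to the single diagonal estimate
\begin{equation*}
\norm{\sum_{n}\xx_n^*(g_n)\,\xx_n}\le C\,\norm{\enpar{\sum_{n}\abs{g_n}^2}^{1/2}},\qquad (g_n)_n\in\LL(\ell_2). \tag{$\dagger$}
\end{equation*}

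Establishing $(\dagger)$ is the step I expect to be the main obstacle, for it is exactly where local convexity fails and duality is unavailable: the naive route of randomizing the sum and invoking Khintchine would require the estimate $A_r\lesssim L_2$, that is, nontrivial \emph{concavity}, which $\LL$ need not possess. The substitute is Theorem~\ref{thm:KaltonVectorizes}: applying it to the bounded projection $P(f)=\sum_n\xx_n^*(f)\xx_n$, and to its randomly signed variants $P_\varepsilon(f)=\sum_n\varepsilon_n\,\xx_n^*(f)\,\xx_n$ (each of norm at most $K=K[\XB,\XB^*]$), shows that $P$ vectorizes, so that $P\otimes\Id_{\ell_2}$ is bounded on $\LL(\ell_2)$ with a constant depending only on $\varepsilon$ and $K$. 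I would then combine this with the lattice Khintchine inequalities of Theorem~\ref{lem:KintchineLattice}, Rademacher averaging and Fubini's theorem, using $\EE[\varepsilon_n\varepsilon_m]=\delta_{n,m}$ to single out the diagonal $\sum_n\xx_n^*(g_n)\xx_n$ from $\Ave_\varepsilon P_\varepsilon\enpar{\sum_m\varepsilon_m g_m}$, and the $r$-convexity of $\LL$ to pass the resulting lattice average outside the quasi-norm. The delicate point is reconciling the coupling of the two roles played by the same sign sequence (inside the projection and inside the vectorized argument); this is precisely what replaces the Lindenstrauss--Tzafriri duality argument underlying \eqref{eq:averagingUnc}. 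Granting $(\dagger)$ one gets $\norm{\Psi}\le C(\varepsilon,\delta,K)$, hence $\norm{\sum_n a_n\xx_n}=\norm{\Theta\enpar{\sum_n a_n\ee_n}}\le\norm{\Theta}\,\norm{\enpar{\sum_n\abs{a_n}^2\abs{\yy_n}^2}^{1/2}}$, which is the remaining inequality. Together with the soft direction this proves the $C$-equivalence with $C=C(\varepsilon,\delta,D,K)$.
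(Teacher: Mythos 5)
Your overall architecture matches the paper's: split the equivalence into two one-sided square-function estimates and let Theorem~\ref{thm:KaltonVectorizes} carry the hard direction. Your soft direction is correct (the paper obtains it instead by vectorizing the contraction $\ee_n\mapsto\xx_n$, but your route through Lemma~\ref{lem:MaureyQB}~\ref{lem:MaureyQB:Convex} and \eqref{eq:ubslattice} is equally valid and uses only unconditionality). The genuine gap is in your proof of $(\dagger)$. The proposed mechanism --- write the diagonal as $\sum_n\xx_n^*(g_n)\,\xx_n=\Ave_\varepsilon P_\varepsilon\enpar{\sum_m\varepsilon_m g_m}$ and then ``pass the resulting average outside the quasi-norm using $r$-convexity'' --- fails exactly at the point you flag as delicate, and it cannot be repaired as stated: if the family $(g_n)$ is supported on $N$ indices, the average runs over $2^N$ sign patterns, and pulling an average of $M$ terms out of an $r$-norm with $r<1$ costs a factor $M^{1/r-1}$ (sharp, e.g., for disjoint unit vectors in $\ell_r$); here that is $2^{N(1/r-1)}$, unbounded in the support size. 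Bounding the norm of an average by an average (or supremum) of norms is a concavity-type statement --- it is what Lemma~\ref{lem:MaureyQB}~\ref{lem:MaureyQB:Concave} provides under $L$-\emph{concavity} --- whereas $\LL$ is only assumed $L$-convex; nor can one integrate against the Rademacher measure, since, as the paper notes after Proposition~\ref{prop:RadL2}, Bochner-type averaging is unavailable without local convexity.

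The paper closes precisely this gap with no sign averaging at all, and the fix is available inside your own setup: apply Theorem~\ref{thm:KaltonVectorizes} to the projection $P(f)=(\xx_n^*(f))_{n\in\Nt}$ regarded as an operator from $\LL$ into the sequence lattice $\XX$ associated with $\XB$, which is $L$-convex with constant controlled by $\varepsilon$ and $K$ by Lemma~\ref{thm:LCNatural}. For a finite family $(g_j)_{j\in J}$ this yields
\[
\norm{\enpar{\enpar{\sum_{j\in J}\abs{\xx_n^*(g_j)}^2}^{1/2}}_{n\in\Nt}}_{\XX}\le C_1\norm{\enpar{\sum_{j\in J}\abs{g_j}^2}^{1/2}}_{\LL},
\]
and the diagonal is then extracted by \emph{positivity} rather than by orthogonality of signs: coordinatewise $\abs{\xx_n^*(g_n)}\le\enpar{\sum_{j}\abs{\xx_n^*(g_j)}^2}^{1/2}$, so the lattice monotonicity of the quasi-norm of $\XX$, followed by the contraction $\ee_n\mapsto\xx_n$, gives $(\dagger)$ at once. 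In fact, applying this with $g_j=a_j\yy_j$ and discarding all off-diagonal terms yields $\delta\norm{\sum_n a_n\xx_n}\le C_1\norm{\enpar{\sum_n\abs{a_n}^2\abs{\yy_n}^2}^{1/2}}$ directly, so the factorization through your operator $\Psi$ and the absorption of the scalars $1/\xx_n^*(\yy_n)$ become unnecessary.
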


\begin{proof}
If $\XX$ is the quasi-Banach lattice over $\Nt$ associated with $\XB$, the linear operator $P\colon \LL \to \XX$ given by
\[
f\mapsto (\xx_n^*(f))_{n\in\Nt}
\]
satisfies $\norm{P}\le K$. In turn, the linear operator $S\colon \XX \to \LL$ defined by $\ee_n\mapsto \xx_n$ for all $n\in\Nt$ is a contraction. By Lemma~\ref{thm:LCNatural}, $\XX$ is $L$-convex with an $L$-convexity constant that depends only on $K$ and $\varepsilon$. By Theorem~\ref{thm:KaltonVectorizes}, there are constants $C_1$ and $C_2$ depending only on $K$ and $\varepsilon$ such that
\begin{equation}\label{eq:VectorA}
\norm{\enpar{\sum_{j\in J} \enpar{\abs{\xx_n^*(f_j)}^2}^{1/2}}_{n\in\Nt}}\le C_1 \norm{\enpar{\sum_{j\in J} \abs{f_j}^2}^{1/2}}
\end{equation}
for any finite family $(f_j)_{j\in J}$ in $\LL$, and
\begin{equation}\label{eq:VectorB}
\norm{\enpar{\sum_{j\in J} \abs{\sum_{n\in\Nt} a_{j,n}\, \xx_n}^2}^{1/2}} \le C_2 \norm{\enpar{\sum_{j\in J} \enpar{\abs{a_{j,n}}^2}^{1/2}}_{n\in\Nt}}
\end{equation}
for any finite family $((a_{j,n})_{n\in\Nt})_{j\in J}$ in $c_{00}(\Nt)$. Given $(a_n)_{n\in\Nt} \in c_{00}(\Nt)$, set $J=\{n\in\Nt \colon a_n\not=0\}$. Applying \eqref{eq:VectorA} to the families $(a_n\, \xx_n)_{n\in J}$ and $(a_n\, \yy_n)_{n\in J}$ gives that the unit vector system of $\LL_\XB$ $C_1$-dominates the unit vector system of $\XX$, and the unit vector system of $\LL_\YB$ $(C_1/\delta)$-dominates the unit vector system of $\XX$. In turn, Applying \eqref{eq:VectorB} to the family $(a_n\, \ee_n)_{n\in J}$ gives that the unit vector system of $\XX$ $C_2$-dominates the unit vector system of $\LL_\XB$.

It is clear that the unit vector system of $\LL_\XB$ $D$-dominates the unit vector system of $\LL_\YB$. By construction, the unit vector system of $\XX$ dominates $\XB$, and $\XB$ $K$-dominates the unit vector system of $\XX$. Summing up, we obtain that the unit vector system of $\LL_\YB$ $(C_1/\delta)$-dominates $\XB$, and $\XB$ $(DKC_2)$-dominates the unit vector system of $\LL_\YB$.
\end{proof}

\begin{corollary}\label{cor:LTImproved}
Let $\LL$ be an $L$-convex quasi-Banach lattice with $L$-convexity constant $\varepsilon$. Let $\XB=(\xx_n)_{n\in\Nt}$ be a complemented unconditional basic sequence in a $\LL$ with projecting functionals $\XB^*=(\xx_n^*)_{n\in\Nt}$. Then there is a constant $C$ depending only on $\varepsilon$ and $K:=K[\XB,\XB^*]$ such that
\[
\frac{1}{C}\norm{\sum_{n\in\Nt} a_n \,\xx_n} \le \norm{\enpar{ \sum_{n\in\Nt} \abs{a_n}^2 \abs{\xx_n}^2}^{1/2} } \le C\norm{\sum_{n\in\Nt} a_n \,\xx_n}
\]
for all $ (a_n)_{n\in\Nt} \in c_{00}(\Nt)$.
\end{corollary}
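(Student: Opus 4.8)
The plan is to obtain this corollary as the special case $\YB=\XB$ of Theorem~\ref{thm:LTImproved}, so that essentially no new work is required beyond checking that the hypotheses specialize correctly and that the conclusion unwinds into the displayed estimate.

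First I would set $\yy_n=\xx_n$ for all $n\in\Nt$, so that $\YB=\XB$. The domination hypothesis $\abs{\yy_n}\le D\abs{\xx_n}$ then holds trivially with $D=1$. Next I would compute the quantity $\delta=\inf_{n\in\Nt}\abs{\xx_n^*(\yy_n)}=\inf_{n\in\Nt}\abs{\xx_n^*(\xx_n)}$. Since $\XB^*$ are the projecting functionals for the complemented unconditional basic sequence $\XB$, they satisfy $\xx_m^*(\xx_n)=\delta_{m,n}$; in particular $\xx_n^*(\xx_n)=1$ for every $n\in\Nt$, so $\delta=1>0$. Thus all the hypotheses of Theorem~\ref{thm:LTImproved} are met. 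The constant furnished by that theorem a priori depends on $\varepsilon$, $\delta$, $D$ and $K$, but since here $\delta=D=1$ it depends in fact only on $\varepsilon$ and $K$.

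With these choices, Theorem~\ref{thm:LTImproved} asserts that $\XB$ is $C$-equivalent to the unit vector system of $\LL_\YB=\LL_\XB$, with $C=C(\varepsilon,K)$. It then remains only to translate this equivalence into the two-sided estimate. For a finitely supported family $(a_n)_{n\in\Nt}\in c_{00}(\Nt)$, the supremum over finite $A\subseteq\Nt$ defining $\rho_\XB$ is attained at $A=\{n\in\Nt\colon a_n\ne 0\}$, whence
\[
\norm{(a_n)_{n\in\Nt}}_{\LL_\XB}=\rho_\XB\enpar{(a_n)_{n\in\Nt}}=\norm{\enpar{\sum_{n\in\Nt}\abs{a_n}^2\abs{\xx_n}^2}^{1/2}}.
\]
Writing $T\colon[\XB]\to\LL_\XB$ for the coordinate map $\sum_n a_n\xx_n\mapsto(a_n)_{n\in\Nt}$, the bound $\norm{T}\le C$ reads $\norm{(a_n)_{n\in\Nt}}_{\LL_\XB}\le C\norm{\sum_n a_n\xx_n}$, which is the right-hand inequality of the corollary, while $\norm{T^{-1}}\le C$ reads $\norm{\sum_n a_n\xx_n}\le C\norm{(a_n)_{n\in\Nt}}_{\LL_\XB}$, which is the left-hand inequality.

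I expect there to be no genuine obstacle here: the only points requiring any care are the mild bookkeeping that $\delta=1$ and $D=1$ for the choice $\YB=\XB$, and the elementary observation that the Fatou-type supremum defining $\rho_\XB$ collapses to a single term on $c_{00}(\Nt)$. The corollary is thus a clean specialization, with all of the substantive content already residing in Theorem~\ref{thm:LTImproved} and, ultimately, in Kalton's vectorization result Theorem~\ref{thm:KaltonVectorizes}.
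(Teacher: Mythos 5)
Your proposal is correct and coincides with the paper's own proof, which consists precisely of applying Theorem~\ref{thm:LTImproved} with $\YB=\XB$; your verifications that $D=1$, that biorthogonality gives $\delta=\inf_n\abs{\xx_n^*(\xx_n)}=1$, and that the supremum defining $\rho_\XB$ collapses on $c_{00}(\Nt)$ are exactly the routine bookkeeping the paper leaves implicit.
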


\begin{proof}
Just apply Theorem~\ref{thm:LTImproved} with $\YB=\XB$.
\end{proof}

\section{A method for obtaining uniqueness of unconditional basis}\label{sect:AAMethod}\noindent
Given $0<a<b<\infty$, we say that a subset $A$ of a quasi-Banach space $\XX$ is \emph{$(a,b)$-semi-normalized}, if $a\le \norm{f}\le b$ for all $f\in\XX$. We say that a family $\XB=(\xx_n)_{n\in\Nt}$ in a quasi-Banach space $\XX$ is \emph{permutatively $C$-equivalent} to a family $\YB=(\yy_m)_{n\in \Mt}$ in a quasi-Banach space $\YY$, $1\le C<\infty$, and we write $\XB\sim\YB$, if there is a bijection $\pi\colon \Nt\to \Mt$ such that $\XB$ and $(\yy_{\pi(n)})_{n \in \Nt}$ are $C$-equivalent. As before, if the constants involved are irrelevant, we drop them from the notation. In this terminology, $\XX$ has a \emph{unique unconditional basis} if it has a normalized unconditional basis $\XB$ and any other normalized (or, equivalently, semi-normalized) unconditional basis is permutatively equivalent to $\XB$.

The handling of direct sums of bases, in particular powers of bases, has proven to be a useful tool for the study of the unconditional basis structure of Banach and quasi-Banach spaces. Given a quasi-Banach lattice $\LL$ on $\Jt$, and a family $(\XX_j, \Vert\cdot\Vert_{\XX_{j}})_{j\in\Jt}$ of (possibly repeated) quasi-Banach spaces with moduli of concavity uniformly bounded, the space
\[
\XX:=\enpar{\oplus_{j\in\Jt} \XX_j}_\LL=\enbrace{f=(f_j)_{j\in\Jt}\in\prod_{j\in\Jt} \XX_j\colon (\norm{ f_j}_{\XX_j})_{j\in\Jt} \in \LL},
\]
is a quasi-Banach space equipped with the obvious quasi-norm. If each space $\XX_j$ is a quasi-Banach lattice then $\XX$ is a quasi-Banach lattice with the obvious ordering. Moroever, if $\LL$ is lattice $p$-convex (resp., $p$-concave) and there is a constant $C$ such that $M_{(p)}(\XX_j)\le C$ (resp., $M^{(p)}(\XX_j)\le C$) for all $j\in\Jt$, then $\XX$ is lattice $p$-convex (resp., $p$-concave). If each space has a $K$-unconditional basis $\XB_j=(\xx_{j,n})_{n\in \Nt_j}$, $1\le K<\infty$, then the family $\oplus_{j\in\Jt} \XB_j$ constructed on the set
\begin{equation*}
\Nt= \bigcup_{j\in \Jt} \{j\} \times \Nt_j
\end{equation*}
by means of $\xx_{j,n}=(\xx_{j,n,i})_{i\in \Jt}$, where
\[
\xx_{j,n,i}=\begin{cases} \xx_{i,n}& \text{ if }i=j, \\ 0 & \text{ otherwise,}
\end{cases}
\]
is an unconditional basis of $\XX$.

If $F$ is a finite set and, for each $i\in F$, $\XX_i$ is a quasi-Banach space, the specific lattice structure $\LL$ over $F$ we use to built the quasi-Banach space $(\oplus_{i\in F} \XX_i)_\LL$ is, in the isomorphic theory, irrelevant. Here, we put
\[
\norm{f}=\max_{i\in F} \norm{f_i}\quad f=(f_i)_{i\in F}\in \bigoplus_{i\in F} \XX_i.
\]
If, for some $s\in\NN$,
\[
F=\NN[s]:=\enbrace{i\in \ZZ \colon 1\le i \le s}
\]
and $\XX_i=\XX$ for all $i\in F$, the resulting quasi-Banach space is denoted $\XX^s$ and called the \emph{$s$-fold product} of $\XX$. Similarly, if $\XB$ is a family in $\XX$, we denote by $\XB^s$ the corresponding family in $\XX^s$, and we call it the $s$-fold product of $\XB$. We will refer to the $2$-fold product of a basis or a space as its \emph{square}.

Another important notion is that of subbasis. A subfamily of a family $\XB=(\xx_n)_{n\in\Nt}$ in $\XX$ is a family $\YB=(\xx_n)_{n\in\Mt}$ for some subset $\Mt\subseteq\Nt$. If $\XB$ is a $K$-unconditional basis, $1\le K<\infty$ so is $\YB$, which we then call a \emph{subbasis} of $\XB$. Moreover, if $\XB$ is complemented with projecting functionals $\XB^*=(\xx_n^*)_{n\in\Nt}$, then $\YB$ is complemented with projecting functionals $\YB^*=(\xx_n^*)_{n\in\Mt}$, so that $K[\YB,\YB^*]\le K[\XB,\XB^*]$.

\begin{definition}
Let $\XX$ be a quasi-Banach space with a normalized unconditional basis $\XB$. We say that $\XB$ is \emph{power universal} if for every subbasis $\YB$ of $\XB$ and every $K$-complemented normalized unconditional basic sequence $\ZB$ in $[\YB]$, $1\le K<\infty$, there are $s=s(K,\XB)\in\NN$ and $C=C(K,\XB)\in[1,\infty)$ such that $\ZB$ is permutatively $C$-equivalent to a subbasis of $\YB^s$.
\end{definition}

Note that in the definition of power universality we can weaken the assumption that $\ZB$ is normalized and just assume that it is $(a,b)$-semi-normalized; in doing so, $C$ also depends on $a$ and $b$. Note also that subbases inherit the property of being power universal.

Being $\XB$ power universal does not seem to guarantee that $\XX$ has a unique unconditional basis. Notwithstanding, we take advantage of the recent advances in the theory to provide a mild condition on $\XB$ that closes the gap.

\begin{lemma}\label{lem:PUvsUTAP}
Let $\XX$ be a quasi-Banach space with a normalized unconditional basis $\XB$. Suppose that $\XB$ is power universal and permutatively equivalent to its square. Then $\XX$ has a unique unconditional basis.
\end{lemma}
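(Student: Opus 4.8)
The plan is to show that any two normalized unconditional bases of $\XX$ are permutatively equivalent by passing through the basis $\XB$ itself, exploiting a Cantor--Schr\"oder--Bernstein type argument adapted to the quasi-Banach setting. Let $\ZB$ be an arbitrary normalized unconditional basis of $\XX$. Since $\XB$ is an unconditional basis of $\XX$, it is certainly a subbasis of itself, and $\ZB$ is a $1$-complemented normalized unconditional basic sequence in $[\XB]=\XX$ (it is complemented because the coefficient transform with respect to $\ZB$ furnishes the projecting functionals). Applying power universality with $\YB=\XB$, I obtain $s\in\NN$ and a constant $C$ such that $\ZB$ is permutatively equivalent to a subbasis of $\XB^s$. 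The key point is now that, because $\XB$ is permutatively equivalent to its square $\XB^2$, an easy induction shows $\XB$ is permutatively equivalent to $\XB^s$ for every $s\in\NN$; hence $\ZB$ is permutatively equivalent to a subbasis of $\XB$.

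The roles of $\XB$ and $\ZB$ are now symmetric in the following sense. We have just produced a bijection identifying $\ZB$ with a subbasis $\XB'$ of $\XB$, i.e. $\ZB$ is permutatively equivalent to a subfamily of $\XB$. To close the argument I want the reverse domination: that $\XB$ is also permutatively equivalent to a subbasis of $\ZB$. For this I apply power universality a second time, but now with the roles reversed. Since $\ZB$ is an unconditional basis of $\XX$, it too is power universal---indeed $\XB$ and $\ZB$ are both bases of the \emph{same} space $\XX$, and power universality is a property one should read as a property of the space equipped with its basis; more carefully, since $\ZB$ is permutatively equivalent to a subbasis of $\XB$ and subbases inherit power universality, while the property also transfers across permutative equivalence, $\ZB$ is power universal as well. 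Applying power universality of $\ZB$ to the complemented normalized unconditional basic sequence $\XB$ inside $[\ZB]=\XX$ yields that $\XB$ is permutatively equivalent to a subbasis of $\ZB^t$ for some $t$, and using permutative equivalence of $\ZB$ with its square (which follows from that of $\XB$, since the two bases are permutatively comparable) we conclude $\XB$ is permutatively equivalent to a subbasis of $\ZB$.

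Having obtained that $\XB$ is permutatively equivalent to a subbasis of $\ZB$ and that $\ZB$ is permutatively equivalent to a subbasis of $\XB$, I invoke the Schr\"oder--Bernstein principle for unconditional bases: two unconditional bases each of which is permutatively equivalent to a subbasis of the other, with uniformly controlled equivalence and unconditionality constants, are permutatively equivalent to each other. This Cantor--Bernstein step is by now standard in the uniqueness-of-unconditional-basis literature (it is the mechanism behind results such as those of Wojtaszczyk and of Bourgain et al.), and it is valid in the quasi-Banach category provided all the constants are uniform, which they are here because they depend only on $K$, the modulus of concavity, and $\XB$. Combining the two one-sided dominations through this principle gives that $\XB$ and $\ZB$ are permutatively equivalent, which is exactly the assertion that $\XX$ has a unique unconditional basis.

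The main obstacle I anticipate is the bookkeeping of uniformity of constants across the two applications of power universality and the Schr\"oder--Bernstein step, together with the somewhat delicate claim that power universality transfers from $\XB$ to $\ZB$. The definition of power universality is stated for a fixed basis $\XB$, so to run the symmetric second application I must either verify directly that power universality is preserved under permutative equivalence and passage to subbases (the excerpt already notes subbases inherit it), or reorganize the argument so that only the power universality of $\XB$ is ever used---for instance, by embedding both dominations into a single combinatorial scheme on $\XB^s$ and its subbases. The passage from permutative equivalence with the square to permutative equivalence with arbitrary finite powers $\XB^s$ is routine, but I will need to track that the equivalence constant for $\XB\sim\XB^s$ grows in a controlled way with $s$ so that the final constants remain finite.
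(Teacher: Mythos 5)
Your overall skeleton matches the paper's proof: first get $\ZB$ permutatively equivalent to a subbasis of $\XB^{s}\sim\XB$, then swap roles using the fact that power universality passes to subbases and across permutative equivalence, and finish with the Schr\"oder--Bernstein principle for unconditional bases. The first application of power universality and the swap are fine. But there is a genuine gap in your second step: you claim that $\ZB$ is permutatively equivalent to its square ``since the two bases are permutatively comparable,'' and you use this to pass from ``$\XB$ is permutatively equivalent to a subbasis of $\ZB^{t}$'' to ``$\XB$ is permutatively equivalent to a subbasis of $\ZB$.'' At that stage of the argument all you know is that $\ZB$ is permutatively equivalent to a \emph{subbasis} $\XB'$ of $\XB$; you do not know $\ZB\sim\XB$ (that is precisely the conclusion of the lemma), and a subbasis of $\XB$ need not be permutatively equivalent to its own square even when $\XB$ is. So the hypothesis $\XB\sim\XB^{2}$ does not transfer to $\ZB$, and the reduction $\ZB^{t}\leadsto\ZB$ is unjustified; trying to derive $\ZB\sim\ZB^{2}$ from the two one-sided dominations you have also fails, since Schr\"oder--Bernstein would then require $\ZB^{2}$ to dominate into $\ZB$, which is again what is missing.

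The paper closes exactly this gap differently, and the difference is essential. From $\XB$ permutatively equivalent to a subbasis of $\ZB^{t}$, it uses only the hypothesis on $\XB$ to write $\XB^{t}\sim\XB$ (an easy induction from $\XB\sim\XB^{2}$, as you note), hence $\XB^{t}$ is permutatively equivalent to a subbasis of $\ZB^{t}$, and then invokes the \emph{power-cancelling principle} for unconditional bases (Theorem 2.3 of the cited paper of Albiac--Ansorena on permutative equivalence of squares): if $\XB^{t}$ is permutatively equivalent to a subbasis of $\ZB^{t}$, then $\XB$ is permutatively equivalent to a subbasis of $\ZB$. This is a nontrivial theorem, not bookkeeping, and it is the ingredient your argument silently needs. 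With it, both one-sided dominations hold and Wójtowicz's Schr\"oder--Bernstein principle finishes the proof as you intended; note also that this last principle is qualitative in the $F$-space setting, so the uniformity-of-constants worries you raise at the end are not where the difficulty lies.
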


\begin{proof}
Let $\YB$ be a normalized basis of $\XX$. Then there is $s\in\NN$ such that $\YB$ is permutatively equivalent to a subbasis of $\XB^s$, which, in turn, is permutatively equivalent to $\XB$. This allows us to swap the roles of $\XB$ and $\YB$ and thus claim that $\XB$ is permutatively equivalent to a subbasis of $\YB^t$ for some $t\in\NN$. The power $\XB^t$ of $\XB$ is permutatively equivalent to a subbasis of $\YB^t$. Hence, by the power-cancelling principle for uncondional bases (see \cite{AlbiacAnsorena2022b}*{Theorem 2.3}) $\XB$ is permutatively equivalent to a subbasis of $\YB$. By the Schr\"oder-Bernstein principle for unconditional bases (see \cite{Wojtowicz1988}*{Corollary 1}), $\XB$ and $\YB$ are permutatively equivalent.
\end{proof}

Lemma~\ref{lem:PUvsUTAP} brings our attention to the convenience of obtaining conditions that ensure that a given direct sum of bases is permutatively equivalent to it square. In this regard, we refer the reader to \cite{AlbiacAnsorena2022c}*{Lemma~3.4}. Besides, as it applies to Hardy spaces, we bring up a generalization of a result from \cite{AlbiacAnsorena2023} for studying some aspects related to the unconditional structure of Tsirelson's space. Given a one-to-one map $\pi\colon\Nt\to\Jt$ we define the \emph{shift} $S_\pi\colon \FF^\Nt \to \FF^\Jt$ by
\[
(a_n)_{n\in\Nt}\mapsto (b_j)_{j\in \Jt}, \quad b_j=\begin{cases} a_n & \mbox{ if }j=\pi(n),\\ 0 & \mbox{ otherwise.} \end{cases}
\]
We say that a sequence space over $\NN$ is \emph{square-stable} if there is a one-to-one map $\pi\colon \{1,2\}\times \NN\to\NN$ such that $\pi(i,\cdot)$ is increasing, $i=1$, $2$, and $S_\pi$ restricts to an isomorphic embedding from $\LL^2$ into $\LL$.

A sequence space $\LL$ over $\NN$ is said to be \emph{subsymmetric} if the map $S_\pi$ defines an isomorphic embedding for every increasing map $\pi\colon\NN\to\NN$. While any subsymmetric sequence space is square-stable, Tsirelson's space $\Ts$ witnesses that the converse does not hold.

\begin{proposition}\label{prop:DSSquare}
Let $\LL$ be a square-stable sequence space. For each $j\in\NN$, let $\XX_j$ be a quasi-Banach space with an $K$-unconditional basis $\XB_j$, $1\le K<\infty$. Assume that there is a constant $C$ such that $\XB_j$ is permutatively $C$-equivalent to a subbasis of $\XB_k$ for all $(j,k)\in\NN^2$ with $j\le k$.
Then $(\bigoplus_{j=1}^\infty \XB_j)_\LL$ is permutativety equivalent to its square.
\end{proposition}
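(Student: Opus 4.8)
The goal is to show that $\XB:=(\bigoplus_{j=1}^\infty \XB_j)_\LL$ is permutatively equivalent to its square $\XB^2=(\bigoplus_{j=1}^\infty \XB_j)_\LL^2$. The natural strategy is to exhibit an explicit bijection between the index sets that transports $\XB^2$ onto a relabelling of $\XB$, and to verify that this bijection is an equivalence by controlling the norms on both sides via the lattice $\LL$. The key structural input is square-stability of $\LL$: there is a one-to-one $\pi\colon\{1,2\}\times\NN\to\NN$, increasing in the second variable, such that $S_\pi\colon\LL^2\hookrightarrow\LL$ is an isomorphic embedding. The idea is to place the two copies of the direct sum into disjoint ``streams'' inside a single copy of the direct sum, using $\pi$ to decide which coordinate $j\in\NN$ receives the material from copy $i\in\{1,2\}$.

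\textbf{First step: build the candidate bijection.} I would index $\XB^2$ by $\{1,2\}\times\bigl(\bigcup_{j}\{j\}\times\Nt_j\bigr)$, where the pair $(i,(j,n))$ labels the vector $\xx_{j,n}$ sitting in the $i$-th copy and the $j$-th summand. Using $\pi$, send $(i,(j,n))$ to the summand $\pi(i,j)$ of a single copy of $\XX$. Since $\pi(i,\cdot)$ is increasing, for fixed $i$ the index $j$ maps into a sparse increasing subsequence of $\NN$, and the two values $i=1,2$ land in disjoint sets of summands (as $\pi$ is one-to-one). Inside the summand $\pi(i,j)$ we must realise the basis $\XB_j$; this is where the domination hypothesis enters. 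By assumption, for each $(j,k)$ with $j\le k$ the basis $\XB_j$ is permutatively $C$-equivalent to a subbasis of $\XB_k$. Since $\pi(i,j)\ge j$ (because $\pi(i,\cdot)$ is increasing and one-to-one into $\NN$, forcing $\pi(i,j)\ge j$), we have $j\le\pi(i,j)$, so $\XB_j$ is $C$-equivalent to a subbasis of $\XB_{\pi(i,j)}$. Composing these summand-wise permutative equivalences with the bijection induced by $\pi$ defines the global bijection $\sigma$ that I claim implements the equivalence $\XB^2\sim\XB$.

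\textbf{Second step: verify the two-sided estimate.} To see that $\sigma$ is an equivalence, I would compute the norm of a finitely supported vector on each side. On the $\XB^2$ side, the norm is the max of the two copy-norms, each of which is computed in $\LL$ from the summand norms $(\norm{\cdot}_{\XX_j})_j$. On the $\XB$ side, after applying $\sigma$, the summand norms are arranged along the two streams $\pi(1,\cdot)$ and $\pi(2,\cdot)$, and up to the uniform constant $C$ from the summand-wise $C$-equivalences the summand norm in position $\pi(i,j)$ matches the $j$-th summand norm of copy $i$. Thus the $\XB$-side norm equals (up to the factor $C$ and the uniform unconditionality constant $K$) the $\LL$-norm of the vector $S_\pi$ applied to the pair of coordinate-sequences coming from the two copies. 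Square-stability of $\LL$ gives that $S_\pi\colon\LL^2\to\LL$ is bounded below and above, which converts the max of the two $\LL$-norms into a single $\LL$-norm comparably. Combining these comparisons yields constants, depending only on $C$, $K$, and the embedding constants of $S_\pi$, witnessing that $\XB^2$ and $\XB$ are permutatively equivalent.

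\textbf{The main obstacle.} The bookkeeping is routine once the framework is set, so the genuine difficulty is pinning down the summand-level embeddings compatibly with the global permutation. Concretely, the subbasis of $\XB_{\pi(i,j)}$ to which $\XB_j$ is $C$-equivalent need not exhaust $\XB_{\pi(i,j)}$, so the vectors of $\XB_{\pi(i,j)}$ \emph{not} in the image of this embedding are left over and must be absorbed somewhere in the target labelling; since $\sigma$ is required to be a bijection, I would need to enlarge the scheme so that every basis vector of the single copy $\XX$ is hit exactly once. The clean way around this is to observe that $S_\pi$ being merely an isomorphic \emph{embedding} (not a surjection) already gives the inequality in the direction $\XB^2\lesssim\XB$, and then to obtain the reverse domination $\XB\lesssim\XB^2$ by a symmetric argument, invoking the Schröder--Bernstein and power-cancelling machinery available for unconditional bases (as used in Lemma~\ref{lem:PUvsUTAP}) rather than insisting on an exact bijection. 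I expect the cleanest write-up to establish two one-sided permutative dominations and then appeal to those cancellation principles to upgrade them to a genuine permutative equivalence.
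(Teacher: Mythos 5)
Your argument is correct, and it supplies in full the details that the paper leaves implicit: the paper's own proof is a one-line pointer to the argument of \cite{AlbiacAnsorena2023}*{Proposition 4.1}, stated there under the assumption that $\pi$ is a bijection, together with the remark that it survives when $\pi$ is merely one-to-one. Your write-up is exactly the needed adaptation, and the two substantive points are both handled correctly. First, since $\pi(i,\cdot)$ is increasing and one-to-one, indeed $\pi(i,j)\ge j$, so the hypothesis yields a permutative $C$-equivalence of $\XB_j$ with a subbasis of $\XB_{\pi(i,j)}$, and injectivity of $\pi$ makes these summand-wise embeddings assemble into an injection of index sets; second, the lattice property of $\LL$ (coordinatewise domination implies quasi-norm domination) upgrades the summand-wise $C$-comparability of the coordinate sequences to comparability of $\LL$-norms, after which the upper and lower bounds for $S_\pi\colon\LL^2\to\LL$ give the two-sided estimate, with constants depending only on $C$, $K$ and the embedding constants of $S_\pi$. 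Thus $\XB^2$ is permutatively equivalent to a subbasis of $\XB$. Your diagnosis of the obstacle is also the right one: neither $\pi$ nor the summand-wise subbasis embeddings are surjective, so no global bijection can be built directly, and the correct finish is the Schr\"oder--Bernstein principle for unconditional bases \cite{Wojtowicz1988}*{Corollary 1} --- the same tool the paper deploys in Lemma~\ref{lem:PUvsUTAP}, so you stay entirely within the paper's toolkit. Two small simplifications are available: the reverse comparison needs no ``symmetric argument,'' since $\XB$ is literally (isometrically) a subbasis of $\XB^2$ via the first copy; and the power-cancelling principle plays no role here --- mutual permutative equivalence to subbases plus Schr\"oder--Bernstein already closes the proof.
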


\begin{proof}
The argument used to prove \cite{AlbiacAnsorena2023}*{Proposition 4.1}, where it is assumed that $\pi$ is a bijection, works in this slightly more general setting.
\end{proof}

\section{Hilbert-valued lattices with a unique unconditional basis}\label{sect:HVLUTAP}\noindent
Theorem~\ref{thm:HpDTL} yields an atomic lattice structure for $H_p(\DD,\ell_2)$, $0<p\le 1$. Before proving that this structure is unique when $p<1$, we devote a few lines to show that the result does not hold when $p=1$. Indeed, for any $0<p<\infty$ and $0<q\le \infty$, the unit vector system $(\ee_j)_{j=0}^\infty$ of $\FF^{\NN_0}$ is a democratic basis of $\Ts_{p,q}$ with fundamental function equivalent to $\Phi_p:=(m^{1/p})_{m=1}^\infty$, that is,
\begin{equation}\label{eq:demlp}
\norm{\sum_{j\in A} \ee_j}_{\Ts_{p,q}} \approx \abs{A}^{1/p}, \quad A\subseteq \NN_0, \, \abs{A}<\infty,
\end{equation}
(see \cite{AABW2021}*{Proposition 11.13}). Bearing in mind that $\TL_{1,2}$ is lattice isomorphic to its square, we infer that if a subbasis of the canonical basis of $\Ts_{1,2}(\ell_2)$ is democratic with fundamental function equivalent to $\Phi_1$, then it is equivalent to a subbasis of the canonical basis of $\Ts_{1,2}$. It is known \cite{DHK2006} that $\Ts_{1,2}$ has an unconditional basis $\XB$ with fundamental function equivalent to $\Phi_1$ which is nonequivalent to any subbasis of the canonical basis of $\Ts_{1,2}$. Therefore, the direct sum of $\XB$ and the the canonical basis of $\Ts_{1,2}(\ell_2)$, which is an unconditional basis of an space isomorphic to $\Ts_{1,2}(\ell_2)$, is not equivalent to the canonical basis of $\Ts_{1,2}(\ell_2)$.

To prove Theorem~\ref{thm:main}, we will take advantage of the fact that $H_p(\DD)$ has a strongly absolute lattice structure. The term strongly absolute basis was coined in \cite{KLW1990}. Here we transfer this idea to atomic lattices. Loosely speaking, a sequence space $\LL$ over a countable set $\Nt$ is strongly absolute if $\LL\subseteq \ell_1(\Nt)$ but it is far from $\ell_1(\Nt)$, in the sense that if a set $V\subseteq \LL$ satisfies
\[
\norm{f}\le C \norm{f}_1, \quad f\in V,
\]
for some constant $C$, then there is a further constant $D$ such that
\[
\norm{f}_1\le D \norm{f}_\infty, \quad f\in V.
\]
We can formalize this feature of some non-locally convex sequence spaces by saying that $\LL$ is \emph{strongly absolute} if for every $\varepsilon>0$ there is a constant $C\in(0,\infty)$ such that
\begin{equation*}
\norm{f}_1 \le\max \enbrace{ C \norm{f}_\infty , \varepsilon \norm{f}}, \quad f\in\LL.
\end{equation*}

We refer the reader to \cite{AlbiacAnsorena2022c} for a systematic study of strongly absolute bases and spaces.

For further reference, we record an elementary duality result for vector-valued quasi-Banach lattices.

\begin{lemma}\label{lem:VVLDualEnvelope}
Let $\LL$ be a sequence space over a countable set $\Nt$. Assume that $\LL\subseteq\ell_1(\Nt)$, and let $C$ be the norm of the inclusion map. Let $(\XX_n)_{n\in\Nt}$ be a family of quasi-Banach spaces with modulus of concavity uniformly bounded. Then:
\begin{enumerate}[label=(\roman*), leftmargin=*, widest=ii]
\item The Banach envelope of $\XX$ is $C$-isomorphic to $\enpar{\oplus_{n\in\Nt} \widehat{\XX_n}}_{\ell_1}$ via the map
\[
(f_n)_{n\in\Nt} \mapsto (J_{\XX_n} (f_n))_{n\in\Nt}.
\]
\item\label{it:VVLDual} The dual space of $\XX:=(\oplus_{n\in\Nt} \XX_n)_\LL$ is $C$-isomorphic to $\YY:=(\oplus_{n\in\Nt} \XX_n^*)_{\ell_\infty}$ via the dual pairing
\[
\enangle{f^*,g} =\sum_{n\in\Nt} f_n^*(f_n), \quad f^*= (f_n^*)_{n\in\Nt}\in\YY, \, f= (f_n)_{n\in\Nt}\in\XX.
\]
\end{enumerate}
\end{lemma}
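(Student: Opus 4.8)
The plan is to establish part~(ii) first and then read off part~(i) from it. The starting observation, used throughout, is that since $\LL\subseteq\ell_1(\Nt)$ with inclusion norm $C$ and the unit vectors of $\LL$ are normalized, the identity is a bounded embedding $\XX\hookrightarrow\enpar{\oplus_{n\in\Nt}\XX_n}_{\ell_1}$ of norm at most $C$; equivalently $\sum_{n\in\Nt}\norm{f_n}_{\XX_n}\le C\norm{f}_\XX$ for every $f=(f_n)_{n\in\Nt}\in\XX$. In particular the pairing $\enangle{f^*,f}=\sum_{n\in\Nt}f_n^*(f_n)$ converges absolutely, so all the maps below are well defined.

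For part~(ii) I would introduce the two natural maps $T\colon\YY\to\XX^*$ and $R\colon\XX^*\to\YY$ given by $T(f^*)=\enangle{f^*,\cdot}$ and $R(\varphi)=\enpar{\varphi(\cdot\,\ee_n)}_{n\in\Nt}$, where $x\,\ee_n\in\XX$ is the vector whose $n$th slot is $x\in\XX_n$ and whose other slots vanish, so that $\norm{x\,\ee_n}_\XX=\norm{x}_{\XX_n}$. The estimate $\abs{\enangle{f^*,f}}\le\enpar{\sup_n\norm{f_n^*}}\sum_n\norm{f_n}\le C\norm{f^*}_\YY\norm{f}_\XX$ shows $\norm{T}\le C$, while $\abs{\varphi(x\,\ee_n)}\le\norm{\varphi}\,\norm{x}_{\XX_n}$ shows $\norm{R}\le1$. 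A one-line computation gives $R\circ T=\Id_\YY$, whence $T$ is an isomorphic embedding obeying $\norm{f^*}_\YY\le\norm{T(f^*)}_{\XX^*}\le C\norm{f^*}_\YY$. It then remains to see that $T$ is onto, i.e.\ that $T\circ R=\Id_{\XX^*}$: for $\varphi\in\XX^*$ the functional $T(R(\varphi))$ agrees with $\varphi$ on every finitely supported vector, and the two coincide once we know that the $c_{00}(\Nt)$-supported vectors are dense in $\XX$.

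With part~(ii) available, part~(i) follows from the realization of the Banach-envelope norm as the bidual norm, $\norm{f}_{\widehat{\XX}}=\sup\{\abs{\varphi(f)}\colon\varphi\in\XX^*,\ \norm{\varphi}_{\XX^*}\le1\}$. Substituting the identification $\XX^*\cong\YY$ and using $\norm{f_n}_{\widehat{\XX_n}}=\sup\{\abs{f_n^*(f_n)}\colon\norm{f_n^*}_{\XX_n^*}\le1\}$, the supremum decouples across coordinates, because the constraint $\sup_n\norm{f_n^*}\le1$ permits optimizing each slot independently; this yields $\tfrac1C\sum_n\norm{f_n}_{\widehat{\XX_n}}\le\norm{f}_{\widehat{\XX}}\le\sum_n\norm{f_n}_{\widehat{\XX_n}}$. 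Since the finitely supported vectors with entries in the ranges $J_{\XX_n}(\XX_n)$ are dense in $\enpar{\oplus_n\widehat{\XX_n}}_{\ell_1}$, the map $(f_n)\mapsto\enpar{J_{\XX_n}(f_n)}$ then extends to a $C$-isomorphism onto $\enpar{\oplus_n\widehat{\XX_n}}_{\ell_1}$, which is exactly part~(i).

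The norm bounds and the identity $R\circ T=\Id_\YY$ are routine bookkeeping of two mutually inverse maps. The one step deserving genuine care, and the point I expect to be the main obstacle, is the surjectivity of $T$ in part~(ii)---equivalently, that the coordinate functionals are norming for $\XX$. This is precisely where the structure of $\LL$ must be used: it holds because the unit vector system is a basis of $\LL$, so that $c_{00}(\Nt)$ is dense in $\XX$. Once that density is in hand, both parts close up immediately.
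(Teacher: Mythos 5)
The paper itself records this lemma without proof (it is introduced as ``an elementary duality result'' and used later), so there is no argument of the authors to compare yours against; yours is certainly the natural one, and the bookkeeping in it --- the maps $T(f^*)=\enangle{f^*,\cdot}$ and $R(\varphi)=(\varphi(\cdot\,\ee_n))_{n\in\Nt}$, the bounds $\norm{T}\le C$ and $\norm{R}\le 1$, the identity $R\circ T=\Id$, and the phase-alignment decoupling that converts the bidual-norm description of the envelope into the $\ell_1$-sum --- is all correct. The problem is the step you yourself single out as the crux. You justify the density of the finitely supported vectors in $\XX$ by asserting that ``the unit vector system is a basis of $\LL$.'' Under the paper's own definitions this is not a hypothesis: a \emph{sequence space} is only required to have the unit vectors as a normalized $1$-unconditional \emph{basic sequence}, and the paper explicitly reserves the term \emph{minimal} for the stronger property that $c_{00}(\Nt)$ is dense in $\LL$. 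So as written you have assumed precisely the point at issue, and the surjectivity of $T$ (hence also the upper estimate and the density argument in part~(i), both of which quote part~(ii)) is left unproved.

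Nor does the hypothesis $\LL\subseteq\ell_1(\Nt)$ repair this by a routine estimate in the generality of the lemma. If $\LL$ is locally convex the gap closes: for $f\in\XX$ the series $\sum_{n}\norm{f_n}_{\XX_n}\,\ee_n$ converges absolutely in the Banach lattice $\LL$, and since the inclusion $\LL\subseteq\ell_1(\Nt)$ is continuous the sum must be the coordinatewise one, so tails of $f$ vanish in $\XX$ and your argument goes through verbatim. But the lemma is stated for quasi-Banach lattices, and in its intended application ($\LL$ is $L$-convex, e.g.\ $p$-convex with $p<1$) an $r$-norm only yields $\norm{f-P_Af}^r\le\sum_{n\notin A}\norm{f_n}^r$, while the inclusion in $\ell_1$ gives control of $(\norm{f_n})_n$ in $\ell_1$, not in $\ell_r$; so the tail estimate does not follow, and one cannot invoke Hahn--Banach either to rule out a functional vanishing on $c_{00}(\Nt)$. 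Note that non-minimal sequence spaces are genuinely in the orbit of this paper --- the closing remark about replacing $\TL_{p,\infty}$-type spaces by ``their separable parts if $q=\infty$'' acknowledges exactly this phenomenon. The clean fix is to add (or verify for the concrete $\LL$ at hand, e.g.\ $\TL_{p,q}$ with $q<\infty$, where it is immediate from the Fatou/dominated-convergence description of the quasi-norm) that $\LL$ is minimal; with that in hand every remaining step of your proposal is sound.
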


\begin{theorem}\label{thm:Ll2PU}
Let $\LL$ be an strongly absolute $L$-convex sequence space over a countable set $\Jt$. Then the unit vector system of $\LL(\ell_2)$ is power universal.
\end{theorem}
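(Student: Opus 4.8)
The plan is to first replace $\ZB$ by the lattice it induces through the square function, then to recognize that this lattice is governed by purely scalar data, and finally to realize it as a subbasis of a bounded power of $\YB$. To start, I would record that $\LL(\ell_2)$ is $L$-convex: since $\LL$ is $L$-convex it is lattice $r$-convex for some $0<r\le 2$, and as $\ell_2$ is lattice $r$-convex with constant one for such $r$, a routine check (cf.\ the direct-sum construction in Section~\ref{sect:AAMethod}) shows $\LL(\ell_2)$ is lattice $r$-convex, hence $L$-convex. Because $\YB$ is a subbasis of the $1$-unconditional unit vector system of $\LL(\ell_2)$, its closed span $[\YB]$ is complemented, so $\ZB$ is a complemented unconditional basic sequence in $\LL(\ell_2)$ with $K[\ZB,\ZB^*]\lesssim K$. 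Corollary~\ref{cor:LTImproved} then produces a constant $C_1=C_1(K,\LL)$ for which $\ZB$ is $C_1$-equivalent to the unit vector system of the latticefication $\LL(\ell_2)_\ZB$.

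The key observation is that $\LL(\ell_2)_\ZB$ collapses to scalar data. Writing $\zz_k=(\zz_{k,j})_{j\in\Jt}$ with $\zz_{k,j}\in\ell_2$ and setting $b_{k,j}=\norm{\zz_{k,j}}_{\ell_2}^2$, the inner square function of $\ell_2$-vectors is the $\ell_2$-norm of their concatenation, so a direct computation gives, for $(a_k)_k\in c_{00}$,
\[
\rho_{\ZB}\enpar{(a_k)_k}=\norm{\enpar{\enpar{\sum_k|a_k|^2\, b_{k,j}}^{1/2}}_{j\in\Jt}}_\LL .
\]
Consequently, if $\tilde\ZB=(\tilde\zz_k)_k$ is any family in the \emph{scalar} lattice $\LL$ with $|\tilde\zz_k|=(\sqrt{b_{k,j}})_{j\in\Jt}$, then $\LL(\ell_2)_\ZB=\LL_{\tilde\ZB}$ isometrically, and each $\tilde\zz_k$ is a normalized vector of $\LL$. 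Thus the whole problem has been transferred to $\LL$, the Hilbertian degrees of freedom surviving only as multiplicity bookkeeping: since $\ZB\subseteq[\YB]$, for every $j$ the vectors $(\zz_{k,j})_k$ lie in $\ell_2(\Mt_j)$, so block $j$ may carry at most $\abs{\Mt_j}$ mutually orthogonal $\ell_2$-directions.

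It remains to realize $\LL_{\tilde\ZB}$, together with this multiplicity data, as a subbasis of $\YB^s$ for some $s=s(K,\LL)$. Up to isomorphism $\YB^s$ is $(\oplus_{j\in\Jt}\ell_2(\Mt_j))_\LL$ with $s$ independent copies, so a subbasis amounts to an injective assignment $k\mapsto(i_k,j_k,n_k)$ of each $\zz_k$ to a free slot $n_k\in\Mt_{j_k}$ in one of the copies, the induced quasi-norm being $\max_{1\le i\le s}\norm{(j\mapsto(\sum_{k:\,i_k=i,\,j_k=j}|a_k|^2)^{1/2})_j}_\LL$; the goal is to choose the assignment so that this quasi-norm is $C$-equivalent to $\rho_\ZB$. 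This is exactly where strong absoluteness enters. A single normalized $\tilde\zz_k$ may be spread across arbitrarily many blocks (think of a long flat profile), and such a profile is handled painlessly on its own; what strong absoluteness forbids is this spreading accumulating across different $k$ in a way that would require unboundedly many copies. Quantitatively, for every $\varepsilon$ it converts smallness of the $\ell_\infty$-part of a profile into smallness of its $\ell_1$-part relative to the $\LL$-norm, which is the lever for peeling the profiles into a bounded number $s=s(\varepsilon,\LL)$ of layers with $\LL(\ell_2)_\ZB$-negligible off-layer remainders; each layer is then routed into its own copy, and the multiplicity bound coming from $\ZB\subseteq[\YB]$ guarantees that every layer finds enough free slots inside $\YB$.

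The hard part will be the quantitative peeling/matching lemma carried out without local convexity: one must simultaneously (i) bound the number of copies $s$ solely in terms of $K$ and the strong-absoluteness and $L$-convexity constants of $\LL$, (ii) control the \emph{aggregated} off-layer cross-terms in the quasi-norm of $\LL$, where the usual triangle-inequality bookkeeping is unavailable, and (iii) respect the per-block multiplicity constraints so that the layers embed as an honest subbasis of $\YB^s$ rather than merely as an isomorphic copy. Conceptually, the reason a uniform $s$ can exist is that, by Lemma~\ref{lem:VVLDualEnvelope}, the Banach envelope of $\LL(\ell_2)$ is $\ell_1(\ell_2)$, a space with a unique unconditional basis, and strong absoluteness is precisely the tool that transfers this envelope-level rigidity back to $\LL(\ell_2)$; turning that rigidity into the explicit quasi-norm estimates above is the delicate step.
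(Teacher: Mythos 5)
Your opening reduction is sound and, as far as it goes, matches the paper: $\LL(\ell_2)$ is $L$-convex, $\ZB$ is complemented in the whole space with constant controlled by $K$, Corollary~\ref{cor:LTImproved} replaces $\ZB$ by the unit vector system of its latticefication, and your formula $\rho_{\ZB}\enpar{(a_k)_k}=\norm{\enpar{\enpar{\sum_k\abs{a_k}^2 b_{k,j}}^{1/2}}_{j\in\Jt}}_\LL$ is correct. The genuine gap is everything after that: the ``quantitative peeling/matching lemma'' you defer is not a technical remainder but the entire content of the theorem, and the lever you propose for it cannot supply it. Applying strong absoluteness directly to the normalized profiles $\tilde{\zz}_k$ yields nothing, because normalization gives only the upper bound $\norm{\tilde{\zz}_k}_\LL=1$ and no lower bound on $\norm{\tilde{\zz}_k}_1$: for a flat profile such as $f=N^{-1/p}\sum_{j\in A}\ee_j$ in $\LL=\ell_p$, $0<p<1$, $\abs{A}=N$, one has $\norm{f}_\LL=1$ while \emph{both} $\norm{f}_\infty=N^{-1/p}$ and $\norm{f}_1=N^{1-1/p}$ tend to $0$, so the inequality $\norm{f}_1\le\max\enbrace{C\norm{f}_\infty,\varepsilon\norm{f}}$ holds vacuously on it. Strong absoluteness therefore does not forbid spreading, individually or in aggregate, at the level of profiles; there is no visible way to extract layers with negligible off-layer remainders, nor a uniform $s=s(K,\LL)$, and nothing in your argument prevents $\LL_{\tilde{\ZB}}$ from failing to have the block form $\enpar{\oplus_{j\in\Jt}\ell_2^{\psi(j)}}_\LL$ altogether. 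Your per-block multiplicity count is likewise unproved: the vectors $\zz_{k,j}$ need not be orthogonal, so ``at most $\varphi(j)$ orthogonal directions'' does not bound how many $k$ get routed to block $j$.

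What makes the theorem true is a duality step you never take: strong absoluteness must be applied not to the profiles of the $\zz_k$ but to the blockwise pairings with the projecting functionals. Identifying the dual of $[\YB]$ with an $\ell_\infty$-sum via Lemma~\ref{lem:VVLDualEnvelope} (this is legitimate because strong absoluteness forces $\LL\subseteq\ell_1$), set $f_{j,k}=P_j'\enpar{S(\zz_k^*)}\enpar{P_j(\zz_k)}$. Biorthogonality gives the crucial lower bound $\norm{(f_{j,k})_{j}}_1\ge\abs{\zz_k^*(\zz_k)}=1$, while $\norm{(f_{j,k})_{j}}_\LL\le K\norm{S}$; strong absoluteness now bites and produces $\delta=\delta(K,\LL)>0$ and, for each $k$, a block $j(k)$ with $\abs{f_{j(k),k}}\ge\delta$. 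This single step localizes each basis vector and removes any need for peeling: the full Theorem~\ref{thm:LTImproved} (not merely Corollary~\ref{cor:LTImproved}; its dominated-family flexibility exists precisely for this) applied with the one-block compressions $P_{j(k)}(\zz_k)$, which satisfy $\abs{P_{j(k)}(\zz_k)}\le\abs{\zz_k}$ and $\abs{\zz_k^*(P_{j(k)}(\zz_k))}\ge\delta$, shows $\ZB$ is equivalent to the unit vector system of the latticefication of the compressed family, which is visibly lattice isomorphic to $\enpar{\oplus_{j\in\Jt}\ell_2^{\abs{\Mt_j}}}_\LL$ with $\Mt_j=\enbrace{k\colon j(k)=j}$. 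Finally the multiplicity bound, your point (iii), comes from the bilinear estimate of Lemma~\ref{lem:bilinear}: $\delta\abs{\Mt_j}\le\sum_{k\in\Mt_j}\abs{f_{j,k}}\le K\varphi(j)$ (up to the fixed constant $\norm{S}$), whence $\abs{\Mt_j}\lesssim_{K,\LL}\varphi(j)$ and a uniform power $s$ suffices. Your closing envelope heuristic points in the right direction but is not a substitute for this argument.
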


\begin{proof}
Let us use the convention that $\ell_2^\infty=\ell_2$ and $\ell_2^0=\{0\}$. We must prove that for every map $\varphi\colon\NN\to \NN\cup\{0,\infty\}$ and every complemented normalized unconditional basic sequence $(\yy_m)_{m\in\Mt}$ with projecting functionals $(\yy_m^*)_{m\in\Mt}$ in
\[
\XX:=\enpar{ \bigoplus_{j\in\Jt} \ell_2^{\varphi(j)}}_\LL
\]
there are $C\in[1,\infty)$, $s\in\NN$ and $\psi\colon\NN\to \NN\cup\{0,\infty\}$ with $\psi\le s \varphi$, in such a way that $\XB$ is $C$-equivalent to the unit vector system of $\enpar{ \bigoplus_{j\in\Jt} \ell_2^{\psi(j)}}_\LL$, where $s$ and $C$ only depend on $\LL$ and $K:=K[\XB,\XB^*]$.

For each $j\in\Nt$ let
\[
L_j\colon\XX_j:=\ell_2^{\varphi(j)} \to \XX, \quad L_j'\colon \XX_j^*\to \XX':=\enpar{ \bigoplus_{j\in\Jt} \XX_j^*}_{\ell_\infty(\Nt)}
\]
be the canonical lifting, and let $P_j\colon\XX\to \XX_j$, $P_j'\colon\XX'\to \XX_j$ be the canonical projections. Let $S\colon \XX^*\to \XX'$ be the isomorphism provided by Lemma~\ref{lem:VVLDualEnvelope}~\ref{it:VVLDual}. For $m\in\Mt$ put
\[
f_m= (f_{j,m})_{j\in\Jt}=\enpar{P_j'( S(\yy_m^*)) (P_j(\yy_m))}_{j\in\Jt}.
\]
Since $\norm{\yy_m^*}\le K$ for all $m\in\Mt$,
\begin{equation}\label{eq:LargeCoef}
\abs{f_{j,m}} \le K \norm{S} \norm{P_j(\yy_m)}, \quad j\in\Jt, \, m\in\Mt.
\end{equation}
Consequently,
\[\norm{f_m}_\LL\le K \norm{S} \norm{\yy_m}= K\norm{S},\quad m\in\Nt.
\] In turn, since
\[
1=\yy_m^*(\yy_m)=\sum_{j\in \Jt } f_{j,m},
\]
we have $\norm{f_m}_1\ge 1$ for all $m\in\Mt$. Hence, there is a constant $\delta=\delta(K,\LL)$ such that
\[
\max_{j\in\Jt} \abs{f_{j,m}}\ge \delta, \quad m\in\Mt.
\]
For each $m\in\Mt$ pick $j(m)\in\Jt$ such that $ \abs{f_{j(m),m}} \ge \delta$. Put
\[
\ZB=(\zz_m)_{m\in\Mt}=\enpar{L_{j(m)} ( P_{j(m)}(\yy_m))}_{m\in\Mt}.
\]
We have $\abs{\zz_m} \le \abs{\yy_m}$ and $\abs{\yy_m^*(\zz_m)}\ge \delta$ for all $m\in\Mt$. By Lemma~\ref{thm:LTImproved}, $\YB$ is equivalent to the unit vector system of $\LL_\ZB$.

Let $(\Mt_j)_{j\in \Jt}$ be the partition of $\Mt$ defined by $m\in\Mt_j$ if $j(m)=j$. If we set
\[
\ZB_j=(P_j(\yy_m))_{m\in\Mt_j},
\]
then $\LL_\ZB$ is lattice isometric to $(\oplus_{j\in \Jt} \LL_{\ZB_j})_\LL$. Since
\[
M^{(2)}(\XX_j)=M_{(2)}(\XX_j)=1,\quad j\in \Jt,
\]
we have
\[
\norm {(a_m)_{m\in\Mt_j}}_{\ZB_j}
=\enpar{\sum_{m\in\Mt_j} \abs{a_m}^2 \norm{P_j(\yy_m)}^2}^{1/2}
\]
for all $(a_m)_{m\in\Mt_j}\in c_{00}(\Mt_j)$.
Pick $j\in\Jt$ and $m\in\Mt$. Then, $\norm{P_j(\yy_m)} \le \norm{\yy_m} = 1$ and, by \eqref{eq:LargeCoef},
\[
\delta \le f_{j,m} \le K \norm{S} \norm{P_j(\yy_m)}.
\]
We infer that $\LL_\ZB$ is lattice isomorphic to $\enpar{ \bigoplus_{j\in\Jt} \ell_2^{\abs{\Mt_j}}}_\LL$.

We conclude the proof by estimating the size of $\Mt_j$. Pick $j\in\Jt$. Taking into account Lemma~\ref{lem:bilinear} we obtain
\begin{align*}
\abs{\Mt_j} &\le \sum_{m\in\Mt_j} \frac{\abs{f_{j,m}}}{\delta}\\
&=\frac{1}{\delta} \sum_{m\in\Mt_j} \abs{\sum_{n=1}^{\varphi(j)} \ee^*_{n,j} (\yy_m) \yy_m^*(\ee_{n,j})}\\
&\le \sum_{n=1}^{\varphi(j)} \sum_{m\in\Mt_j} \abs{\ee^*_{n,j} (\yy_m) \yy_m^*(\ee_{n,j})}\\
&\le \sum_{n=1}^{\varphi(j)} K \norm{\ee_{n,j}} \norm{\ee_{n,j}^*}=K\varphi(j).\qedhere
\end{align*}
\end{proof}

\begin{corollary}\label{cor:main}
Let $\LL$ be a strongly absolute sequence space.
\begin{enumerate}[label=(\roman*), leftmargin=*, widest=ii]
\item $\LL(\ell_2)$ has unique unconditional basis.
\item If $\LL$ is square-stable then $(\oplus_{n=1}^\infty \ell_2^{\varphi(n)})_\LL$ has unique unconditional basis for each non-decreasing map $\varphi\colon\NN\to\NN$.
\end{enumerate}
\end{corollary}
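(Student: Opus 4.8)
The plan is to verify, in each case, the two hypotheses of Lemma~\ref{lem:PUvsUTAP}: that the unit vector system $\XB$ is power universal, and that it is permutatively equivalent to its square. First I would record that a strongly absolute sequence space is automatically $L$-convex (see \cite{AlbiacAnsorena2022c}), so that Theorem~\ref{thm:Ll2PU} is at our disposal throughout.

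For part (i), power universality of the unit vector system of $\LL(\ell_2)$ is precisely Theorem~\ref{thm:Ll2PU}. The crux is to show that this basis is permutatively equivalent to its square, and here I would exploit that each fibre $\ell_2$ is infinite-dimensional, so that $\ell_2\cong\ell_2\oplus_2\ell_2$ isometrically, and merge the two copies \emph{coordinatewise}, without touching $\LL$. Concretely, fix a bijection $\langle\cdot,\cdot\rangle\colon\{1,2\}\times\NN\to\NN$ and let $\Phi\colon\LL(\ell_2)\oplus\LL(\ell_2)\to\LL(\ell_2)$ send the basis vector indexed by $(i,j,n)$ to the one indexed by $(j,\langle i,n\rangle)$. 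Writing $u_j,v_j$ for the $\ell_2$-norms of the $j$-th blocks of the two summands of an element $f$, the square-norm is $\norm{f}=\max\{\norm{(u_j)_j}_\LL,\norm{(v_j)_j}_\LL\}$, whereas $\norm{\Phi(f)}=\norm{(\sqrt{u_j^2+v_j^2})_j}_\LL$. Since $\max\{u_j,v_j\}\le\sqrt{u_j^2+v_j^2}\le u_j+v_j$ pointwise, lattice monotonicity and the quasi-triangle law for $\LL$ give
\[
\max\{\norm{(u_j)_j}_\LL,\norm{(v_j)_j}_\LL\}\le\norm{\Phi(f)}\le 2C\max\{\norm{(u_j)_j}_\LL,\norm{(v_j)_j}_\LL\},
\]
where $C$ is the modulus of concavity of $\LL$. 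Thus $\Phi$ is an isomorphism carrying basis to basis, so $\XB\sim\XB^2$, and Lemma~\ref{lem:PUvsUTAP} closes this case.

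For part (ii) the square-equivalence is supplied by square-stability. Since $\varphi$ is non-decreasing, the unit vector basis of $\ell_2^{\varphi(j)}$ is $1$-equivalent to a subbasis of that of $\ell_2^{\varphi(k)}$ whenever $j\le k$, so Proposition~\ref{prop:DSSquare} applies with $\XX_j=\ell_2^{\varphi(j)}$ and shows that the unit vector system of $(\oplus_{n}\ell_2^{\varphi(n)})_\LL$ is permutatively equivalent to its square. For power universality I would argue that this basis is a subbasis of the unit vector system of $\LL(\ell_2)=(\oplus_n\ell_2)_\LL$ — restrict each infinite fibre to its first $\varphi(n)$ coordinates — which is power universal by Theorem~\ref{thm:Ll2PU}; as subbases inherit power universality, Lemma~\ref{lem:PUvsUTAP} again applies.

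The main obstacle is the square-equivalence in part (i): one must resist reindexing $\LL$ (which would demand that $\LL$ be square-stable, an assumption absent here) and instead merge the Euclidean fibres, noting that a two-term merge costs only the modulus of concavity rather than full lattice $2$-convexity — which $\LL$ need not possess, as $\ell_p$ for $p<1$ shows. The secondary point to pin down is that strongly absolute sequence spaces are indeed $L$-convex, which I would take from the literature.
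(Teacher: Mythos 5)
Your proposal follows the same skeleton as the paper's proof, which is the one-line combination of Theorem~\ref{thm:Ll2PU}, Lemma~\ref{lem:PUvsUTAP} and Proposition~\ref{prop:DSSquare}. Your part (ii) coincides with the paper's intent exactly: Proposition~\ref{prop:DSSquare} gives the square-equivalence (monotonicity of $\varphi$ yielding the nested-subbasis hypothesis), and power universality passes to the truncated subbasis of the unit vector system of $\LL(\ell_2)$, which the paper licenses by remarking that subbases inherit power universality; indeed the proof of Theorem~\ref{thm:Ll2PU} is formulated precisely for such truncated sums $\bigl(\oplus_j \ell_2^{\varphi(j)}\bigr)_\LL$. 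In part (i) you correctly notice that Proposition~\ref{prop:DSSquare} is not applicable as stated, since square-stability of $\LL$ is not assumed there, and you supply the missing square-equivalence by the fibrewise merge $\ell_2\oplus_2\ell_2\cong\ell_2$. Your two-sided estimate, using only lattice monotonicity of $\LL$ below and one application of the quasi-triangle inequality above (so costing only the modulus of concavity, with no $2$-convexity of $\LL$ needed), is correct, and it is exactly the elementary identification $\LL(\ell_2)^2\simeq\LL(\ell_2\oplus_2\ell_2)\simeq\LL(\ell_2)$ that the paper's terse citation silently elides for case (i). This filled-in detail is a genuine improvement in rigor over the printed proof.

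The one step you should not take on faith is your opening claim that a strongly absolute sequence space is \emph{automatically} $L$-convex. Neither this paper nor, as far as I can see, the cited reference \cite{AlbiacAnsorena2022c} establishes such an implication; Theorem~\ref{thm:Ll2PU} takes $L$-convexity as a separate hypothesis, and in the proof of Theorem~\ref{thm:maingeneral} the authors verify $L$-convexity of $\TL_{p,2}$ independently (via $p$-convexity) alongside strong absoluteness --- a check that would be redundant if the implication you invoke were known. Strong absoluteness does give a continuous inclusion $\LL\subseteq\ell_1$ (because $\norm{f}_\infty\le\norm{f}_\LL$ in any sequence space), but $L$-convexity does not pass through a merely continuous, non-isomorphic inclusion (Lemma~\ref{thm:LCNatural} requires a linear isomorphic embedding), so no argument along these lines closes the gap. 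To be fair, the corollary's own statement shares this elision --- it omits the $L$-convexity of $\LL$ that its proof, via Theorem~\ref{thm:Ll2PU}, actually uses --- and is best read with $L$-convexity as an implicit standing hypothesis. You should either state that hypothesis explicitly or supply a proof of the implication; as written, the citation is unsupported and this is the only genuine gap in an otherwise correct argument.
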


\begin{proof}
Just combine Theorem~\ref{thm:Ll2PU} with Lemma~\ref{lem:PUvsUTAP} and Proposition~\ref{prop:DSSquare}.
\end{proof}

We close the paper with the proof of the advertised Theorem~\ref{thm:main}.

\begin{proposition}\label{prop:TLSS}
The unit vector system of $\TL_{p,q}$, $0<p<\infty$, $0<q\le\infty$, is square stable.
\end{proposition}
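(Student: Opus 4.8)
The plan is to exploit the self-similarity of the dyadic tree: each half of $I$ carries, after rescaling, a faithful copy of the whole space $\TL_{p,q}$, so placing one copy of $\TL_{p,q}$ in each half should realize an embedding of the square. Concretely, I would write $I^-=[-1/2,0)$ and $I^+=[0,1/2)$ for the two halves of $I$ and introduce the increasing affine bijections $\phi^-\colon I\to I^-$, $\phi^-(t)=t/2-1/4$, and $\phi^+\colon I\to I^+$, $\phi^+(t)=t/2+1/4$. The point is that each $\phi^\pm$ maps $\Dt$ bijectively onto the family of dyadic intervals contained in $I^\pm$, with $\abs{\phi^\pm(J)}=\abs{J}/2$ and $\phi^\pm(J)\in\Dt_{j+1}$ whenever $J\in\Dt_j$, while $\phi^-(\Dt)$ and $\phi^+(\Dt)$ are disjoint with union $\Dt\setminus\{I\}$. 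Indexing $\TL_{p,q}$ by $\Dt$, I would define $\pi$ by sending the first copy of $\Dt$ onto $\phi^-(\Dt)$ via $\phi^-$ and the second onto $\phi^+(\Dt)$ via $\phi^+$, so that for $a=(a_1,a_2)\in\TL_{p,q}^2$ the element $S_\pi(a)$ is the sequence $\gamma=(\gamma_J)_{J\in\Dt}$ with $\gamma_I=0$, $\gamma_{\phi^-(J)}=a_{1,J}$ and $\gamma_{\phi^+(J)}=a_{2,J}$.

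The heart of the argument would be a scaling identity. Splitting the integral defining $\tau_{p,q}(S_\pi(a))$ over $I^-$ and $I^+$ (the entry $\gamma_I=0$ removes the contribution of the root, so only subintervals of the relevant half remain) and performing the change of variables $t=\phi^\pm(s)$, so $dt=ds/2$, in each piece, the factor $\abs{\phi^\pm(J)}^{-q/p}=2^{q/p}\abs{J}^{-q/p}$ produced by the rescaled intervals cancels against the Jacobian. I expect this to yield, for every $q\in(0,\infty]$ (with the usual supremum reading when $q=\infty$),
\[
\tau_{p,q}\enpar{S_\pi(a)}^p=\tau_{p,q}(a_1)^p+\tau_{p,q}(a_2)^p.
\]
Since $\norm{a}_{\TL_{p,q}^2}=\max\{\tau_{p,q}(a_1),\tau_{p,q}(a_2)\}$, the elementary inequalities between the $\ell_p$- and $\ell_\infty$-norms of a pair then give
\[
\norm{a}_{\TL_{p,q}^2}\le \tau_{p,q}\enpar{S_\pi(a)}\le 2^{1/p}\norm{a}_{\TL_{p,q}^2},
\]
so that $S_\pi$ restricts to an isomorphic embedding of $\TL_{p,q}^2$ into $\TL_{p,q}$.

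Finally I would check the structural requirements on $\pi$. Injectivity is immediate from the injectivity of each $\phi^\pm$ together with the disjointness of their ranges. To make each $\pi(i,\cdot)$ \emph{increasing} I would use the freedom, noted earlier, that $\TL_{p,q}$ does not depend on the natural arrangement of $\Dt$ chosen: taking the arrangement that lists intervals by non-increasing length and, within each generation $\Dt_j$, from left to right, the maps $\phi^\pm$ carry $\Dt_j$ into $\Dt_{j+1}$ preserving the left-to-right order, and every generation precedes all later ones, so both $\phi^-$ and $\phi^+$ become increasing for this arrangement. I expect the main obstacle to be not the analysis but precisely this bookkeeping: one must set up the self-similar identification of the two half-trees with $\Dt$ carefully and confirm that it is compatible with an honest increasing shift. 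The scaling identity itself is then a routine change of variables once the geometry is in place.
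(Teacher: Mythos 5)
Your construction is at heart the same as the paper's: the paper also realizes the square by pushing everything down one generation of the dyadic tree and occupying the two half-trees, via a levelwise bijection $\pi$ from $\Dt_j\times\{1,2\}$ onto $\Dt_{j+1}$ together with a one-generation shift $\sigma$; your affine maps $\phi^{\pm}$ are the natural instantiation of those maps (and in fact make explicit the tree-structure preservation that the paper's terse statement leaves implicit --- an arbitrary levelwise bijection would \emph{not} work, since $\tau_{p,q}$ is sensitive to the nesting pattern of supports). Your scaling identity is correct: for $t\in I^{\pm}$ every dyadic interval containing $t$ other than $I$ itself is contained in $I^{\pm}$, so the two halves decouple, and the factor $2^{q/p}$ cancels exactly against the Jacobian $1/2$ after taking the $L_p$-norm; your check that the generation-by-generation, left-to-right arrangement makes both $\pi(i,\cdot)$ increasing is also sound.

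There is, however, one genuine (if small) gap: the index set. In the paper's setup $\TL_{p,q}$ is a sequence space over $\NN_0$ with $J_0=J_1=I$, so there are \emph{two} coordinates weighted by $\chi_I$: coordinate $0$, corresponding to the constant function $\chi_I$, and coordinate $1$, corresponding to the Haar coordinate of $I$. By ``indexing $\TL_{p,q}$ by $\Dt$'' you silently discard coordinate $0$, so your $S_\pi$ is defined only on the square of the sublattice $\{\gamma\colon \gamma_0=0\}$, whereas square-stability requires an embedding of all of $\TL_{p,q}^2$. This is precisely the point the paper's proof handles with the auxiliary map $\sigma_0$, whose defining clause $\sigma_0(0)=1$ absorbs the constant coordinate into the Haar coordinate of $I$ before the doubling map is applied. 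The repair within your scheme is short: your images fill exactly $\Dt\setminus\{I\}$, leaving the two target coordinates $0$ and $1$ unused, and both carry the same weight $\chi_I$; so set $\pi(1,0)=0$ and $\pi(2,0)=1$. You then lose the exact identity, since the two root entries contribute $\enpar{\abs{a_{1,0}}^q+\abs{a_{2,0}}^q}^{1/q}\chi_I$, which does not rescale, but lattice monotonicity (for the lower bound, restrict to each half and drop either the root or the tree terms) together with the $p$-convexity of $\TL_{p,q}$ (for the upper bound) still yields a two-sided estimate with constants depending only on $p$, and the extended maps remain increasing: in your arrangement $\pi(1,\cdot)=(0,2,4,5,8,\dots)$ and $\pi(2,\cdot)=(1,3,6,7,12,\dots)$. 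With this amendment your argument is complete and follows the paper's route.
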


\begin{proof}
Let $\pi\colon \Dt\times\{1,2\}$ be a map so that $\pi$ is a bijection from $\Dt_j\times\{1,2\}$ onto $\Dt_{j+1}$ for each $j\in\NN_0$. Consider also a map $\sigma\colon \Dt \to \Dt$ such that $\sigma$ is one-to-one from $\Dt_j$ into $\Dt_{j+1}$ for each $j\in\NN_0$.

Let $(J_n)_{n=1}^\infty$ be a natural ordering of $\Dt$. If we define $\sigma_0\colon\NN_0\to \NN$ by $\sigma_0(0)=1$ and $J_{\sigma_0(n)}=\sigma(J_n)$ for all $n\in\NN$, then $S_{\sigma_0}$ is a an isomorphic embedding from $\TL_{r,q}$ into its sublattice $\LL$ consisting of sequences whose first coordinate is null. In turn, if we define $\pi_0 \colon \NN\times\{1,2\} \to \NN$ by $J_{\pi_0(n,\delta)} = \pi(J_n,\delta)$, $S_{\pi_0}$ is an isomorphism from $\LL^2$ onto $\LL$. Hence, $\pi_0\circ(\sigma_0,\sigma_0)$ induces an isomorphic embedding.
\end{proof}

\begin{theorem}\label{thm:maingeneral}
Let $0<p<\infty$ and $0<q\le \infty$. Then $\TL_{p,q}(\ell_2)$ has a unique unconditional basis. Besides, if $\varphi\colon\Dt\to\NN$ satisfies $\varphi(J)\le \varphi(K)$ whenever $\abs{K} \le \abs{J}$, then $\enpar{\oplus_{n=0}^\infty \ell_2^{\varphi(J_n)}}_{\TL_{p,q}}$, where $J_0=I$ and $(J_n)_{n=1}^\infty$ is a natural ordering of $\Dt$, also has a unique unconditional basis.
\end{theorem}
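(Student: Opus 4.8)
The plan is to derive both assertions from Corollary~\ref{cor:main}, whose two clauses are tailored precisely to the two statements we must prove. The only structural inputs needed about the ground lattice are that $\TL_{p,q}$ is $L$-convex, that it is strongly absolute, and that it is square-stable. The first is free: the paper already records that $\TL_{p,q}$ is lattice $p$-convex, hence $L$-convex. The third is exactly Proposition~\ref{prop:TLSS}. Thus everything hinges on the second input, namely that $\TL_{p,q}$ is a strongly absolute sequence space; granting this, the remainder of the argument is a matter of matching hypotheses.

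Assuming strong absoluteness of $\TL_{p,q}$, the first assertion is immediate from Corollary~\ref{cor:main}\,(i) applied with $\LL=\TL_{p,q}$. For the second assertion I would first reduce the $\Dt$-indexed weight $\varphi$ to a non-decreasing weight on the integers. Fix the natural ordering $(J_n)_{n=1}^\infty$ of $\Dt$ together with $J_0=I$, and set $\psi(n)=\varphi(J_n)$. Because the ordering is natural we have $\abs{J_{n+1}}\le\abs{J_n}$ for every $n$, and the monotonicity hypothesis on $\varphi$ then forces $\varphi(J_n)\le\varphi(J_{n+1})$; hence $n\mapsto\psi(n)$ is non-decreasing. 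After the harmless relabeling that identifies $\NN_0$ with $\NN$, the space $\enpar{\oplus_{n=0}^\infty\ell_2^{\varphi(J_n)}}_{\TL_{p,q}}$ is exactly $\enpar{\oplus_{n}\ell_2^{\psi(n)}}_{\TL_{p,q}}$ with $\psi$ non-decreasing, so Corollary~\ref{cor:main}\,(ii)---whose remaining hypothesis, square-stability of $\TL_{p,q}$, is supplied by Proposition~\ref{prop:TLSS}---yields uniqueness of its unconditional basis.

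The main obstacle is therefore the single lemma that $\TL_{p,q}$ is strongly absolute, which is where the non-local convexity is genuinely exploited. I would prove it by first establishing the interpolation-type bound
\[
\norm{\gamma}_1\le A\,\norm{\gamma}_\infty^{1-p}\,\tau_{p,q}(\gamma)^p,\qquad\gamma\in\TL_{p,q},
\]
with $A=A(p,q)$; a routine optimization over the two factors (which uses $p<1$) then converts this into the defining inequality $\norm{\gamma}_1\le\max\{C\norm{\gamma}_\infty,\varepsilon\norm{\gamma}\}$ for every prescribed $\varepsilon>0$. To prove the displayed bound I would exploit the dyadic tree structure: at each point $t\in I$ the intervals $J_n\ni t$ carrying a nonzero coordinate form a nested chain, one per generation, along which the weights $\abs{J_n}^{-q/p}$ grow geometrically. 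Consequently the inner sum defining $\tau_{p,q}$ is, up to a constant depending only on $p$ and $q$, governed by its largest term, which reduces the estimate to the single-generation situation, where $\TL_{p,q}$ is isometric to $\ell_p$ and the bound is elementary via $\abs{\gamma_n}\le\norm{\gamma}_\infty^{1-p}\abs{\gamma_n}^p$. The delicate point, and the part demanding real care, is to perform this geometric domination uniformly over all coordinate configurations rather than only on structured examples, controlling the interaction between generations so that the pointwise losses integrate to a bounded constant.
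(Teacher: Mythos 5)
Your top-level architecture is exactly the paper's: the paper proves this theorem by noting that $\TL_{p,q}$ is lattice $p$-convex (hence $L$-convex), square-stable (Proposition~\ref{prop:TLSS}), and strongly absolute, and then invoking Corollary~\ref{cor:main}; your reduction of the hypothesis on $\varphi$ to a non-decreasing $\psi(n)=\varphi(J_n)$ via the natural ordering is also how the second clause is meant to be read. The single point of divergence is strong absoluteness, which the paper does not prove from scratch but imports from \cite{AlbiacAnsorena2022c}*{Proposition 7.5} combined with the democracy estimate \eqref{eq:demlp}, whereas you attempt a self-contained proof. Your target inequality $\norm{\gamma}_1\le A\,\norm{\gamma}_\infty^{1-p}\,\tau_{p,q}(\gamma)^p$ is in fact true, and your optimization step correctly converts it into strong absoluteness; both your argument and the paper's citation require $p<1$, and the theorem's stated range $0<p<\infty$ must in any case be read as $0<p<1$, since Section~\ref{sect:HVLUTAP} itself exhibits the failure of the conclusion for $\Ts_{1,2}(\ell_2)$.

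However, your sketch of the interpolation inequality has a genuine gap, concentrated exactly where you flag the ``delicate point.'' First, the two-sided claim that the inner sum in $\tau_{p,q}$ is governed by its largest term is false: taking $\gamma_J=\abs{J}^{1/p}$ for all $J$ in generations $0,\dots,k$, the chain values $b_j(t):=\abs{\gamma_{J_j(t)}}\,2^{j/p}$ (where $J_j(t)\in\Dt_j$ contains $t$) are identically $1$, so $\tau_{p,q}(\gamma)=(k+1)^{1/q}$ while the max-term functional equals $1$; only the trivial direction $\tau_{p,q}\ge\tau_{p,\infty}$ is available. Second, the generation-by-generation reduction cannot close as proposed: summing the single-generation $\ell_p$ bounds yields $\norm{\gamma}_1\le\norm{\gamma}_\infty^{1-p}\norm{\gamma}_{\ell_p}^p$, and the same example gives $\norm{\gamma}_{\ell_p}^p=k+1$ against $\tau_{p,q}(\gamma)^p=(k+1)^{p/q}$, so $\TL_{p,q}$ does not embed into $\ell_p$ when $q>p$ and the pointwise losses genuinely do not integrate to a constant along this route. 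The inequality can nevertheless be rescued by a critical-generation split rather than a largest-term domination. Normalize $\norm{\gamma}_\infty\le 1$, so $b_j(t)\le 2^{j/p}$, set $S(t)=\sup_j b_j(t)$, and note $\norm{\gamma}_1=\int_I\sum_{j\ge 0}b_j(t)\,2^{-j(1/p-1)}\,dt$. Choosing $j_0$ minimal with $2^{j_0}\ge S^p(t)$, the terms with $j<j_0$ are at most $2^j$ (sum $<2S^p$), while for $j\ge j_0$ one uses $b_j\le S$ and $2^{-j_0(1/p-1)}\le S^{p-1}$ to bound the geometric tail by a constant times $S^p$; hence
\[
\sum_{j\ge 0} b_j(t)\,2^{-j(1/p-1)}\le A_p\, S^p(t),
\]
and integrating gives $\norm{\gamma}_1\le A_p\,\tau_{p,\infty}(\gamma)^p\le A_p\,\tau_{p,q}(\gamma)^p$, which is your inequality after rescaling. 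Alternatively---and this is essentially what the paper's citation packages---unconditionality together with the lower bound in \eqref{eq:demlp} yields the weak-$\ell_p$ estimate $\abs{\gamma}_{(m)}\lesssim m^{-1/p}\norm{\gamma}$ for the decreasing rearrangement, and splitting $\sum_m\abs{\gamma}_{(m)}$ at $m_0\approx\enpar{\norm{\gamma}/\norm{\gamma}_\infty}^p$ gives the same bound, again using $p<1$ for the tail. With either repair your proposal becomes a complete proof following the paper's route, with the outsourced lemma proved rather than cited.
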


\begin{proof}
The lattice $\Ts_{p,2}$ is $p$-convex, whence $L$-convex. Besides, it is strongly absolute by \cite{AlbiacAnsorena2022c}*{Proposition 7.5} and \eqref{eq:demlp}. Combining these results with Proposition~\ref{prop:TLSS} and Corollary~\ref{cor:main} gives the desired conclusion.
\end{proof}

\begin{proof}[Proof of Theorem~\ref{thm:main}]
Just combine Theorem~\ref{thm:maingeneral} with Theorem~\ref{thm:HpDTL}.
\end{proof}

We remark that $\Ts_{p,q}(\ell_1)$ and $\Ts_{p,q}(c_0)$, $0<p<1$, $0<q\le \infty$, also have a unique unconditional basis by \cite{AlbiacAnsorena2022c}*{Theorem 4.2} (we replace the spaces with their separable parts if $q=\infty$). If $q=2$ it is fair to call these spaces Hardy spaces. However, their connection with other classes of Hardy spaces is unclear.

\begin{bibdiv}
\begin{biblist}

\bib{AA2013}{article}{
author={Albiac, Fernando},
author={Ansorena, Jos\'{e}~L.},
title={Integration in quasi-{B}anach spaces and the fundamental theorem
of calculus},
date={2013},
ISSN={0022-1236},
journal={J. Funct. Anal.},
volume={264},
number={9},
pages={2059\ndash 2076},
url={https://doi.org/10.1016/j.jfa.2013.02.003},
review={\MR{3029146}},
}

\bib{AlbiacAnsorena2022b}{article}{
author={Albiac, Fernando},
author={Ansorena, Jos\'{e}~L.},
title={On the permutative equivalence of squares of unconditional
bases},
date={2022},
ISSN={0001-8708},
journal={Adv. Math.},
volume={410},
pages={Paper No. 108695, 26},
url={https://doi-org/10.1016/j.aim.2022.108695},
review={\MR{4487973}},
}

\bib{AlbiacAnsorena2022c}{article}{
author={Albiac, Fernando},
author={Ansorena, Jos{\'e}~L.},
title={Uniqueness of unconditional basis of infinite direct sums of
quasi-{B}anach spaces},
date={2022},
ISSN={1385-1292},
journal={Positivity},
volume={26},
number={2},
pages={Paper No. 35, 43},
url={https://doi-org/10.1007/s11117-022-00905-1},
review={\MR{4400173}},
}

\bib{AlbiacAnsorena2023}{article}{
author={Albiac, Fernando},
author={Ansorena, Jos\'{e}~L.},
title={The structure of greedy-type bases in {T}sirelson's space and its
convexifications},
date={2023},
journal={Annali della Scuola Normale Superiore di Pisa, Classe di
Scienze},
eprint={https://doi.org/10.2422/2036-2145.202202_001},
url={https://doi.org/10.2422/2036-2145.202202_001},
}

\bib{AABW2021}{article}{
author={Albiac, Fernando},
author={Ansorena, Jos\'{e}~L.},
author={Bern\'{a}, Pablo~M.},
author={Wojtaszczyk, Przemys{\l}aw},
title={Greedy approximation for biorthogonal systems in quasi-{B}anach
spaces},
date={2021},
journal={Dissertationes Math. (Rozprawy Mat.)},
volume={560},
pages={1\ndash 88},
}

\bib{AACD2018}{article}{
author={Albiac, Fernando},
author={Ansorena, Jos\'{e}~L.},
author={C\'{u}th, Marek},
author={Doucha, Michal},
title={Lipschitz free {$p$}-spaces for {$0 < p < 1$}},
date={2020},
ISSN={0021-2172},
journal={Israel J. Math.},
volume={240},
number={1},
pages={65\ndash 98},
url={https://doi-org/10.1007/s11856-020-2061-5},
review={\MR{4193127}},
}

\bib{AAW2021c}{article}{
author={Albiac, Fernando},
author={Ansorena, Jos\'{e}~L.},
author={Wojtaszczyk, Przemys\l~aw},
title={On a `philosophical' question about {B}anach envelopes},
date={2021},
ISSN={1139-1138},
journal={Rev. Mat. Complut.},
volume={34},
number={3},
pages={747\ndash 759},
url={https://doi-org/10.1007/s13163-020-00374-8},
review={\MR{4302240}},
}

\bib{AAW2022}{article}{
author={Albiac, Fernando},
author={Ansorena, Jos\'{e}~L.},
author={Wojtaszczyk, Przemys{\l}aw},
title={Uniqueness of unconditional basis of {$H_p(\mathbb{T}) \oplus
\ell_2$} and {$H_p(\mathbb{T}) \oplus\mathcal{T}^{(2)}$} for {$0 < p < 1$}},
date={2022},
ISSN={0022-1236},
journal={J. Funct. Anal.},
volume={283},
number={7},
pages={Paper No. 109597, 24},
url={https://doi-org/10.1016/j.jfa.2022.109597},
review={\MR{4447769}},
}

\bib{AlbiacKalton2016}{book}{
author={Albiac, Fernando},
author={Kalton, Nigel~J.},
title={Topics in {B}anach space theory},
edition={Second Edition},
series={Graduate Texts in Mathematics},
publisher={Springer, [Cham]},
date={2016},
volume={233},
ISBN={978-3-319-31555-3; 978-3-319-31557-7},
url={https://doi.org/10.1007/978-3-319-31557-7},
note={With a foreword by Gilles Godefroy},
review={\MR{3526021}},
}

\bib{AKL2004}{article}{
author={Albiac, Fernando},
author={Kalton, Nigel~J.},
author={Ler\'{a}noz, Camino},
title={Uniqueness of the unconditional basis of {$l_1(l_p)$} and
{$l_p(l_1)$}, {$0<p<1$}},
date={2004},
ISSN={1385-1292},
journal={Positivity},
volume={8},
number={4},
pages={443\ndash 454},
url={https://doi-org/10.1007/s11117-003-8542-z},
review={\MR{2117671}},
}

\bib{AlbiacLeranoz2002}{article}{
author={Albiac, Fernando},
author={Ler\'{a}noz, Camino},
title={Uniqueness of unconditional basis of {$l_p(c_0)$} and
{$l_p(l_2),\ 0<p<1$}},
date={2002},
ISSN={0039-3223},
journal={Studia Math.},
volume={150},
number={1},
pages={35\ndash 52},
url={https://doi.org/10.4064/sm150-1-4},
review={\MR{1893423}},
}

\bib{AnsorenaBello2022}{article}{
author={Ansorena, Jos\'{e}~L.},
author={Bello, Glenier},
title={Toward an optimal theory of integration for functions taking
values in quasi-{B}anach spaces},
date={2022},
ISSN={1578-7303},
journal={Rev. R. Acad. Cienc. Exactas F\'{\i}s. Nat. Ser. A Mat. RACSAM},
volume={116},
number={2},
pages={Paper No. 85, 38},
url={https://doi-org/10.1007/s13398-022-01230-8},
review={\MR{4396837}},
}

\bib{Aoki1942}{article}{
author={Aoki, Tosio},
title={Locally bounded linear topological spaces},
date={1942},
ISSN={0369-9846},
journal={Proc. Imp. Acad. Tokyo},
volume={18},
pages={588\ndash 594},
url={http://projecteuclid.org/euclid.pja/1195573733},
review={\MR{14182}},
}

\bib{BenLin2000}{book}{
author={Benyamini, Y.},
author={Lindenstrauss, Joram},
title={Geometric nonlinear functional analysis. {V}ol. 1},
series={American Mathematical Society Colloquium Publications},
publisher={American Mathematical Society, Providence, RI},
date={2000},
volume={48},
ISBN={0-8218-0835-4},
review={\MR{1727673}},
}

\bib{Blasco1988}{article}{
author={Blasco, Oscar},
title={Boundary values of functions in vector-valued {H}ardy spaces and
geometry on {B}anach spaces},
date={1988},
ISSN={0022-1236},
journal={J. Funct. Anal.},
volume={78},
number={2},
pages={346\ndash 364},
url={https://doi.org/10.1016/0022-1236(88)90123-1},
review={\MR{943502}},
}

\bib{Blasco1990}{incollection}{
author={Blasco, Oscar},
title={Spaces of vector valued analytic functions and applications},
date={1990},
booktitle={Geometry of {B}anach spaces ({S}trobl, 1989)},
series={London Math. Soc. Lecture Note Ser.},
volume={158},
publisher={Cambridge Univ. Press, Cambridge},
pages={33\ndash 48},
review={\MR{1110184}},
}

\bib{BGC1987}{article}{
author={Blasco, Oscar},
author={Garc\'{\i}a-Cuerva, Jos\'{e}},
title={Hardy classes of {B}anach-space-valued distributions},
date={1987},
ISSN={0025-584X,1522-2616},
journal={Math. Nachr.},
volume={132},
pages={57\ndash 65},
url={https://doi.org/10.1002/mana.19871320105},
review={\MR{910043}},
}

\bib{Bourgain1982}{article}{
author={Bourgain, J.},
title={The nonisomorphism of {$H\sp{1}$}-spaces in one and several
variables},
date={1982},
ISSN={0022-1236},
journal={J. Functional Analysis},
volume={46},
number={1},
pages={45\ndash 57},
url={https://doi.org/10.1016/0022-1236(82)90043-X},
review={\MR{654464}},
}

\bib{BCLT1985}{article}{
author={Bourgain, Jean},
author={Casazza, Peter~G.},
author={Lindenstrauss, Joram},
author={Tzafriri, Lior},
title={Banach spaces with a unique unconditional basis, up to
permutation},
date={1985},
ISSN={0065-9266},
journal={Mem. Amer. Math. Soc.},
volume={54},
number={322},
pages={iv+111},
url={https://doi-org/10.1090/memo/0322},
review={\MR{782647}},
}

\bib{BR1980}{article}{
author={Bourgain, Jean},
author={Rosenthal, Haskell~P.},
title={Martingales valued in certain subspaces of {$L\sp{1}$}},
date={1980},
ISSN={0021-2172},
journal={Israel J. Math.},
volume={37},
number={1-2},
pages={54\ndash 75},
url={https://doi.org/10.1007/BF02762868},
review={\MR{599302}},
}

\bib{BG1970}{article}{
author={Burkholder, D.~L.},
author={Gundy, R.~F.},
title={Extrapolation and interpolation of quasi-linear operators on
martingales},
date={1970},
ISSN={0001-5962,1871-2509},
journal={Acta Math.},
volume={124},
pages={249\ndash 304},
url={https://doi.org/10.1007/BF02394573},
review={\MR{440695}},
}

\bib{CZ1952}{article}{
author={Calder\'{o}n, A.~P.},
author={Zygmund, A.},
title={On the existence of certain singular integrals},
date={1952},
ISSN={0001-5962,1871-2509},
journal={Acta Math.},
volume={88},
pages={85\ndash 139},
url={https://doi.org/10.1007/BF02392130},
review={\MR{52553}},
}

\bib{Carleson1980}{article}{
author={Carleson, Lennart},
title={An explicit unconditional basis in {$H\sp{1}$}},
date={1980},
ISSN={0007-4497},
journal={Bull. Sci. Math. (2)},
volume={104},
number={4},
pages={405\ndash 416},
review={\MR{602408}},
}

\bib{CasKal1999}{article}{
author={Casazza, Peter~G.},
author={Kalton, Nigel~J.},
title={Uniqueness of unconditional bases in {$c_0$}-products},
date={1999},
ISSN={0039-3223},
journal={Studia Math.},
volume={133},
number={3},
pages={275\ndash 294},
review={\MR{1687211}},
}

\bib{CW1977}{article}{
author={Coifman, Ronald~R.},
author={Weiss, Guido},
title={Extensions of {H}ardy spaces and their use in analysis},
date={1977},
ISSN={0002-9904},
journal={Bull. Amer. Math. Soc.},
volume={83},
number={4},
pages={569\ndash 645},
url={https://doi.org/10.1090/S0002-9904-1977-14325-5},
review={\MR{447954}},
}

\bib{DHK2006}{article}{
author={Dilworth, Stephen~J.},
author={Hoffmann, Mark},
author={Kutzarova, Denka},
title={Non-equivalent greedy and almost greedy bases in {$l_p$}},
date={2006},
ISSN={0972-6802},
journal={J. Funct. Spaces Appl.},
volume={4},
number={1},
pages={25\ndash 42},
url={https://doi-org/10.1155/2006/368648},
review={\MR{2194634}},
}

\bib{EdWo1976}{article}{
author={Edelstein, I.~S.},
author={Wojtaszczyk, Przemys{\l}aw},
title={On projections and unconditional bases in direct sums of {B}anach
spaces},
date={1976},
ISSN={0039-3223},
journal={Studia Math.},
volume={56},
number={3},
pages={263\ndash 276},
url={https://doi-org/10.4064/sm-56-3-263-276},
review={\MR{425585}},
}

\bib{GCRF1985}{book}{
author={Garc\'{\i}a-Cuerva, Jos\'{e}},
author={Rubio~de Francia, Jos\'{e}~L.},
title={Weighted norm inequalities and related topics},
series={North-Holland Mathematics Studies},
publisher={North-Holland Publishing Co., Amsterdam},
date={1985},
volume={116},
ISBN={0-444-87804-1},
note={Notas de Matem\'{a}tica, 104. [Mathematical Notes]},
review={\MR{807149}},
}

\bib{Kalton1984b}{article}{
author={Kalton, Nigel~J.},
title={Convexity conditions for nonlocally convex lattices},
date={1984},
ISSN={0017-0895},
journal={Glasgow Math. J.},
volume={25},
number={2},
pages={141\ndash 152},
url={https://doi-org/10.1017/S0017089500005553},
review={\MR{752808}},
}

\bib{KLW1990}{article}{
author={Kalton, Nigel~J.},
author={Ler\'{a}noz, Camino},
author={Wojtaszczyk, Przemys{\l}aw},
title={Uniqueness of unconditional bases in quasi-{B}anach spaces with
applications to {H}ardy spaces},
date={1990},
ISSN={0021-2172},
journal={Israel J. Math.},
volume={72},
number={3},
pages={299\ndash 311 (1991)},
url={https://doi.org/10.1007/BF02773786},
review={\MR{1120223}},
}

\bib{Kinchine1923}{article}{
author={Khintchine, Aleksandr~Y.},
title={{\"U}ber dyadische {B}r{\"u}che},
date={1923},
ISSN={0025-5874},
journal={Math. Z.},
volume={18},
number={1},
pages={109\ndash 116},
url={https://doi-org/10.1007/BF01192399},
review={\MR{1544623}},
}

\bib{Leranoz1992}{article}{
author={Ler\'{a}noz, Camino},
title={Uniqueness of unconditional bases of {$c_0(l_p),\;0<p<1$}},
date={1992},
ISSN={0039-3223},
journal={Studia Math.},
volume={102},
number={3},
pages={193\ndash 207},
review={\MR{1170550}},
}

\bib{LinTza1979}{book}{
author={Lindenstrauss, Joram},
author={Tzafriri, Lior},
title={Classical {B}anach spaces. {II} -- function spaces},
series={Ergebnisse der Mathematik und ihrer Grenzgebiete [Results in
Mathematics and Related Areas]},
publisher={Springer-Verlag, Berlin-New York},
date={1979},
volume={97},
ISBN={3-540-08888-1},
review={\MR{540367}},
}

\bib{Maurey1974}{incollection}{
author={Maurey, Bernard},
title={Type et cotype dans les espaces munis de structures locales
inconditionnelles},
date={1974},
booktitle={S\'{e}minaire {M}aurey-{S}chwartz 1973--1974: {E}spaces
{$L^{p}$}, applications radonifiantes et g\'{e}om\'{e}trie des espaces de
{B}anach, {E}xp. {N}os. 24 et 25},
pages={25},
review={\MR{0399796}},
}

\bib{Maurey1980}{article}{
author={Maurey, Bernard},
title={Isomorphismes entre espaces {$H_{1}$}},
date={1980},
ISSN={0001-5962},
journal={Acta Math.},
volume={145},
number={1-2},
pages={79\ndash 120},
url={https://doi.org/10.1007/BF02414186},
review={\MR{586594}},
}

\bib{MN2002}{article}{
author={Michalak, Artur},
author={Nawrocki, Marek},
title={Banach envelopes of vector valued {$H^p$} spaces},
date={2002},
ISSN={0019-3577,1872-6100},
journal={Indag. Math. (N.S.)},
volume={13},
number={2},
pages={185\ndash 195},
url={https://doi.org/10.1016/S0019-3577(02)80004-0},
review={\MR{2016337}},
}

\bib{Rolewicz1957}{article}{
author={Rolewicz, Stefan},
title={On a certain class of linear metric spaces},
date={1957},
journal={Bull. Acad. Polon. Sci. Cl. III.},
volume={5},
pages={471\ndash 473, XL},
review={\MR{0088682}},
}

\bib{Tozoni1995}{article}{
author={Tozoni, Sergio~Antonio},
title={Weighted inequalities for vector operators on martingales},
date={1995},
ISSN={0022-247X,1096-0813},
journal={J. Math. Anal. Appl.},
volume={191},
number={2},
pages={229\ndash 249},
}

\bib{Woj1984}{article}{
author={Wojtaszczyk, Przemys{\l}aw},
title={{$H_{p}$}-spaces, {$p\leq 1$}, and spline systems},
date={1984},
ISSN={0039-3223},
journal={Studia Math.},
volume={77},
number={3},
pages={289\ndash 320},
url={https://doi-org/10.4064/sm-77-3-289-320},
review={\MR{745285}},
}

\bib{Wojtowicz1988}{article}{
author={W\'{o}jtowicz, Marek},
title={On the permutative equivalence of unconditional bases in
{$F$}-spaces},
date={1988},
ISSN={0208-6573},
journal={Funct. Approx. Comment. Math.},
volume={16},
pages={51\ndash 54},
review={\MR{965366}},
}

\end{biblist}
\end{bibdiv}
\end{document}